\documentclass[a4paper,reqno,11pt]{amsart}

\usepackage[utf8]{inputenc}
\usepackage[T1]{fontenc}
\usepackage{lmodern} 
\usepackage{graphicx}
\usepackage[english]{babel}
\usepackage{amsmath,amsfonts,amssymb,amscd,amsthm, mathtools}
\usepackage{hyperref}
\hypersetup{colorlinks = true}
\hypersetup{linktocpage}
\usepackage[all]{xy}
\usepackage{multibib}
\usepackage{cleveref}
\usepackage[shortlabels]{enumitem}
\usepackage{comment}
\usepackage{bbm}
\usepackage{color}
\usepackage{dsfont}

\usepackage{geometry}
\newtheorem{theorem}{Theorem}[section]
\newtheorem{proposition}{Proposition}[section]
\newtheorem{lemma}{Lemma}[section]
\newtheorem{corollary}{Corollary}[section]
\newtheorem{remark}{Remark}[section]

\theoremstyle{definition}
\newtheorem{definition}{Definition}[section]

\newcommand{\C}{\mathbb C}
\newcommand{\R}{\mathbb R}

\newcommand{\Z}{\mathbb Z}

\newcommand{\N}{\mathbb N}

\newcommand{\D}{\mathcal D}

\newcommand{\E}{\mathbb E}

\renewcommand{\Re}{\mathrm{Re}}
\renewcommand{\Im}{\mathrm{Im}}

\DeclareMathOperator{\Tr}{Tr}
\newcommand{\an}[1]{\langle #1 \rangle}
\newcommand{\la}{\langle}
\newcommand{\ra}{\rangle}

\newcommand{\commentU}[1]{\textcolor{blue}{(#1)}}

\newcommand{\green}[1]{\textcolor{green}{(#1)}}

\title[Strichartz Estimates and Scattering for the Massless Reduced DF Equation]{Strichartz Estimates and Scattering for the Massless Reduced Dirac--Fock Equation}
\author[Elena Danesi]{Elena Danesi}
\address{Politecnico di Torino, Dipartimento di Scienze Matematiche, Corso Duca degli Abruzzi 24, 10129 Turin, Italy}
\email{\href{mailto:elena.danesi@polito.it}{\nolinkurl{elena.danesi@polito.it}}}
\author[Anne-Sophie de Suzzoni]{Anne-Sophie de Suzzoni}
\address{Université Évry Paris-Saclay, LaMME UMR 8071, 23 Boulevard de France, 91037 Évry Cedex, France \& Institut Universitaire de France (IUF), 75005 Paris, France}
\email{\href{mailto:anne-sophie.desuzzoni@univ-evry.fr}{\nolinkurl{anne-sophie.desuzzoni@univ-evry.fr}}}
\author[Umberto Morellini]{Umberto Morellini}
\address{Università di Pisa, Dipartimento di Matematica, Largo Bruno Pontecorvo 5, 56127 Pisa, Italy}
\email{\href{mailto:umberto.morellini@dm.unipi.it}{\nolinkurl{umberto.morellini@dm.unipi.it}}}
\date{16 October 2026}

\begin{document}

\begin{abstract}
    In this paper, we establish Strichartz estimates for the reduced Dirac--Fock equation for 3D massless spinors, linearized around non-localized equilibria. Our analysis covers both the random field and the density matrix frameworks. As an application, we deduce scattering for the corresponding nonlinear problem with a high-frequency cut-off.
\end{abstract}

\maketitle

\tableofcontents

\section{Introduction}

We consider the time-dependent Dirac--Fock equation, describing the effective dynamics of a system of infinitely many relativistic fermions in the mean-field regime. In the standard setting for density matrices, the \emph{reduced} Dirac--Fock equation in $\R^3$ reads
\begin{equation}\label{eq:realDF}
i\partial_t \gamma = [\D + w*\rho_\gamma, \gamma]\,,
\end{equation}
where $\gamma \colon [0,T]  \to \mathcal L (L^2(\R^3,\C^4))$ is a self-adjoint integral operator, $\rho_\gamma$ denotes the $\C^4$--trace of the diagonal of the integral kernel of~\(\gamma\) and $w$ is an even pairwise interaction potential.

In this article, we shall address the case of particles without mass. For this reason, $\D$ is here the massless free Dirac operator acting on $4-$dimensional spinors and defined as
\[
\D = -i\boldsymbol\alpha\cdot \nabla=-i \sum_{j=1}^3 \alpha_j \partial_j\,.
\]
Above, one has $\boldsymbol\alpha=(\alpha_1,\alpha_2,\alpha_3)$ with
\[
\alpha_j = \begin{pmatrix}
    0 & \sigma_j \\ \sigma_j & 0
\end{pmatrix}\,,\quad j=1,2,3\,,
\]
where
\[
\sigma_1=\begin{pmatrix}
0 & 1\\
1 & 0
\end{pmatrix},\quad
\sigma_2=\begin{pmatrix}
0 & -i\\
i & 0
\end{pmatrix}\quad\text{and}\quad
\sigma_3=\begin{pmatrix}
1 & 0\\
0 & -1
\end{pmatrix}
\]
are the standard Pauli matrices. We recall that $\alpha_j$, for $j=1,2,3$, is a Hermitian matrix, i.e. \(\alpha_j^* = \alpha_j\). The operator~\(\D\) is a Fourier multiplier by
\[
\boldsymbol\alpha \cdot \xi = \sum_{j=1}^3 \alpha_j \xi_j\,.
\]
Alternatively, we also say that its symbol is~\(\boldsymbol\alpha \cdot \xi \).

\Cref{eq:realDF} admits steady states, notably Fourier multipliers which in addition commute with the Dirac operator~\(\D\). In this paper, we are interested in the linear stability of these stationary states. This question is analogous to studying the linear stability of certain equilibria in the equivalent formulation of~\eqref{eq:realDF} for random fields, which reads 
\begin{equation}
    \label{eq:DF}
    i \partial_t X = \D X + \Big (w \ast \E [\vert X \vert^2] \Big ) X\,,
\end{equation}
where $X \colon [0,T] \times \R^3 \times \Omega \to \C^4$ is a spinor random field defined over a probability space~$(\Omega, \mathcal A, \mathbb P)$, where~$\E$ denotes the expectation on $\Omega$ and $|X|^2=\la X,X\ra_{\C^4}$ with $\la X,Y \ra_{\C^4}=\sum_{j=1}^4 \overline{X}_j Y_j$. This analogy is explained in~\Cref{subsec:reduction}.

The equivalence between Equations~\eqref{eq:realDF}~and~\eqref{eq:DF} can be seen as follows: on the one hand, given a solution $X$ of~\eqref{eq:DF}, if $\gamma$ is the integral operator with kernel $\gamma(x,y)=\E[\vert X(x)\ra \la X(y)\vert]$ (with a slight abuse of notation, $\gamma$ denotes both the operator and its integral kernel), then $\gamma$ is a solution to~\eqref{eq:realDF}. On the other hand, for any non-negative self-adjoint operator $\gamma = \sum_{i=1}^N |\varphi_i\rangle\langle\varphi_i|$ of finite rank that solves~\eqref{eq:realDF}, the random field defined by $X = \sum_{i=1}^N g_i \varphi_i$ satisfies~\eqref{eq:DF}, with~$g_1, \dots, g_N$ being independent standard Gaussian variables. By applying the spectral theorem, this correspondence straightforwardly generalizes to operators of infinite rank.

While we do not rely exclusively on the random-field approach in this work, this formulation provides a convenient framework that allows for useful analogies with the nonlinear Dirac equation. Indeed, the probability parameter $\omega \in \Omega$ can be viewed as an index coupling this system of (potentially infinitely many) Hartree-Dirac-type equations, given by
\begin{equation*}
i\partial_t X(\omega) = \mathcal{D} X(\omega) + \left( w * \int_{\Omega} |X(\omega')|^2 d\mathbb{P}(\omega') \right) X(\omega)\,.
\end{equation*}
This representation highlights that~\Cref{eq:DF} is not a deterministic PDE to be solved for random initial data, but rather a system of equations coupled through the expectation
\[
\mathbb{E}[|X|^2] = \int_{\Omega} |X(\omega')|^2 d\mathbb{P}(\omega').
\]
Furthermore, this setting is particularly suitable for describing the equilibria, as will be discussed in the following section.

The Dirac-Fock equation has been extensively studied in the time-independent setting. The existence of critical points of the corresponding energy functional was established in \cite{EstSer-1999-CMP, Pat-2000-AHP}. We refer here to critical points rather than minimizers due to the unboundedness from below of the energy functional, a consequence of the negative spectrum of the Dirac operator. This difficulty was addressed in \cite{EstSer-2001-AHP} by analyzing the nonrelativistic limit, which yielded a well-defined electronic ground state energy. More recently, a new definition of the Dirac-Fock ground state was introduced in \cite{Ser-2023-CMP}, a framework that has since been extended to a relativistic model for crystals \cite{CatMenPatSer-2024-ARMA}. In the time-dependent framework, the theory is far less developed. Global-in-time existence for a generalized Dirac-Fock evolution arising from the Bogoliubov-Dirac-Fock model was proved in \cite{HaiLewSpa-2005-LMP}, and recent works have extended this well-posedness result to relativistic electron-nucleus dynamics \cite{Mor-2025-JDE} and to a two-dimensional massless analog relevant for graphene \cite{BM25}.

Despite its limited physical interest, the case of 3D fermions without mass poses nontrivial mathematical challenges and serves as a natural first step for exploring more realistic relativistic systems, ranging from massive fermions in 3D (such as electrons in heavy atoms) to massless 2D fermions in graphene. Such investigations will be the subject of future works.

\subsection{Equilibria}
We endow~\(\mathcal M_{4}(\C)\) with its canonical Hilbert structure. Let \(f\in L^2\left(\R^3,\mathcal M_{4}(\C)\right)\). We denote~\(\gamma_f\) the Fourier multiplier by~\(f(\xi)f(\xi)^*\) and define
\begin{equation}
\label{eq:Yf}
Y_f = \int_{\R^3} e^{-it(\boldsymbol\alpha \cdot \xi + \widetilde m)} e^{ix \cdot \xi} f(\xi) dW(\xi)\,,
\end{equation}
where 
\begin{itemize}
    \item $\widetilde m = \int_{\R^3} w\, dx \, \int_{\R^3} \Tr \big (f^* (\xi) f (\xi) \big) \, d\xi\,$;
    \item $e^{i \boldsymbol\alpha \cdot \xi}$ denotes the exponential matrix of $i\boldsymbol\alpha \cdot \xi \,$; 
    \item $dW(\xi)$ is a vector of infinitesimal centered Gaussian variables characterized by
\begin{equation}
    \label{formula_dW}
    \E \Big ( \vert dW(\eta) \ra \la dW(\xi) \vert \Big ) = \mathbbm 1_{\C^4}\delta (\eta-   \xi) d\eta d\xi\, ,
\end{equation}
the bra-ket notation being in~\(\C^4\).
\end{itemize}
The random field $Y_f$ is a solution of \eqref{eq:DF}. Moreover, if we assume that~\(f(\xi)f(\xi)^*\) commutes with~\(\boldsymbol\alpha\cdot \xi\) for all~\(\xi \in \R^3\), then the covariance of~\(Y_f(t)\) does not depend on~\(t\) and is equal to~\(\gamma_f\). Therefore, the law of~\(Y_f(t)\) does not depend on time and for this reason we say that it is an \emph{equilibrium} of~\eqref{eq:DF}. Equivalently, under the above assumption,~\(\gamma_f\) is a steady state of~\eqref{eq:realDF}. We give a proof of these statements in~\Cref{subsec:studyEq}.\par

In the rest of the paper, we assume that~\(f(\xi)f(\xi)^*\) commutes with~\(\boldsymbol\alpha\cdot\xi\) for all~\(\xi\in \R^3\), and that~\(\xi \mapsto f(\xi)f(\xi)^*\) is even.

The aim of this work is to study the linear stability of~\(Y_f\) under the flow of~\eqref{eq:DF}.

\subsection{Linearized Equation}
We now fix an equilibrium $Y$, dropping the subscript $f$ to lighten the notation. As it may be seen from its covariance operator, $Y$ is non-localized and therefore is not in any Sobolev space. For this reason, the study of the stability of~\(Y\) requires us to focus on the perturbation of the full solution around $Y$, namely $Z=X-Y$. In other words, we consider the following Cauchy problem associated to~\eqref{eq:DF}, that is
\begin{equation}
    \begin{cases}\label{eq:X-cauchy-problem}
        i\partial_t X=\D X+\left(w\ast\E\left[|X|^2\right]\right) X\,,\\
        X_0 = X\big|_{t=0}=Y\big|_{t=0}+Z_0\,,
    \end{cases}
\end{equation}
where the solution will have the form $X=Y+Z$ and the random field $Z$ satisfies
\begin{equation}
    i \partial_t Z = (\D + \widetilde m) Z + \Big (  w \ast \E \big (2 \Re \la Y,Z \ra_{\C^4} + \vert Z \vert^2\big  )\Big )( Y+Z)\,.
\end{equation}\par

Equivalently, the operator~\(\gamma_f\), as a Fourier multiplier, is not trace-class and we consider an equation on the unknown~\(Q = \gamma-\gamma_f\) given by
\[
i\partial_t Q = [\D, Q] + [w*\rho_Q, \gamma_f] + [w*\rho_Q,Q]\,.
\]\par

We focus on the linearized equations around~\(Y\) or~\(\gamma_f\), that is respectively
\begin{align}\label{eq:eqlinZ}
i \partial_t Z = & (\D + \widetilde m) Z + \Big (  w \ast \E \big (2 \Re \la Y,Z \ra_{\C^4} \big  )\Big )Y \,,\\ \label{eq:eqlinQ}
i\partial_t Q = & [\D, Q] + [w*\rho_Q, \gamma_f]\,.
\end{align}
We denote their flows as~\(S_Z\) and~\(S_Q\), respectively.

\subsection{Main Results}

We prove the following Strichartz estimates.

\begin{theorem}\label{th:main}
Assume~\(\an{\,\cdot\,}^{8} \widehat w \in L^\infty(\R^3) \).
Let~\(p,q\in[2,+\infty]\) and \(s\in\R\) be such that
\begin{equation*}
    p>2,\quad \frac1p + \frac1q = \frac12\quad\text{and}\quad s = \frac32 - \frac3q - \frac1p\,.
\end{equation*}
Then, there exists $\varepsilon_0 >0$ such that, for any~\(f\in C^3\left(\R^3,\mathcal M_{4}(\C)\right)\) satisfying
\[
\sum_{k = 0}^3 \|\an{\,\cdot\,}^8 \nabla^{\otimes k} f\|_{L^\infty(\R^3)} < \varepsilon_0\,,
\]
the flow~\(S_Z\) is continuous from~\(L^2_\omega(\dot H^{s-2}_x\cap H^s_x)\) to~\(L^2_\omega L^p_t L^q_x\) and~\(Q_0 \mapsto \rho_{S_Q(t)(Q_0)}\) is continuous from~\(\mathfrak S^{4/3}(H^{1/2}_x) 
\) to~\(L^2_{t,x} 
\) 
\end{theorem}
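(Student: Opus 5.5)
We treat the linearized flows as perturbations of the free massless Dirac flow, with a coupling that closes on the density alone. For $S_Z$, Duhamel's formula for \eqref{eq:eqlinZ} reads
\[
Z(t) = e^{-it(\D+\widetilde m)}Z_0 - i\int_0^t e^{-i(t-\tau)(\D+\widetilde m)}\Big(\big(w*V(\tau)\big)Y(\tau)\Big)\,d\tau,
\]
where $V := \E\big(2\Re\la Y,Z\ra_{\C^4}\big)$ is a real, deterministic scalar field. Consequently $Z$ is completely determined by $Z_0$ and $V$. Taking $\E(2\Re\la Y(t),\cdot\ra)$ of the Duhamel formula gives a closed linear equation
\begin{equation*}
V = V_{\mathrm{free}} + \mathcal L V,
\qquad
V_{\mathrm{free}}(t) = \E\big(2\Re\la Y(t), e^{-it(\D+\widetilde m)}Z_0\ra\big).
\end{equation*}
Using \eqref{formula_dW}, the explicit form of $Y$ and the commutation of $ff^*$ with $\boldsymbol\alpha\cdot\xi$, $\mathcal L$ is a space-time convolution, $\mathcal L V = K * (w*V)$, where the kernel $K(t,x)$ is built from $\mathbbm 1_{t\ge 0}\,\Im\Tr\big(e^{-it\boldsymbol\alpha\cdot\xi}\,ff^*\,\ldots\big)$. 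In Fourier variables in $(t,x)$ this is a linear response (Lindhard-type) function $\widehat K(\tau,k)$, and $\mathcal L$ acts as multiplication by $\widehat K(\tau,k)\widehat w(k)$. The same reduction applies to \eqref{eq:eqlinQ}: $\rho_Q = \rho_{\mathrm{free}} + K*(w*\rho_Q)$ with the same kernel, where $\rho_{\mathrm{free}}(t) = \rho_{e^{-it\D}Q_0e^{it\D}}$. So both statements reduce to three steps.

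\textbf{Step 1 (free estimates).} For $S_Z$ we use the standard Strichartz estimates for the massless Dirac (wave-type) propagator. The admissibility condition $\frac1p+\frac1q=\frac12$ with $p>2$ is exactly the non-endpoint wave-admissible range, and the scaling gives $s=\frac32-\frac3q-\frac1p$. We apply them $\omega$-wise and then take $L^2_\omega$.

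For $Q$ we need an orthonormal Strichartz bound for densities,
\[
\|\rho_{e^{-it\D}Q_0e^{it\D}}\|_{L^2_{t,x}} \lesssim \|Q_0\|_{\mathfrak S^{4/3}(H^{1/2})},
\]
in the spirit of Frank--Sabin for the wave equation. We prove it by duality: it is equivalent to a Schatten $\mathfrak S^4$ bound on $W\,e^{-it\D}\,|\D|^{-1/2}$ composed with its adjoint, for $W\in L^2_{t,x}$. This in turn follows from a Stein-type complex interpolation of $\la\D\ra^{-1/2}e^{-it\D}$ between its dispersive $L^1\to L^\infty$ bound and the trivial $L^2$ bound. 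We must also bound $V_{\mathrm{free}}$ in $L^2_{t,x}$ and not only in mixed norms. Expanding the Gaussian pairing, $V_{\mathrm{free}}$ is a bilinear expression in $f$ and $Z_0$. The $L^2_t$ decay comes from Plancherel in time: it integrates the $\delta(\tau-\ldots)$ over a sphere, and this is where the weights on $f$ and its derivatives enter. The low-frequency loss explains the $\dot H^{s-2}$ component of the assumed regularity.

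\textbf{Step 2 (inverting $1-\mathcal L$).} This is the main obstacle. We need
\[
\sup_{\tau,k}\big|\widehat K(\tau,k)\,\widehat w(k)\big| \lesssim \sum_{j\le 3}\|\an{\cdot}^8\nabla^{\otimes j}f\|_{L^\infty}\;\|\an{\cdot}^8\widehat w\|_{L^\infty},
\]
so that for $\varepsilon_0$ small $1-\mathcal L$ is invertible on $L^2_{t,x}$ by a Neumann series. The difficulty is that $\widehat K(\tau,k)$ is an integral over $\xi$ of $\mathrm{p.v.}\,\frac{1}{\tau\pm|\xi+k|\mp|\xi|}$ times matrix-valued amplitudes in $ff^*$. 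The denominator vanishes on a hypersurface, and for massless particles the phase $|\xi+k|-|\xi|$ degenerates near $\xi=0$ and along collinear directions. Our approach is to write $\widehat K$ as $\int_0^\infty e^{it\tau}\,\widehat{(\cdot)}(t,k)\,dt$ and to prove pointwise time decay $|K(t,\cdot)|\lesssim\an t^{-2}$ in the appropriate norm. This decay comes from integrating by parts three times in $\xi$ against the oscillation $e^{-it|\xi|}$ (which uses the three derivatives of $f$), together with stationary phase on spheres to handle the zero of $\nabla|\xi|$ at the origin. The polynomial weights $\an{\cdot}^8$ ensure integrability of all the resulting terms. We expect this to be the main obstacle; the evenness of $ff^*$ is used to combine the $\pm$ terms.

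\textbf{Step 3 (closing).} From Step 2, $V$ and $\rho_Q$ are bounded in $L^2_{t,x}$ by their free parts, which Step 1 controls. For $\rho_Q$ this already gives the claim. For $Z$ we still bound the Duhamel term in $L^2_\omega L^p_tL^q_x$. By the inhomogeneous Strichartz estimate it suffices to control $(w*V)Y$ in a suitable dual norm. Since $Y$ is a Wiener integral with smooth, decaying $f$, this reduces to $\|\an{\cdot}^{8}\widehat w\|_{L^\infty}\|V\|_{L^2_{t,x}}$ times norms of $f$. Alternatively, we write the Duhamel term as a bilinear Fourier integral and apply a $TT^*$ argument. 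In both cases the smallness of $f$ is not needed here, only its boundedness.
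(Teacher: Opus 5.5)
Your architecture matches the paper's. You reduce to $V=V_{\mathrm{free}}+\mathcal L V$ with $\mathcal L=L_2$, invert $1-L_2$ on $L^2_{t,x}$ by a Neumann series, use orthonormal Strichartz for $\rho_Q$, and handle the Duhamel term by $TT^*$ plus Christ--Kiselev.

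\textbf{Gap in Step~2.} You rightly call this step the heart of the matter, but the mechanism you propose cannot work. The phase $t(|\xi+k|\pm|\xi|)$ is not stationary only at the conic points $\xi=0,-k$. It is stationary on the whole collinear set $\xi\in\R k$: the open segment from $0$ to $-k$ for the $+$ sign, and the two complementary half-lines for the $-$ sign. There you cannot integrate by parts in $\xi$, and stationary phase in the two transverse directions gives exactly $t^{-1}$. For a generic amplitude the kernel therefore contains the term $\frac{\sin(|k|t)}{|k|t}\,\overline F_\pm(h)(k)$ of \Cref{stat:h-cos-decomposition}. Because $L_2$ is causal, $\mathbbm{1}_{t\geq 0}\frac{\sin(|k|t)}{|k|t}$ has a time Fourier transform with a logarithmic singularity at $\tau=\pm|k|$. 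So $\sup_{\tau,k}|\widehat K\,\widehat w|=\infty$ unless that term vanishes, and no number of integrations by parts yields $\an{t}^{-2}$.

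The missing idea is the null structure. After taking $2\Re$, evenness of $ff^*$ makes the anticommutator part cancel by skew-symmetry under $\xi\mapsto-\xi-k$. What remains is the commutator of \Cref{lemma:exp},
\[
[e^{it\boldsymbol\alpha\cdot\xi},e^{-it\boldsymbol\alpha\cdot(\xi+k)}]=2i\,\frac{\sin(t|\xi|)}{|\xi|}\,\frac{\sin(t|\xi+k|)}{|\xi+k|}\,(\xi\wedge k)\cdot A\boldsymbol\alpha ,
\]
which vanishes exactly on the collinear set, so $\overline F_\pm=0$. The factors $\xi\wedge k$ and $g(\xi)-g(\xi+k)$ also absorb the $|k|^{-2}$ in front of the remainder $\widetilde F_\pm$. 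That difference quotient, not a third integration by parts, is where the third derivative of $f$ is used. The paper then puts the remainder in $L^1_tL^\infty_k$ via the coordinates of \Cref{lemma:diffeo}, two integrations by parts in $s_\pm$, and the split $r\le t^{-a}$, $r>t^{-a}$. Your remark that evenness ``combines the $\pm$ terms'' points this way, but your decay argument never uses the resulting vanishing.

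\textbf{Low-frequency problem in Steps~1 and~3.} First, the inhomogeneous-Strichartz route with $V\in L^2_t$ needs the dual of the forbidden endpoint $L^2_tL^\infty_x$ of a 3D wave-type flow, so only the $TT^*$ route is available.

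More seriously, $V_{\mathrm{free}}$ is not bounded in $L^2_{t,x}$ by $\|Z_0\|_{L^2_\omega(\dot H^{s-2}_x\cap H^s_x)}$. Its small-$k$ part comes from frequencies of $Z_0$ near $\mathrm{supp}\,f$, so the $\dot H^{s-2}$ hypothesis does not touch it. Concretely:
\begin{itemize}
\item Take $f=\sqrt\delta\,\chi(|\xi|)\mathbbm{1}_{\C^4}$ with $\chi\in C^\infty_c((1,2))$, and $Z_0=\varphi_\epsilon Y(0)$ with $\varphi_\epsilon=\epsilon^{3/2}\varphi(\epsilon\,\cdot)$, $\varphi$ real, $\widehat\varphi\in C^\infty_c$.
\item Every Sobolev norm of $Z_0$ is $\sim\delta^{1/2}$.
\item Yet $\widehat{V_{\mathrm{free}}}(t,k)\approx c\,\delta\,\widehat\varphi_\epsilon(k)\frac{\sin(t|k|)}{t|k|}$, so $\|V_{\mathrm{free}}\|^2_{L^2_{t,x}}\sim\delta^2\epsilon^{-1}$.
\end{itemize}
In fact, for $t\ll\epsilon^{-1}$ one finds $Z(t)\approx(1-2i\widetilde m t)\varphi_\epsilon Y(t)$. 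This is a local shift of the mass parameter, whose phase drift carries no density. It gives $\|S_Z(Z_0)\|_{L^2_\omega L^p_tL^q_x}\gtrsim|\widehat w(0)|\,\delta\,\epsilon^{s-1}\|Z_0\|$, with $s=2/p<1$.

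So when $\widehat w(0)\neq0$ the random-field half cannot be closed under the stated hypotheses, by your route or the paper's. The paper's corresponding step is \eqref{eq:L3-continuity} with $\nu=s-1$. It rests on a $TT^*$ argument with different Sobolev indices on the two sides of $\Theta$, which is invalid because $\Theta$ does not commute with $|\nabla|$. Compare the vanishing of $\widehat w$ at $0$ assumed in \Cref{th:NL}. The density-matrix half is unaffected and follows from your plan once Step~2 is repaired.
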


We now give a nonlinear application.

Assume that~\(\widehat w(\eta) = \frac{|\eta|^{\frac32}}{\an{\eta}^{10}} \widehat w_0 (\eta)\) with~\(\widehat w_0 \in L^\infty\). Let~\(\Pi\) be the Fourier multiplier by the characteristic function of the ball of center~\(0\) and radius~\(\Lambda >0\). Assume that~\(f\) is compactly supported in this ball such that~\(\Pi Y = Y\) and consider the equation
\begin{equation}\label{eq:cutoff}
i\partial_t X = \mathcal D X  + \left(w* \E[|X|^2]\right) \Pi X\,.
\end{equation}
We have that~\(Y\) is a solution to~\eqref{eq:cutoff}. 

Set~\(S(t) = e^{-it(\D + \widetilde m)}\).

\begin{theorem}\label{th:NL} Under the same hypothesis as~\Cref{th:main}, there exists~\(\varepsilon_1 >0\) such that, for any~\(Z_0 \in L^2_\omega(\dot H^{-\frac12}_x \cap H^{\frac12}_x)\) satisfying $\Vert Z_0 \Vert_{L^2_\omega (\dot H^{-\frac12}_x \cap H^\frac 12_x) }\leq \varepsilon_1$,~\Cref{eq:cutoff} admits a unique solution~\(X \in Y + L^2_{\omega}(C(\R,L^2_x(\R^3))\cap L^4_{t,x})\) with $X\big|_{t=0}=Y+Z_0$.\par

Furthermore, there exists~\(Z_\pm \in L^2_{\omega, x}\) such that
\[
X(t) = Y(t) + S(t) Z_\pm + o_{L^2_{\omega,x}}(1)\,,
\]
as~\(t\) goes to~\(\pm \infty\). 
\end{theorem}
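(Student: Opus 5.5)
We prove \Cref{th:NL} by a fixed-point argument in Duhamel form around the linearized flow~\(S_Z\) of~\eqref{eq:eqlinZ}, relying on the Strichartz estimates of \Cref{th:main} with the admissible pair \((p,q)=(4,4)\), for which \(s = \frac32-\frac34-\frac14=\frac12\). The initial-data space \(L^2_\omega(\dot H^{-1/2}_x\cap H^{1/2}_x)\) is then exactly the one in \Cref{th:main}, and we obtain \(\|S_Z(t)Z_0\|_{L^2_\omega L^4_{t,x}}\lesssim \|Z_0\|_{L^2_\omega(\dot H^{-1/2}\cap H^{1/2})}\).

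Writing \(X=Y+Z\) and using \(\Pi Y=Y\), the perturbation satisfies
\[
i\partial_t Z = (\D+\widetilde m)Z + \big(w*\E(2\Re\la Y,Z\ra)\big)Y + N(Z),
\]
where the remainder \(N(Z)\) is given by
\[
N(Z)=\big(w*\E(2\Re\la Y,Z\ra)\big)\Pi Z+\big(w*\E|Z|^2\big)\Pi(Y+Z).
\]
Here we have absorbed the term \((w*\E|Y|^2)\Pi Z = \widetilde m\,\Pi Z\) into the free part. Strictly, this forces us to work with \(\D+\widetilde m\Pi\) rather than \(\D+\widetilde m\). Since \(\widetilde m\) is a constant, we instead rewrite \(\widetilde m \Pi Z = \widetilde m Z - \widetilde m(1-\Pi)Z\). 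The piece \((1-\Pi)Z\) evolves by the free flow up to a phase and decouples, because \(\Pi\) commutes with \(\D\) and the nonlinearity lives in the range of \(\Pi\). We therefore split \(Z=\Pi Z+(1-\Pi)Z\). The high-frequency part solves an explicit linear equation, \((1-\Pi)Z(t)=e^{-it\D}(1-\Pi)Z_0\), and contributes only through \(\E|Z|^2\) and \(\E\la Y,Z\ra\). Moreover \(\E\la Y,(1-\Pi)Z\ra=0\) in the sense that \(w*\) of it pairs with \(\Pi Y\). Since \(\Pi\) projects to frequencies \(\le\Lambda\), all Sobolev norms of \(\Pi Z\) are equivalent to the \(L^2\) norm, and Bernstein's inequality is available.

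The Duhamel formulation is then
\[
\Pi Z(t)=S_Z(t)\Pi Z_0 - i\int_0^t S_Z(t-\tau)\,\Pi N(Z)(\tau)\,d\tau.
\]
We work in the space \(L^2_\omega(C_tL^2_x\cap L^4_{t,x})\). For the inhomogeneous term we need a dual, retarded Strichartz estimate for \(S_Z\). We intend to derive it in the usual way, via \(TT^*\) and Christ--Kiselev (valid since \(p=4>2\)), from the estimates for \(S_Z\) combined with its \(L^2\)-type boundedness. Because the forcing is frequency-localized, we only need a source in \(L^2_\omega L^{4/3}_{t,x}\). We also need \(L^2_\omega L^\infty_tL^2_x\) control of \(S_Z\) on low frequencies. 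This should follow from the resolvent/Duhamel representation used to prove \Cref{th:main}, since \(S_Z\) is \(S(t)\) plus a finite-rank-in-\(\omega\) correction driven by \(\E\la Y,Z\ra\), which is itself controlled in \(L^2_{t,x}\) by the density estimate.

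The nonlinear terms are estimated as follows. The quantity \(\E|Z|^2\) lies in \(L^2_{t,x}\) with norm bounded by \(\|Z\|_{L^2_\omega L^4_{t,x}}^2\). The weight \(|\eta|^{3/2}\an{\eta}^{-10}\) in \(\widehat w\) makes \(w*\) map \(L^2_x\) into \(L^\infty_x\cap L^2_x\); the factor \(|\eta|^{3/2}\) compensates the \(\dot H^{-1/2}\)-type behaviour at low frequency, which is where this cubic term needs help. The quadratic-in-\(Z\) term \((w*\E\Re\la Y,Z\ra)\Pi Z\) requires \(\E\la Y,Z\ra\in L^2_{t,x}\) for the nonlinear solution. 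This follows from the density version of \Cref{th:main}, applied to \(Q\) built from \(\E|Y\ra\la Z|\) and the equivalence with~\eqref{eq:eqlinQ}. The main obstacle is the term \((w*\E|Z|^2)\Pi Y\): \(Y\) is not in \(L^2_x\), so it must be placed in \(L^\infty_xL^2_\omega\). This forces the dual estimate to accept sources of the form (space-time function)\(\times Y\), i.e. exactly the structure of the linear coupling in~\eqref{eq:eqlinZ}. We handle it by treating \(V:=w*\E(2\Re\la Y,Z\ra+|Z|^2)\) as a given potential and solving the linear problem with source \(VY\) through the same resolvent identity used for \(S_Z\). Smallness of \(\varepsilon_0\) together with the weight \(\an{\eta}^{-10}\) should then close the estimate.

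A contraction for \(\varepsilon_1\) small gives existence and uniqueness. For scattering, we note that the integrand belongs to the dual Strichartz space on \(\R\). Hence \(S(-t)\Pi Z(t)\) is Cauchy in \(L^2_{\omega,x}\): the linear coupling term decays because \(\E\la Y,Z\ra\in L^2_{t,x}\), and the nonlinear terms are integrable. This yields the limits \(Z_\pm\).
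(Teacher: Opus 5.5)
There is a genuine gap, and it sits in your very first step.

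\textbf{The linear estimate.} For $(p,q)=(4,4)$ one has $s=\frac12$, so \Cref{th:main} gives $S_Z\colon L^2_\omega(\dot H^{s-2}_x\cap H^{s}_x)=L^2_\omega(\dot H^{-3/2}_x\cap H^{1/2}_x)\to L^2_\omega L^4_{t,x}$. It does not start from $L^2_\omega(\dot H^{-1/2}_x\cap H^{1/2}_x)$. The loss of one derivative at low frequencies comes from composing $L_3\colon L^2_\omega\dot H^{\nu-1}_x\to L^2_t\dot H^{\nu}_{-4,x}$ with $L_1W_4\colon L^2_t\dot H^{s-1}_x\to L^2_\omega L^p_tL^q_x$. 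For the same reason, with data only in $\dot H^{-1/2}_x$ at low frequencies, $\E\la Y,Z\ra$ is not controlled in $L^2_{t,x}$: already $L_3(Z_0)$ is only in $L^2_t\dot H^{1/2}_{-4,x}$. The density part of \Cref{th:main} cannot supply this for the nonlinear solution either, since it concerns the linear flow with $\mathfrak S^{4/3}$ data.

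Bridging this low-frequency loss is exactly what the factor $|\eta|^{3/2}$ in $\widehat w$ is for. It must enter the linear part, not only the cubic term. This is the missing idea. The paper splits $w*\cdot=w_0*L_0L_0\,\cdot$, with $L_0$ the multiplier by $|\eta|^{3/4}\an{\eta}^{-5}$, and takes as second unknown the weighted density $V=L_0\E(2\Re\la Y,Z\ra+|Z|^2)\in L^2_{t,x}$. Then $\widetilde L_3=L_0L_3$ maps $L^2_\omega\dot H^{-1/2}_x$ to $L^2_t\dot H^{-1/4}_x$ and $L^2_\omega\dot H^{-1/4}_x$ to $L^2_{t,x}$. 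Also $\widetilde L_1(1-\widetilde L_2)^{-1}=L_1^{w_0}L_0(1-\widetilde L_2)^{-1}$ maps $L^2_t\dot H^{-1/4}_x$ to $\Xi$ (\Cref{lem:contOp}, via \Cref{cor:estimateswithSobolev}), with $1-\widetilde L_2$ invertible on $L^2_{t,x}$.

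\textbf{The frame built around $S_Z$.} This also fails as stated, for three reasons.
\begin{enumerate}
\item The nonlinearity $(w*\E|X|^2)\Pi X$ is a product with a factor that is not band-limited, so it does not lie in the range of $\Pi$. Hence $(1-\Pi)Z$ is not a free evolution. Likewise $\E\la Y,(1-\Pi)Z\ra$ is in general nonzero as a function of $x$: its Fourier transform at $\eta\neq0$ pairs frequencies $|\xi|\le\Lambda$ with $|\xi+\eta|>\Lambda$. So you cannot reduce to $\Pi Z$ with a frequency-localized forcing. The splitting is also unnecessary: $\widehat w(0)=0$ gives $\widetilde m=0$.
\item A Duhamel formula with $S_Z(t-\tau)$, with retarded estimates obtained ``via $TT^*$'', does not work. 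The coupling $Z\mapsto(w*\E\,2\Re\la Y,Z\ra)Y$ is symmetric but only $\R$-linear, so $-i$ times it is not skew-adjoint and $S_Z$ is not unitary. Consequently $S_Z(t)S_Z(\tau)^*$ is not the propagator from $\tau$ to $t$.
\item The dual estimate for $(4,4)$ loses one derivative, and your argument does not absorb that loss.
\end{enumerate}

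\textbf{What the paper does instead.} It never uses $S_Z$ in the nonlinear problem; it keeps the free $S(t)$ and solves the coupled system for $(Z,V)$ in $\Xi\times L^2_{t,x}$.
\begin{enumerate}
\item It absorbs the derivative loss by Leibniz, with $\Pi$ acting only on $Z$: $\|(w_0*L_0V)\Pi Z\|_{L^2_\omega L^{4/3}_tW^{1,4/3}_x}\lesssim\|w_0*L_0V\|_{L^2_tH^1_x}\|\Pi Z\|_{L^2_\omega L^4_tW^{1,4}_x}$.
\item It bounds $\widetilde Q_2$ by duality against $L_1'\circ L_0$.
\end{enumerate}
Your closing remark, treating $V$ as a given potential and using the resolvent identity, points in this direction. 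It has to replace the $S_Z$-Duhamel scheme, not supplement it, and it must be carried out in the weighted spaces above.
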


\begin{remark}
    The study of scattering for Hartree--Fock equations originated from the question of dispersive estimates for orthogonal systems, which were first proven in~\cite{FLLS} for the Schrödinger dispersion relation. This work was followed by a series of papers aimed at a more systematic study, notably in~\cite{FS, HStrichartzDirac}, and for other dispersion relations, including the Dirac case~\cite{Hos-2024-JFA}.

    The seminal articles~\cite{LS2,LS1} studied the stability of steady states for the reduced Hartree--Fock equation in dimension~\(2\). They were then extended to higher dimension by~\cite{CHP}. The link between the Hartree--Fock equation and the Vlasov equation was already present in~\cite{LS2,LS1} through the Penrose stability condition and was made completely explicit in this context by~\cite{LS3}. 

    Simultaneously, two parallel lines of research emerged: on the one hand, the use of kinetic techniques, notably in~\cite{NY1,NY2}; on the other hand, the application of the full range of dispersive techniques, either through a random field framework due to the second author~\cite{CDdSM25,CdS20,CdS22,M25}, or by establishing functional-analytic properties for orthogonal systems (see~\cite{BHS,Had-2025-AHL}).

    To the best of our knowledge, the study of the stability of steady states has been limited to the reduced Hartree--Fock equation, or the full Hartree--Fock equation with exchange term; see~\cite{NY2}. Consequently, the present work provides the first analysis of equilibrium stability for the Dirac--Fock equation.
\end{remark}

\begin{remark}
Our proof relies on a perturbative argument quite similar to~\cite{CDdSM25}, which accounts for the high regularity required on the steady state given by~\(f\). Moreover, while the Hartree--Fock equation has a dispersion relation that is the archetype of a convex function, for the massless Dirac equation the dispersion is similar to that of the wave equation, resulting in weaker dispersive properties. We compensate for this lack of favorable algebraic structure in the dispersion relation by assuming decay hypotheses (in Fourier space) on both the steady state given by~\(f\) and the interaction potential~\(w\). 

At this stage, there is room for improvement regarding the assumptions on~\(f\) and~\(w\), particularly concerning their decay in Fourier space. However, what the optimal hypotheses would be remains unclear to us.
\end{remark}

\begin{remark}
The nonlinear result in \Cref{th:NL} concerns a simple model which allows us to directly apply standard strategies: the hypothesis on~\(w\) is somewhat similar to~\cite{CHP}, and the high-frequency cut-off is reminiscent of studies on the Bogoliubov--Dirac--Fock model; see~\cite{HLS, HaiLewSer-2005-JPA, HaiLewSol-2007-CPAM} for the statics and \cite{BM25, HaiLewSpa-2005-LMP, Mor-2025-JDE} for the dynamics. Let us mention that another kind of gauge-invariant high-frequency regularization has also been used in the time-independent case \cite{BorMor-2026-CAOT, GraHaiLewSer-2013-ARMA, GraLewSer-2018-JMPA, Mor-2025-AHP, Mor-2026-arx}.

On the one hand, the additional hypotheses on~\(w\) serves to overcome the difficulty stemming from the loss of derivatives at low frequencies in the Strichartz estimates, this amounts to taking the~\(\dot H^{s-2}_x\) norm of the initial datum. For the Hartree--Fock equation, this issue is usually solved in the literature by exploiting the null structure of the nonlinearity; see~\cite{CdS22,LS2}. For the Dirac--Fock equation, however, two drawbacks arise: first, the linear part exhibits an even greater loss at low frequencies than in the Hartree--Fock case; second, given the involved algebraic structure of the Dirac equation, it is far from clear whether a similar null structure can be retrieved.

On the other hand, the high-frequency cut-off is introduced to address the loss of derivatives at high frequencies, a feature specific to the cubic Dirac equation. First, Strichartz estimates for Dirac always lose derivatives (unlike in the Schrödinger case). Second, the numerology of the cubic Dirac equation forces us to use both direct and dual Strichartz estimates to close a fixed-point argument leading to scattering. Consequently, applying standard scattering techniques requires a mechanism to compensate for this double loss of derivatives; see~\cite{CM26}. Several strategies exist in the literature to bypass this difficulty: one approach relies on endpoint Strichartz estimates~\cite{MNNO}, while another employs the~$V$ spaces introduced by~\cite{KTVspaces} (see~\cite{BH17,CH18}). At this stage, the most efficient way seems to be the use of the~\(V\) spaces but we wish to draw the attention of the reader to the fact that this is quite a technical tool.
\end{remark}

\begin{remark}
As usual, Strichartz and Sobolev estimates can be combined to obtain an extended range of Strichartz estimates. To do so, one would require stronger decay on~\(f\). In the same spirit, one could trade decay of~\(\widehat w\) for decay of~\(f\), going down to the hypothesis~\(\an{\,\cdot\,}^6 \widehat w \in L^\infty\). At this stage, we did not find it particularly relevant to play this game. First, our hypothesis on~\(w\) is not sharp and thus pushing the assumption down to~$\an{\,\cdot\,}^6 \widehat w \in L^\infty$ seems of limited interest. Second, while an extended range of Strichartz inequalities is typically used to solve nonlinear problems, our use of the high-frequency cut-off (as explained above) makes such a range secondary. To address the full nonlinear problem, one would need to tailor the Strichartz estimates to~\(V\) spaces, which constitutes a different issue.
\end{remark}

\subsection{Sketch of the Proof and Further Remarks}

The proof proceeds as follows. First, in~\Cref{subsec:reduction}, we reduce the linear problem to studying the equation
\[
V = V_0 + L_2 (V)\,,
\]
where~\(V : \R \times \R^3 \rightarrow \R\) is the unknown and~\(L_2\) is a linear operator. Here, the definitions of~$V$ and~$V_0$ depend on the chosen framework:
\begin{itemize}
\item In the random field setting, $V = 2\mathrm{Re}\E(\an{Y,Z})$ and $V_0 = 2 \mathrm{Re}\E(\an{Y,S(t)Z_0})$;
\item In the density matrix setting, $V = \rho_Q$ and $V_0 = \rho_{S(t) Q_0 S(-t)}$.
\end{itemize}

We then prove that~\(L_2\) is a Fourier multiplier which is continuous from~\(L^2_{t,x}\) to itself and whose norm is controlled by some norms of~\(f\) and~\(w\). Then, assuming that~\(f\) is small in this norm, one can invert~\(1-L_2\). Note that this strategy is similar to~\cite{LS2} but is made more involved by the Dirac operator. In particular, the proof relies crucially on~\Cref{stat:h-cos-decomposition} which makes use of the oscillatory character of the integral defining~\(L_2\). 

In the case of random fields, we then use that
\[
Z = S(t)Z_0 + L_1(1-L_2)^{-1}V_0\,,
\]
with
\[
L_1 (V) = -i \int_0^t S(t-s) [w*V(s) Y(s)] ds\,.
\]
Finally, we use a~\(TT^*\) argument due to~\cite{CdS22} to estimate~\(L_1\) in~\(L^p_tL^q_x\)--spaces. We use the same argument to estimate~\(V_0\) thanks to~\(Z_0\) and this concludes the proof of~\Cref{th:main}.

In the case of density matrices, we use that
\[
\rho_Q = (1-L_2)^{-1}\rho_{S(t)Q_0S(-t)} .
\]
We then use 
Strichartz estimates for orthonormal systems to get~\(L^{p/2}_tL^{q/2}_x\)--bounds (see Lemma~\ref{lemma:Str_est}) on~\(\rho_{S(t)Q_0S(-t)}\) with~\(p=q=4\).

For the nonlinear part, we use the decomposition of~\Cref{subsec:reduction}. The linear part is written in such a way that one may overcome the singularities in low frequencies using the particular structure of~\(w\). Therefore, it remains to prove bilinear estimates for the nonlinear part.

\begin{remark} Note that the core of the proof, namely~\Cref{stat:h-cos-decomposition} and~\Cref{stat:h-sin-decomposition}, should find a counterpart in higher dimension. In two dimensions, however, it is doubtful that an analog of~\Cref{stat:h-cos-decomposition} holds. Nevertheless, the specific algebra of the Dirac operator in dimension~\(2\) might help resolve this issue.
\end{remark}

\begin{remark} Note that in the case of density matrices, one has
\[
\rho_Q = \rho_{S(t)Q_0S(-t)}  + L_2 (1-L_2)^{-1}\rho_{S(t)Q_0S(-t)} \,.
\]
Therefore,~\(\rho_Q\) decomposes as a sum of a function~\(\rho_{S(t)Q_0S(-t)}\), which belongs to~\(L^{p/2}_t L^{q/2}_x\)--spaces thanks to standard Strichartz estimates, and of~\(L_2 (1-L_2)^{-1}\rho_{S(t)Q_0S(-t)}\). With a slight adjustment of the proof of~\Cref{stat:h-cos-decomposition} and with some more decay on~\(\widehat w\), one could get that~\(\widetilde F_\pm\) (see the notation of~\Cref{stat:h-cos-decomposition}) belongs to~\(L^{(2+p)/2p}_t L^{2q/(q-2)}_\eta\), which would imply that~\(L_2\) is bounded from~\(L^2_{t,x}\) to~\(L^p_t L^q_x\) (for all~\(p,q \geq 2\)). This would ensure that
\[
L_2 (1-L_2)^{-1}\rho_{S(t)Q_0S(-t)} \in L^p_t L^q_x \,.
\]
This might be useful in the study of the nonlinear problem. However, we have that
\[
L^{p_1/2}_t L^{q_1/2}_x = L^{p_2}_t L^{q_2}_x \,,
\]
with~\(p_1,q_1\) in the range of the Strichartz estimates and~\(p_2,q_2\geq 2\), if and only if~\(p_2=q_2 = 2\) and~\(p_1 = q_1 = 4\).
\end{remark}

\begin{remark}
We now compare the dispersion relation of the half-wave equation,~\(\omega_0(\xi) = |\xi|\), and that of the Dirac--Fock equation,~\(\omega_1(\xi) = \alpha\cdot \xi\). Observing the proof of~\Cref{prop:L_2}, one may see that, due to the fact that~\(\omega
_1\) is odd, the oscillatory part of the integral defining~\(L_2\) is given by
\[
[e^{it\alpha\cdot \xi}, e^{-it\alpha \cdot (\xi + \eta)}]\,,
\]
while if the Dirac dispersion was replaced by the half-wave one, since~\(\omega_0\) is even, the oscillatory part would be given by
\[
\{e^{it |\xi|} , e^{-it |\xi + \eta|} \}  = 2e^{it(|\xi| - |\xi+\eta|)}\,.
\]
Now, these two oscillatory factors oscillate in the same way but they have different behaviors at~\(\eta = 0\). In fact, for~\(\xi\) and~\(t\) fixed, we have 
\[
[e^{it\alpha\cdot \xi}, e^{-it\alpha \cdot (\xi + \eta)}] \sim_{\eta\to 0} |\eta|\,
\]
while~\(e^{it(|\xi| - |\xi+\eta|)} \sim 1\). This is crucial to conclude the proof of~\Cref{prop:L2-continuity-of-L2}. 

In the case of the Hartree--Fock equation, the oscillatory part is given by
\[
e^{-it2\xi \cdot \eta}\,,
\]
which oscillates much more: different mechanisms are at play!
\end{remark}

\subsection{Notation}\label{sec:notation}

\begin{itemize}
\item We denote~\(\an{x} = \sqrt{1+|x|^2}\), for any~\(x\in \R^3\).
    \item We adopt the definition of the Fourier transform that acts as an isometry on $L^2(\R^3)$, namely
    \[
    \widehat u (\xi) = \frac1{(2\pi)^{3/2}} \int_{\R^3} u(x) e^{-ix\cdot \xi} dx\,.
    \]
    \item For $p,q \in [1, +\infty]$, we denote by $L^p_t L^q_x$ the space
    \[
    L^p(\R; L^q(\R^3))\,.
    \]
    \item For $\nu\in\R$ and $s>0$, we denote by \(\dot H^\nu_{-s,x}\) the space
    \[
    (1-\Delta)^{s/2}\dot H_x^\nu\,.
    \]
    \item For $p \in [1, +\infty]$, we denote $\mathfrak S^p(\mathcal{H})$ the $p$--Schatten class over the Hilbert space $\mathcal{H}$, defined as
    \[
    \{Q\in\mathcal{L}(\mathcal{H});\;\Tr(|Q|^p)^{\frac{1}{p}}<+\infty\}\,,
    \]
    endowed with the norm
    \[
    \|Q\|_{\mathfrak{S}^p(\mathcal{H})}=\Tr(|Q|^p)^{\frac{1}{p}}\,.
    \]
    Notice that $\mathfrak S^p$ coincides with the set of trace-class operators for $p=1$ and of Hilbert-Schmidt operators for $p=2$.
    \item For $q \in [2, +\infty]$, $s \in \R$, we denote by $\| \cdot \|_{ W^{s,q}_x}$ the norm 
    \[
    \| f \|_{W^{s,q}_x} = \| \an{D}^s f \|_{L^q(\R^3)}\,,
    \]
    with $\an{D} = (1 -\Delta)^\frac 12$. Notice that the principal symbol of $\an{D}$ is given by $\an{\xi}$.
\end{itemize}

\section{Remarks on the Equilibrium and Reduction of the Problem}

\subsection{Study of the Equilibrium}\label{subsec:studyEq}

\begin{proposition}
The random field $Y_f$, defined in \eqref{eq:Yf}, is a solution of \eqref{eq:DF}. 
\end{proposition}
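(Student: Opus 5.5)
The plan is to compute both sides of \eqref{eq:DF} for $Y=Y_f$ and check that the nonlinear term is exactly $\widetilde m\, Y$. Everything reduces to showing that $\E[|Y(t,x)|^2]$ is constant in $t$ and $x$ and equals $\int_{\R^3}\Tr\big(f^*(\xi)f(\xi)\big)\,d\xi$.

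\textbf{Time derivative.} Differentiate \eqref{eq:Yf} under the stochastic integral. This is justified because $f\in L^2$ and the integrand is a bounded smooth function of $t$ times $f$; the derivative should be interpreted in $L^2_\omega$, with the usual It\^o isometry controlling difference quotients. The result is
\[
i\partial_t Y=\int_{\R^3}(\boldsymbol\alpha\cdot\xi+\widetilde m)\,e^{-it(\boldsymbol\alpha\cdot\xi+\widetilde m)}e^{ix\cdot\xi}f(\xi)\,dW(\xi).
\]
Since $\D$ acts as the Fourier multiplier $\boldsymbol\alpha\cdot\xi$ and $e^{ix\cdot\xi}$ is the plane wave, applying $\D=-i\boldsymbol\alpha\cdot\nabla_x$ inside the integral produces the factor $\boldsymbol\alpha\cdot\xi$. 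This matrix commutes with $e^{-it\boldsymbol\alpha\cdot\xi}$, so $i\partial_t Y=\D Y+\widetilde m\,Y$. Justifying $\D Y$ needs some care, because $Y$ is not in $L^2_x$. The cleanest route is to view $\D Y$ distributionally in $x$, or pointwise in $L^2_\omega$ under the mild assumption $\an{\xi}f\in L^2$. Alternatively, one can read the identity weakly by pairing against test functions.

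\textbf{Density.} By \eqref{formula_dW},
\[
\E\big[|Y(t,x)|^2\big]=\int_{\R^3}\Tr\Big(e^{-it(\boldsymbol\alpha\cdot\xi+\widetilde m)}f(\xi)f(\xi)^*e^{it(\boldsymbol\alpha\cdot\xi+\widetilde m)}\Big)\,d\xi.
\]
This uses $\E\la A\,dW(\xi),B\,dW(\eta)\ra=\Tr(A^*B)\,\delta(\xi-\eta)$. The phases $e^{\mp ix\cdot\xi}$ cancel on the diagonal, as does the scalar $e^{\mp it\widetilde m}$. By cyclicity of the trace the unitary conjugation disappears, without even needing the commutation assumption. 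We obtain $\E|Y|^2=\int\Tr(f^*f)\,d\xi$, a constant.

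\textbf{Nonlinear term.} It follows that $w*\E|Y|^2=\int w\,dx\cdot\int\Tr(f^*f)\,d\xi=\widetilde m$, so the nonlinear term equals $\widetilde m Y$. Combined with the time-derivative computation, this gives \eqref{eq:DF}.

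The main obstacle is the rigorous meaning of differentiation and of $\D$ applied to the stochastic integral, since $Y$ is non-localized. I would handle it by working in $L^2_\omega$ for fixed $(t,x)$, where It\^o isometry gives $\E|Y(t,x)|^2=\|f\|_{L^2}^2<\infty$, and by stating the equation in the distributional sense in $(t,x)$.
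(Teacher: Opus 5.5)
Your proposal is correct and follows the paper's route. You compute $\E|Y_f|^2=\int\Tr(f^*f)\,d\xi$ from the covariance of $dW$ (the unitaries cancel on the diagonal, so the commutation assumption is not needed), deduce $w*\E|Y_f|^2=\widetilde m$, and differentiate under the stochastic integral.

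One small correction to your rigor remarks: the $t$-difference quotients converge in $L^2_\omega$ only under the same condition $\an{\xi}f\in L^2$ that you impose for $\D Y_f$, not merely $f\in L^2$, because $\partial_t$ produces the unbounded factor $\boldsymbol\alpha\cdot\xi+\widetilde m$. Your distributional fallback still covers the general case.
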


\begin{proof}
Indeed,
\begin{align*}
     & \E \Big ( \la Y_f, Y_f \ra_{\C^4} \Big ) = \E \bigg( \int_{\R^3} \int_{\R^3} \la e^{-it (\boldsymbol\alpha \cdot \xi + \widetilde m)} e^{i x \cdot \xi} f(\xi) dW(\xi), e^{-it (\boldsymbol\alpha \cdot \xi' + \widetilde m)} e^{i x \cdot \xi'} f(\xi') dW(\xi') \ra_{\C^4} \bigg )\\
     &\hspace{1.5cm} = \int_{\R^3} \int_{\R^3} \Tr \bigg( f^*(\xi') e^{-i x \cdot \xi'} e^{it(\boldsymbol\alpha \cdot \xi' + \widetilde m)}  e^{-it(\boldsymbol\alpha \cdot \xi + \widetilde m)}  e^{i x\cdot \xi} f(\xi) \bigg )  \E \Big ( \vert dW(\xi) \ra \la dW(\xi') \vert \Big ) \\
     &\hspace{1.5cm} = \int_{\R^3} \int_{\R^3} \Tr \bigg( f^*(\xi') e^{-i x \cdot \xi'} e^{it(\boldsymbol\alpha \cdot \xi' + \widetilde m)}  e^{-it(\boldsymbol\alpha \cdot \xi + \widetilde m)}  e^{i x\cdot \xi} f(\xi) \bigg ) \delta (\xi - \xi') d\xi d\xi'\\
     &\hspace{1.5cm} = \int_{\R^3} \Tr \big (f^* (\xi) f (\xi) \big)=\|f\|^2_{L^2\left(\R^3,\mathcal M_{4}(\C)\right)}\,.
\end{align*}
Therefore, recalling the definition of $\widetilde m$ given above, we have
\begin{align*}
     i \partial_t Y_f &= i \int_{\R^3} -i (\boldsymbol\alpha \cdot \xi + \widetilde m) e^{-it(\boldsymbol\alpha \cdot \xi + \widetilde m)} e^{ix\cdot \xi} f(\xi) dW(\xi) \\
     &= \Big ( \D + w \ast \E [\vert Y_f \vert^2] \Big ) Y_f\,.
\end{align*}
\end{proof}

\begin{proposition} The following statements are equivalent: \begin{enumerate}
    \item the operator~\(\gamma_f\) commutes with~\(\D\);
    \item for almost all~\(\xi \in \R^3\), the operator~\(\boldsymbol \alpha\cdot \xi\) commutes with~\(f(\xi)f(\xi)^*\);
    \item the operator~\(\gamma_f\) is a steady state of~\eqref{eq:realDF};
    \item for all~\(t\in \R\), the covariance of~\(Y_f\) is~\(\gamma_f\).
\end{enumerate}
\end{proposition}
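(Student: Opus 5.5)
We establish the equivalences by proving (1)\(\Leftrightarrow\)(2), (1)\(\Leftrightarrow\)(3), and (2)\(\Leftrightarrow\)(4), working on the Fourier side throughout.

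\emph{(1)\(\Leftrightarrow\)(2).} Both \(\gamma_f\) and \(\D\) are Fourier multipliers, by \(f(\xi)f(\xi)^*\) and \(\boldsymbol\alpha\cdot\xi\) respectively. Their commutator \([\D,\gamma_f]\) is therefore the Fourier multiplier by \([\boldsymbol\alpha\cdot\xi, f(\xi)f(\xi)^*]\). A matrix-valued Fourier multiplier vanishes as an operator (on a dense domain such as Schwartz functions) if and only if its symbol vanishes for almost every \(\xi\). This gives the equivalence directly.

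\emph{(1)\(\Leftrightarrow\)(3).} First compute \(\rho_{\gamma_f}\). The integral kernel of \(\gamma_f\) is \(\gamma_f(x,y)=(2\pi)^{-3}\int e^{i(x-y)\cdot\xi}f(\xi)f(\xi)^*\,d\xi\), so its diagonal trace is the constant
\[
\rho_{\gamma_f}=(2\pi)^{-3}\int \Tr(f f^*)\,d\xi .
\]
Hence \(w*\rho_{\gamma_f}\) is the constant \(c=\rho_{\gamma_f}\int w\), which commutes with every operator. Equation~\eqref{eq:realDF} evaluated at the time-independent \(\gamma_f\) then reduces to \(0=[\D+c,\gamma_f]=[\D,\gamma_f]\). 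Thus \(\gamma_f\) is a steady state exactly when (1) holds. The normalization constant depends on the Fourier convention, but only its constancy in \(x\) matters.

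\emph{(2)\(\Leftrightarrow\)(4).} Compute the covariance \(\E\big(|Y_f(t,x)\rangle\langle Y_f(t,y)|\big)\) using~\eqref{formula_dW}, as in the previous proposition. The scalar phases \(e^{\mp it\widetilde m}\) cancel, and the result is
\[
\int e^{i(x-y)\cdot\xi}\, e^{-it\boldsymbol\alpha\cdot\xi} f(\xi)f(\xi)^* e^{it\boldsymbol\alpha\cdot\xi}\,d\xi ,
\]
with the same normalization as for \(\gamma_f\). So the covariance at time \(t\) is the Fourier multiplier by \(e^{-it\boldsymbol\alpha\cdot\xi}ff^*e^{it\boldsymbol\alpha\cdot\xi}\).

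If (2) holds, \(ff^*\) commutes with \(\boldsymbol\alpha\cdot\xi\), hence with its exponential, and the symbol equals \(ff^*\) for every \(t\). This gives (4).

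Conversely, suppose (4) holds. Then for a.e.\ \(\xi\) and every \(t\) we have \(e^{-it\boldsymbol\alpha\cdot\xi}ff^*e^{it\boldsymbol\alpha\cdot\xi}=ff^*\). To pass from "for every \(t\)" to "a.e.\ \(\xi\), all \(t\)", apply Fubini on \(\R_t\times\R^3_\xi\) and then use continuity in \(t\). Differentiating at \(t=0\) gives \(-i[\boldsymbol\alpha\cdot\xi,ff^*]=0\), which is (2).

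The only delicate point is this measure-theoretic bookkeeping in the converse direction; the rest is direct computation.
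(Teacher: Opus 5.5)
Your proposal is correct and follows the paper's proof essentially step by step. You get (1)$\Leftrightarrow$(2) because both operators are Fourier multipliers, (1)$\Leftrightarrow$(3) because $\rho_{\gamma_f}$ is constant so $w*\rho_{\gamma_f}$ drops out of the commutator, and (2)$\Leftrightarrow$(4) by computing that the covariance of $Y_f(t)$ is the Fourier multiplier by $e^{-it\boldsymbol\alpha\cdot\xi}f(\xi)f(\xi)^*e^{it\boldsymbol\alpha\cdot\xi}$; your extra care in the converse (exchanging the $t$-dependent null sets via Fubini and continuity in $t$, then differentiating at $t=0$) just makes explicit what the paper leaves implicit.
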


\begin{proof}
The equivalence between Assertion~(1) and Assertion~(2) is due to the fact that both~\(\gamma_f\) and~\(\D\) are Fourier multipliers, by respectively~\(f(\xi)f(\xi)^*\) and~\(\alpha \cdot \xi\).

The equivalence between Assertion~(1) and Assertion~(3) is due to the fact that~\(\rho_{\gamma_f}\) is the constant
\[
\int \Tr_{\C^4}(f(\xi) f(\xi)^*)\,d\xi\,.
\]
Therefore, we have
\[
[\D + w*\rho_{\gamma_f}, \gamma_f] = [\D,\gamma_f]\,.
\]
We deduce that~\(\gamma_f\) is a steady-state of~\eqref{eq:realDF} if and only if~\([\D,\gamma_f] = 0\).

Finally, we prove the equivalence between Assertion~(2) and Assertion~(4). Let~\(\gamma (t)\) be the covariance of~\(Y(t)\), namely the operator with integral kernel given by
\[
 \E \Big ( \vert Y(t,x) \ra \la Y(t,y) \vert \Big)\, .
\]
Recall that
\begin{equation}\label{eq:mass-Dirac-propagator}
    S(t) = e^{-it(\D + \widetilde m)}
\end{equation}
and set
\[
Y_f(0)= Y_f \big \vert_{t=0} = \int_{\R^3} e^{ix \cdot \xi} f(\xi) dW(\xi)\,.
\]
We easily have $Y_f(t) = S(t) Y_f(0)$. Then, we compute
\begin{align*}
    &\an{w,\E \Big ( \vert S(t) Y_f(0) \ra \la S(t) Y_f(0) \vert \Big ) v}\\
    &\hspace{3cm} = \E \Big ( \int_{\R^3}  \an{S(-t) w ,e^{ix \cdot \xi} f(\xi) dW(\xi)}_{L^2_x}  \int_{\R^3}\la   e^{iy \cdot \xi'}  f(\xi') dW(\xi'), S(-t) v\ra_{L^2_y} \Big ) \\
    &\hspace{3cm} = \E \Big ( \int_{\R^3}  \an{e^{it\boldsymbol\alpha\cdot\xi}\widehat w(\xi) , f(\xi) dW(\xi)}_{\C^4}  \int_{\R^3}\la     f(\xi') dW(\xi'), e^{it\boldsymbol\alpha\cdot\xi'}\widehat v(\xi')\ra_{\C^4} \Big ) \\
    &\hspace{3cm} = \int_{\R^3} \int_{\R^3}  \la f(\xi)^* \widehat w(\xi),\E \Big ( | dW(\xi) \ra \la dW(\xi')| \Big) f(\xi')  \widehat v(\xi') \ra_{\C^4}\\
    &\hspace{3cm} = \int_{\R^3}   \la f(\xi)^* e^{it\boldsymbol\alpha\cdot\xi} \widehat w(\xi), f(\xi) e^{it\boldsymbol\alpha\cdot\xi} \widehat v(\xi) \ra_{\C^4} d\xi \\
    &\hspace{3cm} = \int_{\R^3}  \an{\widehat w(\xi), e^{-it\boldsymbol\alpha\cdot\xi}f(\xi) f(\xi)^* e^{it\boldsymbol\alpha\cdot\xi} \widehat v(\xi)}_{\C^4} d\xi\,,
\end{align*}
meaning that the operator $\gamma(t)$ is a Fourier multiplier with symbol given by
\[
e^{- it\boldsymbol\alpha\cdot\xi } f(\xi)f(\xi)^* e^{it\boldsymbol\alpha\cdot\xi}\,.
\]
Therefore,~\(\gamma(0)\) is~\(\gamma_f\) and~\(\gamma(t)\) does not depend on~\(t\) if and only if the operator~\(f(\xi)f(\xi)^*\) coincide with~\( e^{-it\boldsymbol\alpha\cdot\xi}f(\xi) f(\xi)^* e^{it\boldsymbol\alpha\cdot\xi}\), for almost all $\xi\in\R^3$ and all $t\in\R$, or equivalently, if and only if~\(f(\xi)f(\xi)^*\) commutes with~\(\boldsymbol\alpha\cdot \xi\) almost surely in~\(\xi\).
\end{proof}

\subsection{Reduction of the Problem}\label{subsec:reduction}

Recall that 
we are interested in the linear stability of~\(Y\), in the sense that we wish to prove Strichartz estimates for~\Cref{eq:eqlinZ}.

\begin{proposition}\label{stat:reduced-Z-problem}
Consider the following operators
\begin{align}
\label{eq:defL1} L_1(V)(t) = &-i \int_0^t S(t-s)[w*V(s) Y(s)]\,ds\,, \\
\label{eq:defL2} L_2(V) = & \E \big (2 \Re \la Y,L_1(V) \ra_{\C^4})\,, \\
\label{eq:defL3} L_3(Z_0)(t) = & \E \big (2 \Re \la Y(t),S(t) Z_0\ra_{\C^4}\big )\,,
\end{align}
and assume that~\(1-L_2\) is invertible.

The solution~\(Z\) to~\eqref{eq:eqlinZ} with initial datum~\(Z_0\) satisfies
\[
Z(t) = S(t) Z_0 + L_1(1-L_2)^{-1} L_3(Z_0)(t)\,. 
\]
\end{proposition}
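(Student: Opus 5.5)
The plan is to rewrite \eqref{eq:eqlinZ} in Duhamel form and then close an equation on the scalar unknown \(V = 2\Re\E\la Y,Z\ra_{\C^4}\). Since \(S(t) = e^{-it(\D+\widetilde m)}\) is the propagator of the linear part of \eqref{eq:eqlinZ}, the solution with datum \(Z_0\) satisfies
\[
Z(t) = S(t)Z_0 - i\int_0^t S(t-s)\big[(w*V(s))\,Y(s)\big]\,ds = S(t)Z_0 + L_1(V)(t)\,,
\]
where \(V(s) = \E\big(2\Re\la Y(s),Z(s)\ra_{\C^4}\big)\) and \(L_1\) is the operator in \eqref{eq:defL1}. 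This is the usual equivalence between the differential equation and its integral form. I will take it in the sense in which solutions are understood here: mild solutions, with \(Z\in L^2_\omega\) and the expectations well defined.

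Next I pair this identity with \(Y(t)\), take twice the real part and then the expectation. By linearity of \(\E\) and of \(\Re\la Y,\cdot\ra\),
\[
V(t) = \E\big(2\Re\la Y(t),S(t)Z_0\ra_{\C^4}\big) + \E\big(2\Re\la Y(t),L_1(V)(t)\ra_{\C^4}\big) = L_3(Z_0)(t) + L_2(V)(t)\,,
\]
using the definitions \eqref{eq:defL2} and \eqref{eq:defL3}. Thus \((1-L_2)V = L_3(Z_0)\). Since \(1-L_2\) is assumed invertible, \(V = (1-L_2)^{-1}L_3(Z_0)\). Substituting this back into the Duhamel formula gives exactly \(Z(t) = S(t)Z_0 + L_1(1-L_2)^{-1}L_3(Z_0)(t)\).

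The only delicate point is to make sure these manipulations are legitimate, and that the invertibility of \(1-L_2\) is used on the right space. The space should be one containing \(V\), for instance \(L^2_{t,x}\) as in the sketch of the proof, possibly restricted to a bounded time interval. One must also ensure that the \(V\) built from the solution actually lies in the domain where \((1-L_2)^{-1}\) acts, so that injectivity identifies it.

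There is a further subtlety: \(V\) is real-valued, while \(L_1\) acts on real potentials. So I must check that \(L_2\) maps real functions to real functions, which is immediate from the \(2\Re\) in its definition. Interchanging \(\E\) with the time integral in \(L_1\) is justified by Fubini, using that \(Y(s)\) has a time-independent covariance and \(Z\in L^2_\omega\).

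Under these standing assumptions, the proof is purely algebraic.
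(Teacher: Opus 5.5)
Your proposal is correct and is essentially the paper's own argument: write \(Z = S(t)Z_0 + L_1(V)\) via Duhamel with \(V = \E\big(2\Re\la Y,Z\ra_{\C^4}\big)\), pair with \(Y\) to obtain \(V = L_3(Z_0) + L_2(V)\), invert \(1-L_2\), and substitute back. The Fubini interchange you mention is not needed, since \(L_2\) is defined with the expectation taken outside \(L_1(V)\), so the identity for \(V\) follows from linearity of \(\E\) and \(\Re\la Y,\cdot\ra\) alone.
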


\begin{proof}
Notice that one can write~\(Z\) as the solution to the following fixed point:
\[
Z = S(t)Z_0 - i \int_0^t S(t-s)[w\ast \E \big (2 \Re \la Y(s),Z (s)\ra_{\C^4} \big ) Y(s)]\, ds\,.
\]
Set~\(V(s) =\E \big (2 \Re \la Y(s),Z(s) \ra_{\C^4} \big ) \) and~\(L_1,L_2\) the operators respectively defined in \eqref{eq:defL1} and \eqref{eq:defL2}. 

Then, we get that~\(V\) satisfies the following fixed point
\[
V = L_3(Z_0) + L_2(V)\,,
\]
and that $Z$ fulfills
\[
Z (t)= S(t)Z_0 + L_1(V)(t)\,.
\]
Assuming that~\(1-L_2\) is invertible, we have $V=(1-L_2)^{-1}L_3(Z_0)$ which yields
\[
Z(t) = S(t) Z_0 + L_1(1-L_2)^{-1} L_3(Z_0)(t)\,.
\]
\end{proof}

Consider now the linearization of~\Cref{eq:realDF} around~\(\gamma_f\), that is~\eqref{eq:eqlinQ}, with initial datum~\(Q_0\). To see the equivalence between~\eqref{eq:eqlinZ} and~\eqref{eq:eqlinQ}, notice that if~\(Z\) solves~\eqref{eq:eqlinZ}, then~\(\iota(Z)\), defined as the time-dependent field of operators such that~\(\iota(Z)(t)\) has an integral kernel given by
\begin{equation*}
    \E(| Y(t,x) \rangle \langle Z(t,y)|) + \E(| Z(t,x) \rangle \langle Y(t,y)|)\,,
\end{equation*}
solves~\eqref{eq:eqlinQ}. 

\begin{proposition}\label{stat:reduced-Q-problem}
Let~\(Q\) be the solution to~\eqref{eq:eqlinQ} with initial datum~\(Q_0\) and assume that~\(1-L_2\) is invertible, then, we have
\[
\rho_{Q(t)} = (1-L_2)^{-1} \rho_{S(t) Q_0 S(-t)}\,.
\]
\end{proposition}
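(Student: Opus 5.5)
We write the linearized density-matrix equation \eqref{eq:eqlinQ} in Duhamel form and then identify the density of the Duhamel term with $L_2$.

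\textbf{Duhamel form.} Conjugating by the free flow gives
\[
Q(t) = S(t)Q_0S(-t) - i\int_0^t S(t-s)\,[w*\rho_{Q(s)},\gamma_f]\,S(s-t)\,ds .
\]
The constant $\widetilde m$ in $S$ cancels in the conjugation, so this is the same as using $e^{-it\D}$. Taking densities yields
\[
\rho_Q = \rho_{S(t)Q_0S(-t)} + \widetilde L(\rho_Q),
\]
where
\[
\widetilde L(V)(t) = \rho\Big(-i\int_0^t S(t-s)[w*V(s),\gamma_f]S(s-t)\,ds\Big).
\]
Once we show $\widetilde L = L_2$, the invertibility of $1-L_2$ gives $\rho_Q=(1-L_2)^{-1}\rho_{S(t)Q_0S(-t)}$.

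\textbf{Identifying $\widetilde L$ with $L_2$.} Take a real-valued $V$; this is the relevant class, since $\rho_Q$ is real because $Q$ stays self-adjoint. Set $Z=L_1(V)$. We compute $\iota(Z)$, the operator with kernel $\E|Y\ra\la Z| + \E|Z\ra\la Y|$. Its density is
\[
\E\big(\la Y,Z\ra + \la Z,Y\ra\big) = \E\big(2\Re\la Y,Z\ra_{\C^4}\big) = L_2(V),
\]
so it suffices to show $\iota(L_1(V))(t)$ equals the Duhamel integral defining $\widetilde L(V)$.

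We use $Y(t)=S(t)Y(0)$ and the covariance identity from the proposition in \Cref{subsec:studyEq}:
\[
\E\big(|S(t)Y_0\ra\la S(t)Y_0|\big) = S(t)\gamma_f S(-t) = \gamma_f,
\]
the second equality because $\gamma_f$ commutes with $\D$. With the shorthand $M_s$ for multiplication by the real function $w*V(s)$, we have
\[
\E|Z(t)\ra\la Y(t)| = -i\int_0^t S(t-s)\,M_s\,\E\big(|Y(s)\ra\la Y(s)|\big)\,S(s-t)\,ds = -i\int_0^t S(t-s)\,M_s\gamma_f\,S(s-t)\,ds ,
\]
where we wrote $Y(t) = S(t-s)Y(s)$ so that the conjugation appears. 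Since $M_s$ is self-adjoint and $S$ is unitary, the adjoint term is
\[
\E|Y(t)\ra\la Z(t)| = +i\int_0^t S(t-s)\,\gamma_f M_s\,S(s-t)\,ds .
\]
Adding the two gives $-i\int_0^t S(t-s)[M_s,\gamma_f]S(s-t)\,ds$, which is exactly the Duhamel term. Hence $\widetilde L = L_2$.

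\textbf{Main obstacle.} The difficulty is justifying these manipulations. We must interchange $\E$ with the time integral and take densities of operators that are not trace class, since $\gamma_f$ is only a bounded Fourier multiplier. We would first verify the identity for nice $V$, for example Schwartz in space-time, working at the level of integral kernels, where everything is an absolutely convergent Fourier integral thanks to the decay assumptions on $f$ and $\widehat w$. We would then extend by density, using the continuity of $L_2$ on $L^2_{t,x}$ established later in the paper.
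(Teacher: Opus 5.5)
Your proposal is correct and follows the paper's proof essentially step by step. You write Duhamel's formula for $Q$, take densities to get $\rho_Q = \rho_{S(t)Q_0S(-t)} + \widetilde L_2(\rho_Q)$, and then identify $\widetilde L_2 = L_2$ by computing $\iota(L_1(V))$ via $Y(t) = S(t-s)Y(s)$, $\E\big(|Y(s)\rangle\langle Y(s)|\big) = \gamma_f$ and $\rho[\iota(Z)] = 2\Re\E\langle Y,Z\rangle$.

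Your tracking of the factors $-i$ and $+i$ on the two terms, which combine into the commutator $[w*V(s),\gamma_f]$, is cleaner than the paper's displayed kernel. The paper writes $-i$ in front of both terms and a sum where the commutator is meant.
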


\begin{proof}
Notice that, by Duhamel's principle,~\(Q\) satisfies 
\[
Q(t) = S(t) Q_0 S(-t) - i \int_0^t S(t-s)[w*\rho_Q(s) , \gamma_f] S(s-t) \, ds
\]
such that~\(\rho_Q\) is solution to
\[
\rho_Q(t) = \rho_{S(t) Q_0 S(-t)} - i \int_0^t \rho\Big[S(t-s)[w*\rho_Q(s) , \gamma_f] S(s-t) \Big] \, ds\,,
\]
where~\(\rho[Q] = \rho_Q\). Set
\[
\widetilde L_2 (V) = - i \int_0^t \rho\Big[S(t-s)[w*V, \gamma_f] S(s-t) \Big]\,ds\, .
\]
We now prove that~\(\widetilde L_2 = L_2\). We focus first on
\[
\iota(L_1(V))\,.
\]
By \eqref{eq:defL1}, the integral kernel of~\(\iota(L_1(V))(t)\) is
\[
-i \int_0^t \Big[ \E(| Y(t,x) \rangle \langle S(t-s) [w*V(s) Y(s)](y))|) + \E(| S(t-s) [w*V(s) Y(s)](x) \rangle \langle Y(t,y)|) \Big]\, ds\, .
\]
Using that $| Y(t,x) \rangle=S(t-s)|Y(s,x)\rangle$ and that $S$ is a deterministic propagator, we deduce
\[
\iota(L_1(V))(t) = -i \int_0^t S(t-s) A(s) S(s-t)\, ds\,,
\]
where~\(A(s)\) is the operator whose integral kernel is defined as
\[
\E(| Y(s,x) \rangle \langle [w*V(s)(y) Y(s,y)]|) + \E(| [w*V(s)(x) Y(s,x)] \rangle \langle Y(s,y)|)\,,
\]
that is
\[
w*V(s) \gamma_f  + \gamma_f w* V(s) = [w*V(s), \gamma_f]\,. 
\]
Therefore, we have
\[
\iota(L_1(V)) = - i \int_0^t S(t-s)[w*V(s) , \gamma_f] S(s-t) \, ds\,.
\]
Noticing that~\(\rho[\iota(Z)(t)]= 2\mathrm{Re}\E(\an{Y(t),Z(t)} )\), we finally get
\[
\widetilde L_2(V) = \rho[\iota(L_1(V)(t)] = 2\Re\E(\an{Y(t),L_1(V)}) = L_2(V)\,.
\]
\end{proof}

We now focus on the nonlinear model given by~\Cref{eq:cutoff}.

\begin{proposition}\label{stat:defQLtilde}
Let~\(L_0\) be the Fourier multiplier by~\(\eta\mapsto \frac{|\eta|^{\frac34}}{\an{\eta}^5}\) and 
\begin{align*}
\widetilde L_1(V) = & -i \int_0^t S(t-s) \left[w_0*(L_0 V(s)) Y(s)\right]\, ds\,, \\ \nonumber
\widetilde L_2(V) = & L_0 \left( 2\mathrm{Re}\E(\an{Y,\widetilde L_1(V)})\right)\,, \\ \nonumber
\widetilde L_3 (Z_0) = & L_0 \left( 2\mathrm{Re}\E(\an{Y,S(t)Z_0}) \right)\,, \\ \nonumber
\widetilde Q_1(Z,V) = & -i \int_0^t S(t-s) \left[ w_0*(L_0 V(s)) \Pi Z(s)\right]\, ds\,, \\ \nonumber
\widetilde Q_2(Z,V) = & L_0 \left( 2\mathrm{Re}\E(\an{Y,\widetilde Q_1(Z,V)})\right) \, .
\end{align*}
Then, solving the Cauchy problem associated to~\eqref{eq:cutoff} with initial datum~\(X_0=Y_f(0) + Z_0\) reduces to solving the fixed-point equation
\begin{equation}\label{eq:fixedpointCO}
\begin{cases}
    Z = S(t) Z_0 + \widetilde L_1(V) + \widetilde Q_1 (Z,V)\,, \\
    V = \widetilde L_3(Z_0) + \widetilde L_2 (V) + \widetilde Q_2(Z,V) + L_0\E(|Z|^2)\,.
\end{cases}
\end{equation}
\end{proposition}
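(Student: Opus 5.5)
Write $X = Y + Z$, where $Y=Y_f$ solves \eqref{eq:cutoff}. The plan is to derive the equation satisfied by $Z$, write it in Duhamel form, and introduce the auxiliary density $V$ so that the system closes. The key algebraic input is the factorization of the potential: since $\widehat w(\eta) = \frac{|\eta|^{3/2}}{\an{\eta}^{10}}\widehat w_0(\eta)$, the symbol of $L_0^2$ is exactly $\frac{|\eta|^{3/2}}{\an{\eta}^{10}}$. Hence for any function $g$ we have
\[
w*g = w_0*(L_0^2 g) = w_0*\big(L_0(L_0 g)\big).
\]

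First, expand $\E[|X|^2] = \E[|Y|^2] + 2\Re\E\an{Y,Z} + \E|Z|^2$. As computed in the proof that $Y_f$ solves \eqref{eq:DF}, $\E[|Y|^2]=\|f\|_{L^2}^2$ is constant, so $w*\E[|Y|^2]=\widetilde m$. Using $\Pi X = Y + \Pi Z$ (because $\Pi Y = Y$) and the fact that $Y$ solves \eqref{eq:cutoff}, subtraction gives
\[
i\partial_t Z = (\D+\widetilde m)Z + \big(w*\widetilde V\big)(Y+\Pi Z),\qquad \widetilde V := 2\Re\E\an{Y,Z} + \E|Z|^2 .
\]
There is one subtlety here: the term $\widetilde m\,\Pi Z$ appears in place of $\widetilde m Z$. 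I will handle it by reading \eqref{eq:cutoff} with the constant part of the potential acting on $X$, consistently with $Y$ being a solution and with $S(t)$ including $\widetilde m$. If the authors intend $\Pi$ to act literally, the discrepancy $\widetilde m(1-\Pi)Z$ is a linear term that can be absorbed into the free propagator on the high frequencies; I expect this bookkeeping to be the only delicate point.

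Second, set $V := L_0\widetilde V$. Then $w*\widetilde V = w_0*(L_0 V)$, and Duhamel's formula with $S(t)=e^{-it(\D+\widetilde m)}$ gives
\[
Z = S(t)Z_0 + \widetilde L_1(V) + \widetilde Q_1(Z,V),
\]
which is the first line of \eqref{eq:fixedpointCO}. Next, substitute this expression for $Z$ into $V = L_0\big(2\Re\E\an{Y,Z}\big) + L_0\E|Z|^2$. Using linearity of $L_0$, $\Re$ and $\E$, together with the definitions of $\widetilde L_3$, $\widetilde L_2$ and $\widetilde Q_2$, this yields the second line of \eqref{eq:fixedpointCO}.

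Conversely, let $(Z,V)$ solve \eqref{eq:fixedpointCO}. Set $V' := L_0\big(2\Re\E\an{Y,Z} + \E|Z|^2\big)$. Substituting the first equation into this definition and comparing with the second equation shows $V = V'$. Then the first equation is exactly the Duhamel formulation of the $Z$-equation, so $X = Y+Z$ solves \eqref{eq:cutoff} with $X|_{t=0}=Y_f(0)+Z_0$. This part mirrors \Cref{stat:reduced-Z-problem}, and the argument is purely algebraic.
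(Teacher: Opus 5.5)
Your overall route is the paper's: subtract the equation for $Y$, write the $Z$-equation in Duhamel form, and substitute it into the definition of $V$. Your normalization $V=L_0\big(2\Re\E\an{Y,Z}+\E|Z|^2\big)$, based on $w*g=w_0*(L_0^2g)$, is the one that makes the formulas consistent. The paper's proof states $V$ without the factor $L_0$ but then computes with your $V$. Your converse direction, which the paper does not write out, is also correct.

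The step that needs repair is the one you flag yourself, the term $\widetilde m\,\Pi Z$. Neither of your two fixes works as stated. Reading \eqref{eq:cutoff} with the constant part of the potential acting on $X$ proves the result for a different equation. Absorbing $\widetilde m(1-\Pi)Z$ into the free flow would replace $S(t)$ by $e^{-it(\D+\widetilde m\Pi)}$. Since the Duhamel integrands $[w_0*(L_0V)](Y+\Pi Z)$ are not frequency-localized in the ball, you would then not recover \eqref{eq:fixedpointCO} with $S(t)$. Concretely, for the literal equation with $\widetilde m\neq 0$, the Duhamel formula for $Z$ would contain the extra term $i\widetilde m\int_0^t S(t-s)(1-\Pi)Z(s)\,ds$.

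The missing observation is that there is no discrepancy at all. The hypothesis $\widehat w(\eta)=\frac{|\eta|^{3/2}}{\an{\eta}^{10}}\widehat w_0(\eta)$ with $\widehat w_0\in L^\infty$ gives $\widehat w(\eta)=O(|\eta|^{3/2})$, so $\widehat w(0)=0$. Hence $\int w\,dx=(2\pi)^{3/2}\widehat w(0)=0$ and $\widetilde m=0$. The constant density $\E|Y|^2$ generates no potential, $S(t)=e^{-it\D}$, and $w*\E|X|^2=w*\big(2\Re\E\an{Y,Z}+\E|Z|^2\big)$ exactly. With this one line, both directions of your argument hold for \eqref{eq:cutoff} as written. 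The paper's equation for $Z$, with $(\D+m)Z$ in place of $\widetilde m\,\Pi Z$, tacitly relies on the same fact.
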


\begin{proof} Equation~\eqref{eq:cutoff} linearized around~\(Y\) reads
\begin{equation}\label{eq:cutoffZ}
i\partial_t Z = (\D+m)Z + (w*\E(\mathrm{Re}(\an{Y,Z})+ |Z|^2))(Y + \Pi Z)\,.
\end{equation}

We introduce the function 
\[
V = \E \big (2 \Re \la Y,Z \ra_{\C^4} + \vert Z \vert^2 \big)
\]
so that $w_0 \ast L_0 V$ is the mean-field potential induced by the perturbation. Recalling $S(t)$ defined in \eqref{eq:mass-Dirac-propagator}, the Duhamel formula yields
\begin{equation}
\label{eq:Duhamel_Z}
Z = S(t) Z_0 -i \int_0^t S(t-\tau) \big [w_0 \ast L_0 V (\tau) \big ] (Y(\tau)+\Pi Z(\tau))\, d\tau\,,
\end{equation}
which implies
\begin{multline*}
V = 2 L_0 \Re \E \big ( \la Y, S(t) Z_0 \ra_{\C^4} \big ) - 2L_0\Re \E \Big ( \la Y,i \int_0^t S(t-\tau) [w_0 \ast L_0 V] Y\,d\tau\ra_{\C^4}  \Big ) + 
\\
- 2L_0\Re \E \Big ( \la Y, i\int_0^t S(t-\tau) [w_0 \ast L_0 V] \Pi Z\,d\tau \ra_{\C^4} \Big ) + L_0\E \big ( \vert Z \vert ^2 \big)\,.
\end{multline*}\par

This allows us to rewrite the initial Cauchy problem as a fixed-point problem for the pair $(Z,V)$:
\begin{equation}\label{eq:fixed-point}
    \begin{pmatrix}
    Z \\ V
\end{pmatrix}
= C_{Z_0}+\widetilde L \begin{pmatrix}
    Z\\V
\end{pmatrix} + \widetilde Q \begin{pmatrix}
    Z\\V
\end{pmatrix}\,
\end{equation}
with
\[
C_{Z_0} = \begin{pmatrix}
    S(t) Z_0\\
    2L_0 \Re \E \big ( \la Y, S(t) Z_0 \ra_{\C^4} \big )
\end{pmatrix}\,, 
\]
\begin{equation}
    \label{eq:linear}
    \widetilde L \begin{pmatrix}
    Z \\ V
\end{pmatrix} = \begin{pmatrix}
    -i \int_0^t S(t-\tau) \big [w_0 \ast L_0 V (\tau) \big ]Y d\tau \\
    - 2L_0 \Re \E \Big (i \la Y, \int_0^t S(t-\tau) [w_0 \ast L_0 V] Yd\tau\ra_{\C^4}  \Big )
\end{pmatrix}\,,
\end{equation}
and 
\[
\widetilde Q \begin{pmatrix}
    Z \\ V
\end{pmatrix} = \begin{pmatrix}-i \int_0^t S(t-\tau) \big [w_0 \ast L_0 V (\tau) \big ] \Pi Z d\tau\\ 2L_0 \Re \E \Big (i \la Y, \int_0^t S(t-\tau) [w_0 \ast L_0 V] \Pi Zd\tau \ra_{\C^4} \Big ) + L_0 \E \big ( \vert Z \vert ^2 \big)
 \end{pmatrix}\,.
\]
\end{proof}

\begin{remark}
Finally, if~\(1-\widetilde L_2\) is invertible, so is~\(1-\widetilde L\) and the fixed-point equation in \eqref{eq:fixed-point} can be easily rewritten as
\begin{equation}\label{eq:fixed-point-CLQ}
    \begin{pmatrix}
    Z \\ V
\end{pmatrix} = (1 -\widetilde L)^{-1} \Bigg (  C_{Z_0} + \widetilde Q \begin{pmatrix}
    Z \\ V
\end{pmatrix} \Bigg )\,.
\end{equation}
\end{remark}

\section{Exact Computations of the Linear Response}

This section is devoted to preparatory computations for the study of the operators~\(L_1,L_2\) and~\(L_3\) previously defined in \eqref{eq:defL1}--\eqref{eq:defL3}. More precisely, we prove that~\(L_2\) is a Fourier multiplier by a function that we explicitly compute; we prove that one can bound~\(L_1\) by estimating the composition of the adjoint of~\(L_3\) with a Fourier multiplier; we explain how a~\(TT^*\) argument allows to estimate~\(L_1\) and~\(L_3\).\par


\subsection{Exact Computation of \texorpdfstring{\(L_2\)}{L2}}\label{subsec:exactL2}

\begin{proposition}
\label{prop:L_2}
    The operator $L_2$ is a multiplication operator given in Fourier space, for any $t \in \R$ and $\eta \in \R^3$, by
    \begin{multline}
    \label{eq:L_2(V)}
    \widehat{L_2(V)} (t, \eta)\\
    =- i \widehat w (\eta) \int_0^t \widehat V(\tau, \eta) \int \Tr \Big [  \Big(  f(\xi)f(\xi)^* - f(\eta + \xi)f(\eta + \xi)^* \Big ) e^{i(t-\tau) \alpha \cdot \xi} e^{- i (t-\tau) \alpha \cdot (\xi + \eta)} \Big ]\,d\xi\, d \tau\,.
    \end{multline}
\end{proposition}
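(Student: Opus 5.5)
The plan is to compute directly from the definitions \eqref{eq:defL1}--\eqref{eq:defL2}, using the representation \eqref{eq:Yf} of $Y$ together with the covariance rule \eqref{formula_dW}. Alternatively, I can use the identity $L_2(V)=\rho[\iota(L_1(V))]$ established in the proof of \Cref{stat:reduced-Q-problem}. That identity gives
\[
L_2(V)(t)=-i\int_0^t \rho\Big[S(t-s)[w*V(s),\gamma_f]S(s-t)\Big]\,ds .
\]
This reduces the problem to computing the density of a conjugated commutator, which is cleaner than manipulating the Gaussian integrals. I will follow this route, and then cross-check the result against the random-field formula.

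\emph{First step: kernel in Fourier variables.} Write $U=w*V(s)$, a multiplication operator. In Fourier variables it acts as convolution, with kernel $(2\pi)^{-3/2}\widehat U(\xi-\xi')=\widehat w(\xi-\xi')\widehat V(s,\xi-\xi')$; with the paper's normalization, the factor $(2\pi)^{3/2}$ coming from $\widehat{w*V}$ cancels this prefactor. The operator $\gamma_f$ is the multiplier by $g(\xi):=f(\xi)f(\xi)^*$, and $S(\tau)$ is the multiplier by $e^{-i\tau(\alpha\cdot\xi+\widetilde m)}$; the scalar $\widetilde m$ cancels in the conjugation. Writing $\tau=t-s$, the Fourier kernel of $S(\tau)[U,\gamma_f]S(-\tau)$ is therefore
\[
K(\xi,\xi') = c\,\widehat U(\xi-\xi')\, e^{-i\tau\alpha\cdot\xi}\big(g(\xi')-g(\xi)\big)e^{i\tau\alpha\cdot\xi'} .
\]

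\emph{Second step: density of an operator with known Fourier kernel.} For an operator with Fourier kernel $K$, the density satisfies
\[
\widehat{\rho}(\eta)=(2\pi)^{-3/2}\int \Tr K(\xi+\eta,\xi)\,d\xi ,
\]
which I check by direct computation with the inverse Fourier transform. Substituting $\xi\to\xi+\eta$, $\xi'\to\xi$ and using cyclicity of the trace gives
\[
\Tr\Big[(g(\xi)-g(\xi+\eta))\,e^{i\tau\alpha\cdot\xi}e^{-i\tau\alpha\cdot(\xi+\eta)}\Big]
\]
times $\widehat w(\eta)\widehat V(s,\eta)$. Integrating in $\xi$ and in $s\in[0,t]$, with the prefactor $-i$, yields \eqref{eq:L_2(V)}; the paper's variable $\tau$ is my $s$, with $t-\tau$ being the propagation time.

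\emph{Third step: normalization and the use of commutativity.} The order of the exponentials inside the trace has to be matched with the statement, and the hypothesis that $g$ commutes with $\alpha\cdot\xi$ allows the exponentials to be moved past $g$. The evenness of $g$ is not needed here. The main obstacle is bookkeeping: tracking the $(2\pi)^{3/2}$ factors, the sign conventions in the kernel, and which side each exponential sits on. I would settle these by testing the formula on a plane-wave $V$, for which everything is explicit. Finally, because $\widehat{L_2(V)}(t,\eta)$ involves only $\widehat V(\cdot,\eta)$ at the same frequency $\eta$, $L_2$ is diagonal in $\eta$, which is the sense of the statement that $L_2$ acts as a multiplier in Fourier space.
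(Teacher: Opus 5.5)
Your route is correct and genuinely different from the paper's. The paper never passes through density matrices. It computes $\E\langle Y,L_1(V)\rangle$ directly from \eqref{eq:Yf} and \eqref{formula_dW}, and uses $[ff^*,e^{it\boldsymbol\alpha\cdot\xi}]=0$ to merge the phases into the kernel $M(t-\tau,\eta)$. It then builds $2\,\mathrm{Re}$ by hand, using the reality of $w$ and $V$, $\widehat{\overline\varphi}(\eta)=\overline{\widehat\varphi(-\eta)}$, cyclicity, and the shift $\xi\leftarrow\xi-\eta$ in $\overline{M(t,-\eta)}$, to produce the difference $f(\xi)f(\xi)^*-f(\xi+\eta)f(\xi+\eta)^*$.

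You instead import $L_2=\widetilde L_2$ from the proof of \Cref{stat:reduced-Q-problem}, where the symmetrization is already built into $\iota$. The difference then falls out of the commutator $[w*V,\gamma_f]$ in one line. Within your Step 2 the order of the exponentials follows from cyclicity alone. The commutation hypothesis, and the reality of $w*V$, enter only through that identity, via the time-independence of the covariance of $Y(s)$.

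What each approach buys: yours is shorter and makes the antisymmetric structure transparent, with no circularity since \Cref{stat:reduced-Q-problem} is proved independently. The paper's direct computation has the advantage of producing the kernel $M$, which is reused for $\Theta\Theta^*$ in \Cref{prop:TTstar} and for $R(V)$ in \Cref{lem:TT*}. If you re-derive the identity, note that conjugating the $-i$ in the bra gives the two terms opposite signs; that is what yields the commutator, although the display in that proof shows a $+$.

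On the bookkeeping you flagged, you should not claim exact agreement with \eqref{eq:L_2(V)}.
\begin{itemize}
\item Your Step 2 carries a factor $(2\pi)^{-3/2}$ that nothing cancels. With $\gamma_f$ literally the multiplier by $ff^*$, you get $(2\pi)^{-3/2}$ times \eqref{eq:L_2(V)}.
\item Under \eqref{formula_dW}, however, the covariance of $Y$ has kernel $\int e^{i(x-y)\cdot\xi}f(\xi)f(\xi)^*\,d\xi$. With the paper's unitary Fourier convention it is therefore the multiplier by $(2\pi)^3ff^*$, and it is this covariance that actually appears in $L_2=\widetilde L_2$.
\item With the correct covariance, your computation gives $(2\pi)^{3/2}$ times \eqref{eq:L_2(V)}, as does the direct computation once all factors are kept.
\end{itemize}
The paper loses exactly this factor when it writes $\mathcal F_x\big((w*V)e^{ix\cdot\xi}\big)(\eta)=\widehat w(\eta-\xi)\widehat V(\tau,\eta-\xi)$ instead of $(2\pi)^{3/2}\widehat w(\eta-\xi)\widehat V(\tau,\eta-\xi)$. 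The constant is harmless for everything downstream, but the formula holds only up to this normalization. You should either fix conventions so that $\gamma_f$ is the covariance of $Y$, or state the result modulo the constant.
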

\begin{proof}
Recalling the definition of $S(t)$ in \eqref{eq:mass-Dirac-propagator} and of $Y$ in \eqref{eq:Yf}, we have 
\begin{align*}
  &\int_0^t S(t-\tau) \big [w \ast V (\tau) \big ]Y\,d\tau  \\
  &\hspace{1cm} = \frac{1}{\left(2\pi\right)^{3/2}}\int_0^t \iint e^{i x \cdot \eta} e^{-i(t-\tau) (\boldsymbol\alpha \cdot \eta + \widetilde m)} e^{-i \tau (\boldsymbol\alpha \cdot \eta + \widetilde m)}\mathcal F_x \big ( (w \ast V(\tau)) e^{i x\cdot \xi} \big )(\eta)\, f(\xi)\, d\eta\, dW(\xi)\, d \tau\\
  &\hspace{1cm} = \frac{1}{\left(2\pi\right)^{3/2}}\int_0^t \iint e^{i x \cdot \eta} e^{-i(t-\tau) (\boldsymbol\alpha \cdot \xi + \widetilde m)} e^{-i \tau (\boldsymbol\alpha \cdot \xi + \widetilde m)} \widehat w(\eta - \xi) \widehat V(\tau, \eta-\xi)\, f(\xi)\,d\eta\,  dW(\xi)\, d \tau\\
  &\hspace{1cm} = \frac{1}{\left(2\pi\right)^{3/2}}\int_0^t \iint  e^{-i(t-\tau) (\boldsymbol\alpha \cdot (\xi + \eta) + \widetilde m)} e^{-i \tau (\alpha \cdot \xi + \widetilde m)} e^{ix\cdot (\eta + \xi)} \widehat w(\eta) \widehat V(\tau, \eta)\, f(\xi)\, d\eta\, dW(\xi)\, d\tau\,.
\end{align*}
In the last equality, we have performed the change of variable~\(\eta \leftarrow \eta - \xi\). Notice that here we use $\widehat g$ to denote the Fourier transform of a function $g$ with respect to the variable $x$. Then, recalling \eqref{formula_dW}, we get
\begin{multline*}
-i \E \Big ( \la Y, \int_0^t S(t-\tau) [w \ast V] Yd\tau\ra_{\C^4}  \Big ) = \E(\la Y, L_1(V) \ra)\\
= -\frac{i}{\left(2\pi\right)^{3/2}} \int_0^t \iiint \Tr_{\C^4} \Big [ \Big ( e^{-it (\boldsymbol\alpha \cdot \xi' + \widetilde m)} e^{i x \cdot \xi'} f(\xi') \Big )^* \Big ( e^{-i(t-\tau) (\boldsymbol\alpha \cdot (\eta + \xi) + \widetilde m)} e^{-i \tau(\boldsymbol\alpha \cdot \xi + \widetilde m)} f(\xi)\Big) \Big ]\\
\times
e^{ix\cdot(\eta + \xi)} \widehat w(\eta) \widehat V (\tau,\eta) d\eta\, \delta(\xi - \xi')\, d\xi'\, d\xi\, d \tau\,,
\end{multline*}
which equals to
\[
-\frac{i}{\left(2\pi\right)^{3/2}} \int_0^t \iint \Tr_{\C^4} \Big [ f(\xi) f(\xi)^* e^{it \boldsymbol\alpha \cdot \xi}e^{-i(t-\tau) \boldsymbol\alpha \cdot (\eta + \xi)} e^{-i \tau\boldsymbol\alpha \cdot \xi} ] e^{i x \cdot \eta} \widehat w(\eta) \widehat V(\tau, \eta)\, d\eta\, d\xi\, d \tau\,.
\]
Using that $[ f(\xi)f^*(\xi), e^{it \boldsymbol\alpha \cdot \xi}] = 0$, we can write 
\begin{align*}
   -i \E \Big ( \la Y, \int_0^t S(t-\tau) [w \ast V] Y\, d\tau\ra_{\C^4}  \Big ) = -\frac{i}{\left(2\pi\right)^{3/2}} \int_0^t \int e^{i x \cdot \eta} \widehat w (\eta) \widehat V(\tau, \eta) M(t-\tau, \eta) \, d\eta\, d\tau\,,
\end{align*}
where
\[
M(t,\eta)\coloneqq \int \Tr_{\C^4} \Big [ f(\xi) f^*(\xi) e^{it \boldsymbol\alpha \cdot \xi} e^{-it \boldsymbol\alpha \cdot (\eta + \xi)}\Big ] \,d\xi\,.
\]
To compute $L_2(V)$, we recall that
\begin{equation*}
    L_2(V)=2\Re\E\left[\la Y,L_1\left(V\right)\ra_{\C^4}\right] = \E\left[\la Y,L_1\left(V\right)\ra_{\C^4}\right]+\overline{\E\left[\la Y,L_1\left(V\right)\ra_{\C^4}\right]}\,.
\end{equation*}
Therefore, using that $\overline{\Tr\left(A\right)}=\Tr\left(A^*\right)$, for any matrix $A$, and that $\widehat{\overline{\varphi}}\left(\eta\right)=\overline{\widehat{\varphi}\left(-\eta\right)}$, for any complex function $\varphi$, we get
\begin{multline*}
   \widehat{L_2(V)}(t,\eta) = -\frac{i}{\left(2\pi\right)^{3/2}} \int_0^t \int e^{i x \cdot \eta} \widehat w (\eta) \widehat V(\tau, \eta) M(t-\tau, \eta)\, d\eta\, d\tau \\
   +\frac{i}{\left(2\pi\right)^{3/2}} \int_0^t \int e^{i x \cdot \eta} \overline{\widehat w (-\eta)} \overline{\widehat V(\tau, -\eta) }\overline{M(t-\tau, -\eta)}\, d\eta\, d\tau\,, 
\end{multline*}
which implies
\[
\widehat{L_2(V)}(t,\eta) = -\frac{i}{\left(2\pi\right)^{3/2}} \int_0^t \int e^{i x \cdot \eta} \widehat w (\eta) \widehat V(\tau, \eta) (M(t-\tau, \eta) - \overline{M(t-\tau, -\eta)})\, d\eta\, d\tau\,,
\]
since~\(w\) and~\(V\) are real-valued. Using the cyclicity of the trace, we compute
\begin{align*}
    \overline{M(t, -\eta)} = \int \Tr \Big [ f(\xi) f^*(\xi) e^{it \boldsymbol\alpha \cdot (\xi - \eta)} e^{-it \boldsymbol\alpha \cdot \xi}\Big ] d\xi
\end{align*}
so that, after the change of variables~\(\xi \leftarrow \xi-\eta\), we get
\begin{align*}
    & i \left(2\pi\right)^{3/2}\widehat{L_2(V)}(t,\eta)\\
    &= \int e^{i x \cdot \eta} \widehat w (\eta) \widehat V(\tau, \eta) M(t-\tau, \eta) \,d\eta\, d\tau + i \int_0^t  \int e^{-i x \cdot \eta} \widehat w (-\eta) \widehat V(\tau, -\eta) \overline{M(t-\tau, \eta)}\, d\eta\, d\tau \\
    & =\int_0^t \iint e^{i x\cdot \eta} \widehat w (\eta) \widehat V(\tau, \eta) \Tr \Big [  \Big(  f(\xi)f(\xi)^* - f(\eta + \xi)f(\eta + \xi)^* \Big ) e^{i(t-\tau) \alpha \cdot \xi} e^{- i (t-\tau) \alpha \cdot (\xi + \eta)} \Big ]\, d\xi\, d \eta\, d \tau,
\end{align*}
which concludes the proof.
\end{proof}
\begin{corollary}
\label{rmk:L_2}
    Recall that~\(g\), defined as $g(\xi)= f(\xi) f^*(\xi)$, is even by assumption. Therefore, the Fourier symbol of $L_2(V)$ can be rewritten as
    \begin{equation}\label{eq:L2-fourier-symbol}
        \widehat{L_2(V)} (t,\eta) = \int_0^t F(t-\tau,\eta) \widehat V(\tau, \eta)\,d\tau\,,
    \end{equation}
    where
    \begin{equation}\label{eq:L2-F-kernel}
        F(t,\eta) = \widehat w(\eta) \int \Tr \left( g^-_\eta(\xi) (\xi\wedge \eta)\cdot A \boldsymbol\alpha\right) \frac{\sin(t|\xi|)}{|\xi|} \frac{\sin(t|\eta+\xi|)}{|\eta+\xi|}\,d\xi\,,
    \end{equation}
    with~\(g^-_\eta(\xi) = g(\xi) - g(\xi + \eta)\) and 
    \begin{equation}\label{eq:A-matrix}
        A = \begin{pmatrix}
        0& \mathbbm 1_{\C^2}\\
        \mathbbm 1_{\C^2} & 0 
        \end{pmatrix}\,.
    \end{equation}
\end{corollary}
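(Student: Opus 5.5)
The plan is to start from the formula of \Cref{prop:L_2} and expand the two matrix exponentials explicitly, using the algebra of the $\alpha_j$. Since $(\boldsymbol\alpha\cdot a)^2=|a|^2\mathbbm 1$ for every $a\in\R^3$, we have
\[
e^{it\boldsymbol\alpha\cdot a}=\cos(t|a|)\,\mathbbm 1+i\,\frac{\sin(t|a|)}{|a|}\,\boldsymbol\alpha\cdot a .
\]
Applying this with $a=\xi$ and with the conjugate factor for $a=\xi+\eta$ expands $\Tr\big[g^-_\eta(\xi)e^{it\boldsymbol\alpha\cdot\xi}e^{-it\boldsymbol\alpha\cdot(\xi+\eta)}\big]$ into four pieces:
\begin{itemize}
\item a $\cos\cos$ piece, proportional to $\Tr g^-_\eta(\xi)$;
\item two cross pieces, $i\frac{\sin(t|\xi|)}{|\xi|}\cos(t|\xi+\eta|)\Tr(g^-_\eta\,\boldsymbol\alpha\cdot\xi)$ and $-i\cos(t|\xi|)\frac{\sin(t|\xi+\eta|)}{|\xi+\eta|}\Tr(g^-_\eta\,\boldsymbol\alpha\cdot(\xi+\eta))$;
\item a $\sin\sin$ piece involving $\Tr\big(g^-_\eta(\boldsymbol\alpha\cdot\xi)(\boldsymbol\alpha\cdot(\xi+\eta))\big)$.
\end{itemize}
For the last piece I will use the Clifford identity $\alpha_j\alpha_k=\delta_{jk}\mathbbm 1+i\varepsilon_{jkl}\Sigma_l$, with $\Sigma_l=\mathrm{diag}(\sigma_l,\sigma_l)=A\alpha_l$. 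This is checked directly from the block form of $\alpha_j$ and the Pauli relations $\sigma_j\sigma_k=\delta_{jk}+i\varepsilon_{jkl}\sigma_l$. It gives
\[
(\boldsymbol\alpha\cdot\xi)(\boldsymbol\alpha\cdot(\xi+\eta))=\xi\cdot(\xi+\eta)\,\mathbbm 1+i\,(\xi\wedge\eta)\cdot A\boldsymbol\alpha .
\]

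The key step is a symmetry argument in the $\xi$-integral. I will use the involution $\xi\mapsto-\xi-\eta$, which exchanges $|\xi|$ and $|\xi+\eta|$. Because $g$ is even, it maps $g^-_\eta(\xi)$ to $g(\xi+\eta)-g(\xi)=-g^-_\eta(\xi)$, so the weight is odd under it. Consequently:
\begin{itemize}
\item The terms whose remaining factor is symmetric, namely $\cos(t|\xi|)\cos(t|\xi+\eta|)\Tr g^-_\eta$ and $\xi\cdot(\xi+\eta)\frac{\sin\sin}{|\xi||\xi+\eta|}\Tr g^-_\eta$, integrate to zero.
\item The two cross terms are mapped onto each other with opposite signs, using $\boldsymbol\alpha\cdot\xi\mapsto-\boldsymbol\alpha\cdot(\xi+\eta)$, so their sum integrates to zero.
\item The remaining term $\frac{\sin(t|\xi|)\sin(t|\xi+\eta|)}{|\xi||\xi+\eta|}\,i\,\Tr\big(g^-_\eta(\xi\wedge\eta)\cdot A\boldsymbol\alpha\big)$ is invariant, because both $(\xi\wedge\eta)$ and $g^-_\eta$ change sign, and it survives.
\end{itemize}
Since $\xi\wedge(\xi+\eta)=\xi\wedge\eta$, this is exactly the integrand in \eqref{eq:L2-F-kernel}.

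Finally I collect the constants. The factor $i$ from the Clifford identity, the factor $i\cdot(-i)$ from the sine coefficients, the prefactor $-i\widehat w(\eta)$ of \eqref{eq:L_2(V)}, and the Fourier normalization are combined, and the result is written as the convolution in time \eqref{eq:L2-fourier-symbol} with $t$ replaced by $t-\tau$. A consistent choice of orientation for $\xi\wedge\eta$ absorbs any residual sign.

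I expect the main obstacle to be bookkeeping rather than anything conceptual. The delicate points are getting the signs right in the cross-term cancellation, and verifying the identity $\Sigma_l=A\alpha_l$ together with the orientation convention for $\wedge$. The cancellation must be done on the sum of the two cross terms, not on each term separately. The symmetry step also needs absolute integrability to justify the change of variables, and this follows from the decay of $f$.
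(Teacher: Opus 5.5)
Your proposal is correct and is essentially the paper's argument. The paper splits the product as $\frac12([\cdot,\cdot]+\{\cdot,\cdot\})$ and kills the whole anticommutator at once by the same involution $\xi\mapsto-\xi-\eta$ together with the evenness of $g$; that anticommutator is precisely your $\cos\cos$, cross and $\xi\cdot(\xi+\eta)$ pieces. It then evaluates the commutator with the same Pauli/Clifford identity. One small correction: there is no residual sign to absorb by an orientation convention, since $\wedge$ is fixed by $\varepsilon_{jkl}$ in the Pauli relation and the constants combine exactly as $(-i)\cdot i=1$ starting from \eqref{eq:L_2(V)}.
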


We shall use the following formulae.

\begin{lemma}\label{lemma:exp}
For any $\tau\in\R$ and $\xi,\eta\in\R^3$, one has:
    \begin{enumerate}
        \item $e^{i\tau \boldsymbol\alpha\cdot\xi} = \cos (\tau \vert \xi \vert) \mathbbm 1_{\C^4} + i \sin (\tau \vert \xi \vert ) \frac{\boldsymbol\alpha\cdot\xi}{\vert \xi \vert}\,$;
        \item\label{ass:exponential-adjoint}
        $\big ( e^{i\tau \xi \cdot \alpha} \big)^* = e^{-i\tau \xi \cdot \alpha}\,$;
        \item
        $\Big [ e^{i \tau  \boldsymbol\alpha\cdot\xi} , e^{-i\tau \boldsymbol\alpha\cdot(\eta +\xi)} \Big] = 2i \frac{ \sin (\tau \vert \xi \vert )}{\vert \xi \vert}  \frac{ \sin (\tau \vert \eta + \xi \vert )}{\vert \eta + \xi \vert} ( \xi \wedge \eta )\cdot A\boldsymbol\alpha$, with $A$ defined in \eqref{eq:A-matrix};
        \item \begin{multline*}
            \{e^{i\tau \boldsymbol\alpha\cdot\xi},e^{-i\tau\boldsymbol\alpha\cdot(\eta+\xi)}\} = 2\cos\left(\tau\left|\xi\right|\right)\cos\left(\tau\left|\xi+\eta\right|\right)-2i\cos\left(\tau\left|\xi\right|\right)\sin\left(\tau\left|\xi+\eta\right|\right)\frac{\boldsymbol\alpha\cdot\left(\xi+\eta\right)}{\left|\xi+\eta\right|}\\
            +2i\cos\left(\tau\left|\xi+\eta\right|\right)\sin\left(\tau\left|\xi\right|\right)\frac{\boldsymbol\alpha\cdot\xi}{\left|\xi\right|}+2\sin\left(\tau\left|\xi\right|\right)\sin\left(\tau\left|\xi+\eta\right|\right)\frac{\left|\xi\right|^2+\xi\cdot\eta}{\left|\xi\right|\left|\xi+\eta\right|}\,.
        \end{multline*}
    \end{enumerate}
\end{lemma}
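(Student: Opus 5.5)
The plan is to reduce everything to the Clifford algebra relations satisfied by the matrices $\alpha_j$. The first step is to record
\[
\alpha_j\alpha_k+\alpha_k\alpha_j = 2\delta_{jk}\mathbbm 1_{\C^4}\,,
\]
which follows from the block form of $\alpha_j$ together with the Pauli relations $\sigma_j\sigma_k+\sigma_k\sigma_j=2\delta_{jk}\mathbbm 1_{\C^2}$. The second step is the product identity
\[
(\boldsymbol\alpha\cdot\xi)(\boldsymbol\alpha\cdot\zeta) = (\xi\cdot\zeta)\,\mathbbm 1_{\C^4} + i(\xi\wedge\zeta)\cdot A\boldsymbol\alpha\,,
\]
for $\xi,\zeta\in\R^3$. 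To prove it, I use $\sigma_j\sigma_k=\delta_{jk}+i\varepsilon_{jkl}\sigma_l$ and the block computation
\[
\alpha_j\alpha_k=\begin{pmatrix}\sigma_j\sigma_k&0\\0&\sigma_j\sigma_k\end{pmatrix},
\]
and then observe that
\[
A\alpha_l=\begin{pmatrix}\sigma_l&0\\0&\sigma_l\end{pmatrix}.
\]
As consequences, $(\boldsymbol\alpha\cdot\xi)^2=|\xi|^2\mathbbm 1_{\C^4}$, the commutator is $[\boldsymbol\alpha\cdot\xi,\boldsymbol\alpha\cdot\zeta]=2i(\xi\wedge\zeta)\cdot A\boldsymbol\alpha$, and the anticommutator is $\{\boldsymbol\alpha\cdot\xi,\boldsymbol\alpha\cdot\zeta\}=2\xi\cdot\zeta\,\mathbbm 1_{\C^4}$.

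Item (1) then follows by splitting the exponential series of $e^{i\tau\boldsymbol\alpha\cdot\xi}$ into even and odd powers and using $(\boldsymbol\alpha\cdot\xi)^{2k}=|\xi|^{2k}$. For $\xi=0$ the factor $\sin(\tau|\xi|)/|\xi|$ is understood as its limit $\tau$. Item (2) is immediate because $\boldsymbol\alpha\cdot\xi$ is Hermitian, the $\alpha_j$ being Hermitian and $\xi$ real, so that $(e^{iM})^*=e^{-iM^*}=e^{-iM}$.

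For items (3) and (4), I set $\zeta=\xi+\eta$ and apply (1) to both factors, writing $c_1,s_1$ for $\cos(\tau|\xi|),\sin(\tau|\xi|)$ and $c_2,s_2$ for $\cos(\tau|\zeta|),\sin(\tau|\zeta|)$. This gives
\[
e^{i\tau\boldsymbol\alpha\cdot\xi}=c_1+is_1\frac{\boldsymbol\alpha\cdot\xi}{|\xi|}\,,\qquad e^{-i\tau\boldsymbol\alpha\cdot\zeta}=c_2-is_2\frac{\boldsymbol\alpha\cdot\zeta}{|\zeta|}\,.
\]
In the commutator, every term involving a scalar factor vanishes. Only the $s_1s_2$ term survives, and its coefficient is $(i)(-i)/(|\xi||\zeta|)$ times $[\boldsymbol\alpha\cdot\xi,\boldsymbol\alpha\cdot\zeta]$. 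Since $\xi\wedge\zeta=\xi\wedge\eta$, this yields exactly the stated form of (3).

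In the anticommutator, each mixed term appears twice, and the $s_1s_2$ term contributes $\{\boldsymbol\alpha\cdot\xi,\boldsymbol\alpha\cdot\zeta\}=2\xi\cdot\zeta=2(|\xi|^2+\xi\cdot\eta)$. This gives (4) directly.

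There is no real obstacle here. The only step needing care is the product identity, specifically checking that the antisymmetric part is $(\xi\wedge\zeta)\cdot A\boldsymbol\alpha$ with the correct sign and factor. I would verify this on a single case, for instance $\alpha_1\alpha_2=i\,A\alpha_3$, which holds because $\sigma_1\sigma_2=i\sigma_3$.
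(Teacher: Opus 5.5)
Your proposal is correct and follows the paper's argument. Item (1) comes from $(\boldsymbol\alpha\cdot\xi)^2=|\xi|^2$, and items (3)--(4) come from expanding both exponentials via (1) and reducing to the commutator and anticommutator of $\boldsymbol\alpha\cdot\xi$ and $\boldsymbol\alpha\cdot(\xi+\eta)$, which you compute through $\sigma_j\sigma_k=\delta_{jk}+i\varepsilon_{jkl}\sigma_l$ and $A\alpha_l=\mathrm{diag}(\sigma_l,\sigma_l)$. Packaging both brackets into the single identity $(\boldsymbol\alpha\cdot\xi)(\boldsymbol\alpha\cdot\zeta)=(\xi\cdot\zeta)\mathbbm 1_{\C^4}+i(\xi\wedge\zeta)\cdot A\boldsymbol\alpha$ is only a tidier organization of the same computation, and it also writes out explicitly the anticommutator expansion that the paper describes only as a ``lengthy but easy calculation''.
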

\begin{remark}
    We have already used the identity in 2). It follows from the fact that~\(\alpha^*_j = \alpha_j\,,\) for $j=1,2,3$.
\end{remark}
\begin{proof}
Equality in~(1) easily descends from the fact that
\[
(\boldsymbol\alpha\cdot\xi)^2 = \frac12\{\boldsymbol\alpha\cdot\xi, \boldsymbol\alpha\cdot\xi\} = |\xi|^2\,.
\]\par
Let us now prove the commutator formula in~(3). By using~(1), we get
\begin{align*}
     \Big [ e^{i \tau  \boldsymbol\alpha\cdot\xi } , e^{-i\tau \boldsymbol\alpha\cdot (\eta +\xi)} \Big] & =  \frac{ \sin (\tau \vert \xi \vert )}{\vert \xi \vert}  \frac{ \sin (\tau \vert \eta + \xi \vert )}{\vert \eta + \xi \vert} [ \boldsymbol\alpha\cdot\xi, \boldsymbol\alpha\cdot(\eta + \xi)]\,.
\end{align*}
Focusing on the commutator on the right-hand side, 
\begin{align*}
[ \boldsymbol\alpha\cdot\xi, \boldsymbol\alpha\cdot(\eta + \xi)] &=  [ \boldsymbol\alpha\cdot\xi, \boldsymbol\alpha\cdot\eta] = \sum_{j \ne k} \xi_j \eta_k [\alpha_j, \alpha_k] = 2 \sum_{j \ne k} \xi_j \eta_k \alpha_j \alpha_k \\
& = 2 \sum_{j \ne k} \xi_j \eta_k \begin{pmatrix}
    \sigma_j \sigma_k & 0\\
    0 & \sigma_j \sigma_k
\end{pmatrix}
= 2i \sum_{j \ne k} \xi_j \eta_k \sum_m \varepsilon_{jkm} \begin{pmatrix}
    \sigma_m & 0 \\
    0 & \sigma_m
\end{pmatrix}\\
& = 2 i \sum_{j \ne k} \sum_m  \varepsilon_{jkm} \xi_j \eta_k A \alpha_m = 2i ( \xi \wedge \eta ) \cdot A \boldsymbol \alpha \,,
\end{align*}
where $A$ is defined in \eqref{eq:A-matrix} and $\varepsilon_{jkm}$ denotes the Levi-Civita symbol, we get the desired equality.\par

Finally, recalling that $\{\boldsymbol \alpha \cdot \xi,\boldsymbol \alpha \cdot \left(\xi+\eta\right)\}=2\left|\xi\right|^2+2\xi\cdot\eta$, a lengthy but easy calculation yields the anticommutator formula in~(4).
\end{proof}

\begin{proof}[Proof of~\Cref{rmk:L_2}]
    Recalling the notation $g(\xi) = f(\xi) f^*(\xi)$ and that $AB=\frac{1}{2}\left(\left[A,B\right]+\{A,B\}\right)$, we can write
    \begin{multline*}
        \int \Tr \Big [  \Big(  f(\xi)f(\xi)^* - f(\eta + \xi)f(\eta + \xi)^* \Big ) e^{i(t-\tau) \boldsymbol\alpha \cdot \xi} e^{- i (t-\tau) \boldsymbol\alpha \cdot (\xi + \eta)} \Big ] d\xi \\
        = \frac{1}{2} \int \Tr \Big (  \Big( g(\xi) - g(\xi + \eta) \Big ) \Big ([e^{i(t-\tau) \boldsymbol\alpha \cdot \xi}, e^{- i (t-\tau) \alpha \cdot (\xi + \eta)}] + \{e^{i(t-\tau) \boldsymbol\alpha \cdot \xi}, e^{- i (t-\tau) \alpha \cdot (\xi + \eta)}\} \Big ) \,d\xi\,.
    \end{multline*}
    Then, we observe that the function 
    \[
    \Sigma: (\xi,\zeta) \mapsto  \Tr \Big (  \Big( g(\xi) - g(\zeta) \Big ) \{e^{i(t-\tau) \boldsymbol\alpha \cdot \xi}, e^{ i (t-\tau) \boldsymbol\alpha \cdot \zeta}\}\Big)
    \]
    is skew-symmetric yielding (with the change of variable~\(\xi\leftarrow -\xi-\eta\)) 
    \[
    \int \Sigma(\xi,-(\eta+\xi))\, d\xi = \int \Sigma(-(\eta + \xi),\xi)\, d\xi = 0 .
    \]
    Recalling \eqref{eq:L_2(V)} and the expression for the commutator in Lemma~\ref{lemma:exp}, we conclude the proof. 
\end{proof}

\subsection{Setting the \texorpdfstring{\(TT^*\)}{TT*} Argument}

The previous subsection provides a description of the operator~\(L_2\) which is precise enough to estimate it. To estimate~\(L_1\), we use a~\(TT^*\) argument where the~\(TT^*\) operator takes the same form as~\(L_2\). 

\begin{proposition}\label{prop:TTstar}
Let~\(\Theta\) be the operator acting on~\(L^2_{\omega,x}\) such that
\[
\Theta : Z_0 \mapsto \left( t\mapsto i \E(\la Y(t),S(t) Z_0\ra)\right) \,.
\]
We have that: \begin{enumerate}
\item \(L_3 = 2\Re (-i \Theta)\);
\item
\(
\Theta^* : U \mapsto -i \int_{\R} S(-\tau) [U(\tau) Y(\tau)] \,d\tau \, ;
\)
\item if there exist Banach spaces~\(B_1\) and~\(B_2\) and~\(p>2\) such that the operator defined by
\begin{equation*}
    L_1^\infty(V)=S(t) \Theta^* (w\ast V)=\int_\R S(t-\tau)[w\ast V(\tau) Y(\tau)]\,d\tau \,
\end{equation*}
is continuous from~\(L^2(\R,B_1)\) to~\(L^p(\R,B_2)\), then so is~\(L_1\);
\item\label{ass:ThetaThetastar} the operator~\(\Theta \Theta^*\) is given by
\[
\widehat{\Theta\Theta^*(V)}(t,\eta) = \int_{\R} \widehat V(\tau, \eta) M(t-\tau, \eta)\, d\tau \,,
\]
where we recall
\[
M(t,\eta) = \int_{\R^3} \Tr_{\C^4} \big[ f(\xi) f(\xi)^*e^{it\boldsymbol{\alpha} \cdot \xi} e^{-it\boldsymbol{\alpha}\cdot (\eta + \xi)}\big] \, d\xi\, .
\]
\end{enumerate}
\end{proposition}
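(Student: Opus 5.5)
The four items are checked one at a time, using the stochastic integral representation \eqref{eq:Yf} of~\(Y\), the covariance rule \eqref{formula_dW}, and the unitarity of~\(S(t)\) on~\(L^2_x\).

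\emph{Item (1).} This is immediate from the definitions. Indeed, \(-i\Theta(Z_0)(t) = \E(\la Y(t),S(t)Z_0\ra)\), so \(2\Re(-i\Theta Z_0) = L_3(Z_0)\) by \eqref{eq:defL3}.

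\emph{Item (2).} We pair \(\Theta Z_0\) with a real-valued test function \(U\) in \(L^2_{t,x}\):
\[
\an{U,\Theta Z_0} = \int_\R\int \overline{U(t,x)}\, i\,\E\big(\la Y(t,x),(S(t)Z_0)(x)\ra\big)\,dx\,dt .
\]
We move \(U\) inside as \(\la U(t)Y(t), S(t)Z_0\ra\). Then we use the adjointness of \(S(t)\), namely \(S(t)^*=S(-t)\). The result is
\[
\E\,\Big\la \int_\R -i\,S(-t)[U(t)Y(t)]\,dt,\; Z_0\Big\ra_{L^2_x},
\]
which identifies \(\Theta^*\) in \(L^2_{\omega,x}\). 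Complex-valued \(U\) is handled the same way, with conjugation conventions.

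\emph{Item (3).} We write
\[
L_1(V)(t) = -i\int_\R \mathbbm 1_{0<\tau<t}\,S(t-\tau)[w*V(\tau)\,Y(\tau)]\,d\tau
\]
(with the obvious sign for \(t<0\)). This is the Christ--Kiselev truncation of the operator \(L_1^\infty\). Since \(2<p\), the Christ--Kiselev lemma transfers \(L^2(\R,B_1)\to L^p(\R,B_2)\) boundedness from \(L_1^\infty\) to its retarded version \(L_1\). The only point to check is that \(L_1^\infty\) has the form \(\int K(t,\tau)V(\tau)\,d\tau\) with operator-valued kernel \(K(t,\tau)=S(t-\tau)[w*\,\cdot\,Y(\tau)]\), which is clear. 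I would also note that \(S(t)\Theta^*(w*V)=-i\int S(t-\tau)[\cdots]\), so the constant \(-i\) is harmless.

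\emph{Item (4).} This is the computational step. We compose: \(\Theta\Theta^*V(t)=i\,\E\la Y(t), S(t)\Theta^*V\ra\), with
\[
S(t)\Theta^*V=-i\int_\R S(t-\tau)[V(\tau)Y(\tau)]\,d\tau .
\]
This is exactly the expression computed in the proof of \Cref{prop:L_2}, except that \(w*V\) is replaced by \(V\) and \(\int_0^t\) by \(\int_\R\). The steps there then apply verbatim. We expand \(Y(\tau)\) in Fourier variables and shift \(\eta\leftarrow\eta-\xi\). We contract the Gaussian measures using \eqref{formula_dW}, which produces \(\Tr[f(\xi)f(\xi)^*e^{it\boldsymbol\alpha\cdot\xi}e^{-i(t-\tau)\boldsymbol\alpha\cdot(\eta+\xi)}e^{-i\tau\boldsymbol\alpha\cdot\xi}]\). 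Finally we use \([f f^*,e^{i\tau\boldsymbol\alpha\cdot\xi}]=0\) and cyclicity of the trace to obtain \(M(t-\tau,\eta)\). The \(\widetilde m\) phases cancel, and the factor \(i\cdot(-i)=1\) gives the claimed formula with no real part taken. The main obstacle is only bookkeeping of the constants \((2\pi)^{3/2}\) and of the factors \(\pm i\), both of which must match the normalization used in \Cref{prop:L_2}. The justification of the Christ--Kiselev step in (3), which needs \(p>2\), is the only non-computational ingredient.
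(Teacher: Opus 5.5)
Your proposal is correct and follows the paper's proof essentially step by step. Item (1) is by definition, (2) is a pairing argument using $S(t)^*=S(-t)$, (3) is the Christ--Kiselev lemma with $p>2$, and (4) expands $S(t-\tau)[V(\tau)Y(\tau)]$ in Fourier variables, contracts the Gaussian measure via \eqref{formula_dW}, and uses cyclicity of the trace together with $[f(\xi)f(\xi)^*,\boldsymbol\alpha\cdot\xi]=0$. Your tracking of the factor $i\cdot(-i)=1$ and of the cancelling $\widetilde m$ phases is consistent with the statement, and in fact slightly cleaner than the paper, whose last line in (2) silently drops the $-i$.
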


\begin{proof}
    Assertion~(1) follows from the definitions of~\(\Theta\) and~\(L_3\).

    Assertion~(2) follows from the computation below:
    \begin{align*}
    \an{Z_0,\Theta^*(U)}_{\omega,x} = & \an{\Theta(Z_0),U}_{t,x} \\
    = & \int_{\R} \int_{\R^3} -i \E(\an{S(t)Z_0,Y(t)}_{\C^4}) U(t)\, dx\, dt \\
    = & \int_{\R} \E \left( -i \an{S(t) Z_0, U(t) Y(t)}_x\right)\, dt \\
    = & \int_{\R} \E \left( -i \an{Z_0, S(-t) [U(t) Y(t)]}_x \right)\, dt \\
    = & \an{Z_0, \int_\R S(-t)[U(t) Y(t)] \, dt}_{x,\omega}\,.
    \end{align*}

    Assertion~(3) is due to the fact that
    \[
    L_1^\infty(V) = \int_\R S(t-\tau)[w\ast V(\tau) Y(\tau)] \,d\tau\quad\text{and}\quad \quad L_1(V) = \int_0^t S(t-\tau)[w\ast V(\tau) Y(\tau)] \,d\tau\, . 
    \]
    Therefore, it follows from the Christ--Kiselev lemma.

    By definition, we have 
    \[
    \Theta \Theta^*(U) = \E\left(\la Y(t) , \int_{\R} S(t-\tau) [U(\tau) Y(\tau)]\, d\tau \ra \right)\,.
    \]
    Recall that
    \begin{multline*}
    S(t-\tau)[U(\tau) Y(\tau)] = \iint e^{i\eta \cdot x} e^{-i(t-\tau) (\boldsymbol{\alpha}\cdot \eta +\widetilde m)} e^{-i\tau (\boldsymbol{\alpha}\cdot \xi + \widetilde m)} \mathcal F_x{V(\tau) e^{i x\cdot \xi}}(\eta) f(\xi)\, d\eta\, dW(\xi) \\
    = \iint e^{i\eta \cdot x} e^{-i(t-\tau) (\boldsymbol{\alpha}\cdot \eta +\widetilde m)} e^{-i\tau (\boldsymbol{\alpha}\cdot \xi + \widetilde m)} \widehat V(\tau, \eta-\xi) f(\xi) \,d\eta\, dW(\xi)\,.
    \end{multline*}
    We deduce
    \begin{multline*}
    \E\left(\la Y(t) , S(t-\tau) [U(\tau) Y(\tau)] \ra \right) \\
    = \iint \Tr_{\C^4} \left( f(\xi)^*e^{it(\boldsymbol{\alpha}\cdot \xi +\widetilde m)} e^{i(\eta - \xi) \cdot x} e^{-i(t-\tau) (\boldsymbol{\alpha}\cdot \eta +\widetilde m)} e^{-i\tau (\boldsymbol{\alpha}\cdot \xi + \widetilde m)} \widehat V(\tau, \eta-\xi) f(\xi) \right)\, d\eta\, d\xi\, .
    \end{multline*}
    Since~\(V\) is scalar, we have
    \begin{multline*}
    \E\left(\la Y(t) , S(t-\tau) [U(\tau) Y(\tau)] \ra \right) \\
    = \iint \Tr_{\C^4} \left( f(\xi)^*e^{it\boldsymbol{\alpha}\cdot \xi }  e^{-i(t-\tau) \boldsymbol{\alpha}\cdot \eta } e^{-i\tau \boldsymbol{\alpha}\cdot \xi }  f(\xi) \right)e^{i(\eta - \xi) \cdot x} \widehat V(\tau, \eta-\xi)\, d\eta \, d\xi\, .
    \end{multline*}
    Using the cyclicity of the trace and the commutativity of~\(f(\xi) f(\xi)^*\) with~\(\boldsymbol{\alpha}\cdot \xi\), we get
    \begin{multline*}
    \E\left(\la Y(t) , S(t-\tau) [U(\tau) Y(\tau)] \ra \right)\\
    = \iint \Tr_{\C^4} \left(  e^{-i(t-\tau) \boldsymbol{\alpha}\cdot \eta } e^{-i(\tau - t) \boldsymbol{\alpha}\cdot \xi}  f(\xi) f(\xi)^*\right) e^{i(\eta - \xi) \cdot x} \widehat V(\tau, \eta-\xi)\, d\eta\, d\xi\, .
    \end{multline*}
    A change of variables yields
    \begin{multline*}
    \E\left(\la Y(t) , S(t-\tau) [U(\tau) Y(\tau)]\ra \right) \\
    =\iint \Tr_{\C^4} \left(  e^{-i(t-\tau) \boldsymbol{\alpha}\cdot (\eta + \xi) } e^{-i(\tau - t) \boldsymbol{\alpha}\cdot \xi}  f(\xi) f(\xi)^*\right) e^{i\eta \cdot x} \widehat V(\tau, \eta)\, d\eta\, d\xi\, .
    \end{multline*}
    Finally, integrating in time, we get Assertion~\eqref{ass:ThetaThetastar}.
\end{proof}

%

\section{Estimates of the Linear Response}

\subsection{Preliminary Lemmata}

Let~\(\eta \neq 0\). We set~\(\widehat \eta = \frac{\eta}{|\eta|}\) and let~\(\mathcal R\in O(3)\) such that~\(\widehat \eta = \mathcal Re_3\) where~\(e_3 = (0,0,1)\). For all~\(\xi \in \R^3\), we set
\[
\xi = z\widehat \eta + r\sigma\,,
\]
where~\(z = \xi\cdot \widehat \eta = (\mathcal R^*\xi)\cdot e_3\), where~\(r = \sqrt{|\xi|^2 - z^2}\), and~\(\sigma = \mathcal R^*(\cos \theta,\sin\theta,0)\) such that 
\[
r(\cos\theta,\sin \theta,0) = \mathcal R^*\xi - ze_3\,.
\]
For any integrable map $h$ of~\(\R^3\), we have
\begin{equation}\label{eq:r-z-theta-change}
    \int h(\xi) \,d\xi = \int_{\R_+^* \times \R\times[0,2\pi]} h(z\widehat\eta + r\sigma)\, r\,dr\,dz\,d\theta\, .
\end{equation}
We write~\(I(r,z,\theta) = z\widehat\eta + r\sigma\) and set 
\[
\begin{array}{cccc}
\Phi_\pm : & \R_+^* \times \R & \longrightarrow & \R \\
& (r,z) & \longmapsto & \sqrt{(z+|\eta|)^2 + r^2)} \pm \sqrt{z^2 + r^2}
\end{array}\,.
\]
With a slight abuse of notation, we also denote~\(\Phi_\pm(\xi,\eta) = \Phi_\pm(\sqrt{|\xi|^2 - (\xi\cdot \widehat \eta)^2},\xi\cdot \widehat \eta) =  |\xi + \eta| \pm |\xi|\).

\begin{lemma}\label{lemma:diffeo}
The map
\[
\Psi : \begin{array}{ccc}
\R_+^* \times \R \times \mathrm{S}^1  & \longrightarrow & (|\eta|,\infty)\times(-|\eta|,|\eta|)\times\mathrm{S}^1\\
(r,z,\theta) & \longmapsto & (\Phi_+(r,z), \Phi_-(r,z),\theta)
\end{array} 
\]
is a~\(C^1\)--diffeomorphism. Its inverse is given by
\begin{equation}\label{eq:Psi-inverse}
    \Psi^{-1} (s_+,s_-,\theta) = \left( \sqrt{\left(\frac{s_+-s_-}{2}\right)^2 - \left(\frac{s_+s_--|\eta|^2}{2|\eta|}\right)^2} , \frac{s_+s_--|\eta|^2}{2|\eta|},\theta\right)\,.
\end{equation}
Moreover, we have
\begin{align*}
    \lim_{s_+\rightarrow |\eta|} \Psi^{-1}(s_+,s_-,\theta) =& \left(0,\frac{s_--|\eta|}{2},\theta\right)\,,\\
    \lim_{s_-\rightarrow \pm |\eta|} \Psi^{-1}(s_+,s_-,\theta) = &\left( 0,\frac{\pm s_+ - |\eta|}{2},\theta\right)\,,
\end{align*}
and
\begin{equation}\label{eq:Psi-jacobian}
    \mathrm{jac}(\Psi)(r,z,\theta) = \frac{2r|\eta|}{|\xi|\,|\xi+\eta|}\,.
\end{equation}
\end{lemma}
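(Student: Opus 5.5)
The plan is to verify the claims directly, using the geometric meaning of $\Phi_\pm$: in the coordinates $\xi = z\widehat\eta + r\sigma$ we have $|\xi| = \sqrt{z^2+r^2}$ and $|\xi+\eta| = \sqrt{(z+|\eta|)^2+r^2}$. So $(s_+,s_-) = (|\xi+\eta|+|\xi|,\,|\xi+\eta|-|\xi|)$ are elliptic–hyperbolic coordinates with foci $0$ and $-\eta$, and $\theta$ is just carried along.

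\emph{Step 1 (range).} For $r>0$ the points $0$, $\xi$ and $-\eta$ are not collinear, so the triangle inequality is strict. This gives $s_+ > |\eta|$ and $|s_-| < |\eta|$, hence $\Psi$ maps into $(|\eta|,\infty)\times(-|\eta|,|\eta|)\times \mathrm S^1$.

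\emph{Step 2 (explicit inverse).} Multiplying the two coordinates gives $s_+s_- = |\xi+\eta|^2-|\xi|^2 = 2z|\eta|+|\eta|^2$, so $z = \frac{s_+s_--|\eta|^2}{2|\eta|}$. Subtracting them gives $|\xi| = \frac{s_+-s_-}{2}$, hence $r^2 = |\xi|^2 - z^2$, which is formula \eqref{eq:Psi-inverse}. To see that this formula defines an inverse on the whole target set, I will check the algebraic identity
\[
\Big(\tfrac{s_+-s_-}{2}\Big)^2-\Big(\tfrac{s_+s_--|\eta|^2}{2|\eta|}\Big)^2=\frac{(s_+^2-|\eta|^2)(|\eta|^2-s_-^2)}{4|\eta|^2},
\]
by expanding both sides. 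This shows $r^2>0$ exactly on the target set. Then I recompute $|\xi| = \frac{s_+-s_-}{2}$ and $|\xi+\eta|^2 = |\xi|^2+2z|\eta|+|\eta|^2 = \big(\frac{s_++s_-}{2}\big)^2$. Since $s_++s_->0$ on the target, this gives $|\xi+\eta| = \frac{s_++s_-}{2}$, so $\Psi\circ\Psi^{-1} = \mathrm{id}$. Composing the other way, $\Psi^{-1}\circ\Psi = \mathrm{id}$ is immediate from the derivation above.

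\emph{Step 3 (regularity, limits, Jacobian).} Both maps are $C^1$ (indeed smooth): $\Phi_\pm$ are smooth away from $\xi=0$ and $\xi=-\eta$, which lie on the axis $r=0$, and the square root in $\Psi^{-1}$ has a strictly positive argument. The limits follow by plugging $s_+=|\eta|$ or $s_-=\pm|\eta|$ into the factorized expression for $r^2$ (which then vanishes) and into the formula for $z$. For the Jacobian, write $A=|\xi+\eta|$ and $B=|\xi|$. Then
\[
\partial_r\Phi_\pm = \frac rA\pm\frac rB,\qquad \partial_z\Phi_\pm = \frac{z+|\eta|}{A}\pm\frac zB .
\]
The determinant $(a+b)(c-d)-(a-b)(c+d)$ equals $2(bc-ad)$, and here this is $\frac{2r(z+|\eta|)-2rz}{AB}=\frac{2r|\eta|}{AB}$, which is \eqref{eq:Psi-jacobian}; its sign depends only on the ordering convention. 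Since it is nonvanishing for $r>0$, the inverse function theorem independently confirms that $\Psi^{-1}$ is $C^1$.

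The only step requiring care is surjectivity onto the stated range, i.e. the positivity of $r^2$. This rests on the factorization identity in Step 2, which I would verify first.
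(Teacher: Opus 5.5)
Your proposal is correct and follows essentially the same route as the paper: you get the range from the triangle inequality, recover $z$ and $|\xi|$ from the product and difference of $s_\pm$, prove surjectivity through the explicit formulas, and compute the Jacobian by direct differentiation. Two of your details are more careful than the paper's own proof: the strict triangle inequality for $r>0$ (the paper writes only non-strict inequalities, although the target intervals are open), and the factorization $r^2=\frac{(s_+^2-|\eta|^2)(|\eta|^2-s_-^2)}{4|\eta|^2}$ together with the check $s_++s_->0$ (the paper just says that the well-definedness of $r$ and the identity $\Phi_\pm(r,z)=s_\pm$ are easy to check).
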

\begin{proof}
    Set $W\coloneqq(|\eta|,\infty)\times(-|\eta|,|\eta|)\times\mathrm{S}^1$. We start by showing that $\Psi$ is bijective. First, $\mathrm{Im}\Psi\subset W$. Indeed,
    \begin{equation*}
        \Phi_+\left(r,z\right)=|\xi+\eta|+|\xi|\geq |\eta|-|\xi|+|\xi|=|\eta|
    \end{equation*}
    and
    \begin{equation*}
        \Phi_-\left(r,z\right)=|\xi+\eta|-|\xi|\implies|\Phi_-|\leq|\xi+\eta-\xi|=|\eta|\,.
    \end{equation*}\par
    Secondly, $\Psi$ is injective. Indeed, suppose that there exists $r,r'\in\R^*_+$ and $z,z'\in\R$ such that $\Phi_\pm\left(r,z\right)=\Phi_\pm\left(r',z'\right)$. Then,
    \begin{equation*}
        \Phi_+\left(r,z\right)\Phi_-\left(r,z\right)=|\xi+\eta|^2-|\xi|^2=|\eta|^2+2\xi\cdot \eta=|\eta|^2+2|\eta|z\,,
    \end{equation*}
    and, similarly,
    \begin{equation*}
        \Phi_+\left(r',z'\right)\Phi_-\left(r',z'\right)=|\eta|^2+2|\eta|z'\,,
    \end{equation*}
    which imply $z=z'$. Moreover,
    \begin{equation*}
        |\xi|=\frac{\Phi_+\left(r,z\right)-\Phi_-\left(r,z\right)}{2}=\frac{\Phi_+\left(r',z'\right)-\Phi_-\left(r',z'\right)}{2}=|\xi'|\,,
    \end{equation*}
    yielding $\sqrt{r^2+z^2}=\sqrt{r'^2+z^2}$ which implies $r=r'$.\par
    Finally, $\Psi$ is surjective. Indeed, let $\left(s_+,s_-\right)\in(|\eta|,\infty)\times(-|\eta|,|\eta|)$. Set
    \begin{align}\label{eq:z-function-of-s+s-}
        z&=\frac{s_+s_--|\eta|^2}{2|\eta|}\,,\\\label{eq:r-function-of-s+s-}
        r&=\sqrt{\frac{\left(s_+-s_-\right)^2}{4}-\left(\frac{s_+s_--|\eta|^2}{2|\eta|}\right)^2}\,.
    \end{align}
    One can first easily check that $r$ is well-defined since $s_-\in(-|\eta|,|\eta|)$. Then, an easy calculation yields $\Phi_\pm\left(r,z\right)=s_\pm$. As a consequence, we also get that the inverse of $\Psi$ is obviously given by \eqref{eq:Psi-inverse}.\par
    Concerning the Jacobian, we first notice that
    \begin{equation*}
        \mathrm{jac}(\Psi)(r,z,\theta) =\Bigg|\frac{\partial\left(\Phi_+,\Phi_-\right)}{\partial\left(r,z\right)}\Bigg|\,,
    \end{equation*}
    and then compute
    \begin{align*}
        \partial_r\Phi_\pm=\frac{r}{|\xi+\eta|}\pm\frac{r}{|\xi|}\,,\quad\partial_z\Phi_\pm=\frac{|\eta|+z}{|\xi+\eta|}\pm\frac{z}{|\xi|}\,,
    \end{align*}
    which yields \cref{eq:Psi-jacobian}. This concludes the proof.
\end{proof}


%

\begin{definition}
Let $m_1,m_2\in\N$. We define
\[
\mathcal S_{m_1,m_2} = \{ h \in L^\infty(\R^3_\xi\times \R^3_\eta) \;|\; (\an{\xi}^{m_1} +\an{\eta}^{m_1})\partial^k_{\xi} h \in L^\infty(\R^3_\xi\times\R^3_\eta),\,\forall k\in\N^3, |k|\leq m_2\}\,,
\]
endowed with the norm
\[
\|h\|_{m_1,m_2} = \sum_{|k|\leq m_2} \|(\an{\xi}^{m_1}+\an{\eta}^{m_1}) \partial^k_\xi h\|_{L^\infty(\R^6)}\,.
\]
\end{definition}

\begin{lemma}\label{stat:h-cos-decomposition}
Let~\(F_\pm\) be the linear map defined from~\(L^1_\xi L^\infty_\eta\) to~\(L^\infty_{t,\eta}\) as 
\[
F_\pm(h)(t,\eta) = \int_{\R^3} h(\xi,\eta) \cos(t\Phi_\pm(\xi,\eta)) \,d\xi\, .
\]
There exists two linear maps~\(\overline F_\pm ,\widetilde F_\pm\) such that~\(\overline F_\pm\) and~\(\widetilde F_\pm\) are continuous from~\(\mathcal S_{8,2}\) to respectively~\(\an{\eta}^{-1}L^\infty(\R^3)\) and~\(L^1(\R,L^\infty(\R^3))\) and satisfy, for all~\(t\in \R\) and all~\(\eta\in \R^3\),
\[
 F_\pm(h)(t,\eta) = \frac{\sin(|\eta|t)}{|\eta| t} \overline F_\pm(h)(\eta) + |\eta|^{-2}\widetilde F_\pm(h)(t,\eta)\,.
\]
Furthermore, if~\(h(\xi,\eta)\) vanishes whenever~\(\xi\) and~\(\eta\) are colinear, then~\(\overline F_\pm(h) = 0\). 

Moreover, \(\overline F_\pm\) and~\(\widetilde F_\pm\) can be extended to continuous maps from~\(\frac{1}{|\xi|\;|\xi +\eta|} \mathcal S_{6,2}\) to respectively~\(\an{\eta}^{-1}L^\infty(\R^3)\) and~\(L^1(\R,L^\infty(\R^3))\), meaning that
\begin{align*}
    \|\an{\eta} \overline F_\pm(h)\|_{L^\infty_\eta} \lesssim &\|\,|\xi|\,|\xi+\eta|\,h\|_{6,2}\,,\\
    \|\widetilde F_\pm(h)\|_{L^1_tL^\infty_\eta} \lesssim &\|\,|\xi|\,|\xi+\eta|\,h\|_{6,2}\,.
\end{align*}
\end{lemma}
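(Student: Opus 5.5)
The plan is to pass to the coordinates $(s_+,s_-,\theta)=\Psi(r,z,\theta)$ of \Cref{lemma:diffeo}, in which the phase $\Phi_\pm$ is a single coordinate. By \eqref{eq:r-z-theta-change} and \eqref{eq:Psi-jacobian},
\[
F_\pm(h)(t,\eta)=\frac{1}{2|\eta|}\int_{|\eta|}^\infty\int_{-|\eta|}^{|\eta|}\cos(ts_\pm)\,H(s_+,s_-,\eta)\,ds_-\,ds_+\,,
\qquad
H=\int_0^{2\pi}|\xi|\,|\xi+\eta|\,h(\xi,\eta)\,d\theta\,,
\]
with $\xi=I(\Psi^{-1}(s_+,s_-,\theta))$. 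We have $|\xi|=(s_+-s_-)/2$ and $|\xi+\eta|=(s_++s_-)/2$. The factor $r$ in the measure cancels against the Jacobian, so the weight $|\xi|\,|\xi+\eta|$ is exactly what appears in the extended statement. In particular, it suffices to treat $\tilde h:=|\xi|\,|\xi+\eta|\,h\in\mathcal S_{6,2}$, and the $\mathcal S_{8,2}$ case follows since $|\xi|\,|\xi+\eta|\lesssim(\an{\xi}+\an{\eta})^2$ and the derivatives of $|\xi||\xi+\eta|$ cost at most inverse powers that I would handle on the region away from $0,-\eta$ only. Near those points $\tilde h$ is bounded anyway, which is the reason for phrasing the extension this way.

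\textbf{Case $F_-$: oscillation in the bounded variable $s_-\in(-|\eta|,|\eta|)$.} I would integrate by parts once in $s_-$:
\[
\int_{-|\eta|}^{|\eta|}\cos(ts_-)G(s_-)\,ds_-=\frac{\sin(t|\eta|)}{t}\bigl(G(|\eta|)+G(-|\eta|)\bigr)-\frac1t\int_{-|\eta|}^{|\eta|}\sin(ts_-)G'(s_-)\,ds_-\,,
\]
where $G(s_-)=\int H\,ds_+$. Here:
\begin{itemize}
\item The boundary term gives $\frac{\sin(t|\eta|)}{t|\eta|}\overline F_-(h)$ with $\overline F_-(h)(\eta)=\frac12\int\bigl(H(s_+,|\eta|)+H(s_+,-|\eta|)\bigr)\,ds_+$.
\item By the limits in \Cref{lemma:diffeo}, $s_-=\pm|\eta|$ corresponds to $r=0$, i.e.\ $\xi$ colinear with $\eta$. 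This gives the vanishing statement, and the bound $\an{\eta}^{-1}$ follows from the decay $\an{\eta}^{-6}$ of $\tilde h$ against the $s_+$ integral.
\item For the remainder, I would integrate by parts once more to obtain a $1/t^2$ factor, which is integrable for $|t|\ge1$. For $|t|\le1$ I would use the trivial bound. The factor $|\eta|^{-2}$ in the statement should absorb the $1/(2|\eta|)$ and the length of the interval, so that $\widetilde F_-(h)(t,\eta)$ is bounded by $\min(1,t^{-2})$ times norms of $\tilde h$.
\end{itemize}

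\textbf{Case $F_+$.} Here I would integrate by parts in $s_+\in(|\eta|,\infty)$. The boundary at $s_+=|\eta|$ is again $r=0$ (colinear), giving $\frac{\sin(t|\eta|)}{t}$ times a trace, and decay at infinity is supplied by $\an{\xi}^{-6}$. A second integration by parts in $s_+$ gives $t^{-2}$.

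\textbf{Main obstacle.} The hardest step is controlling $\partial_{s_\pm}$ and $\partial^2_{s_\pm}$ of $H$ through the chain rule with $\Psi^{-1}$. The derivatives of $r=\sqrt{\cdots}$ in \eqref{eq:Psi-inverse} blow up like $1/r$ at the colinear boundary $r\to0$. I would deal with this by noting that after the $\theta$-average, $H$ depends on $\xi$ through $(r\sigma)$ smoothly: the $\theta$-integral of a $C^2$ function of $r\sigma$ is a $C^2$ function of $r^2$. Since $r^2$ is a polynomial in $(s_+,s_-)$ divided by $|\eta|^2$, each $s$-derivative then costs only $\nabla_\xi\tilde h$ times a factor of size $\lesssim(s_+/|\eta|)$. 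This factor, together with the $|\eta|^{-1}$ losses, is what the $|\eta|^{-2}$ prefactor and the surplus decay $\an{\cdot}^{6}$ are meant to absorb. For small $|\eta|$ I expect this bookkeeping to be the delicate point. Should the $C^2$-in-$r^2$ trick fail, the fallback is to keep only one integration by parts and gain the remaining $t^{-1-}$ from the Hölder regularity of $G'$.
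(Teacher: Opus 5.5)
Your outline matches the paper's: the change of variables $\Psi$, then a first integration by parts in $s_\pm$ whose boundary terms lie on the colinear set $r=0$ and define $\overline F_\pm$ (which gives the vanishing statement), and the trivial bound for $|t|\le1$. The gap is at the step you single out yourself.

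\textbf{The regularity claim is false.} The $\theta$-average of a $C^2$ function of $r\sigma$ is in general only $C^1$ as a function of $\rho=r^2$: for instance $|y|^3$ averages to $2\pi\rho^{3/2}$. The reason is that $\partial_\rho$ of the average equals $\frac{\pi}{2}$ times the mean of the transverse Laplacian of $\tilde h$ over the disc of radius $r$. So each $\rho$-derivative costs two $\xi$-derivatives, not one as you state.

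\textbf{Consequence for the second integration by parts.} Under $\mathcal S_{\cdot,2}$ this keeps $\partial_{s_\pm}H$ controlled. However, the $r$-derivative of a disc mean of a bounded function is only $O(1/r)$, and $\partial_\rho=\frac{1}{2r}\partial_r$. Hence you only get $|\partial^2_{s_\pm}H|\lesssim r^{-2}$ near $r=0$. There $\partial_{s_\pm}\rho\neq0$ and $ds_+\,ds_-=\frac{2|\eta|\,r}{|\xi|\,|\xi+\eta|}\,dr\,dz$, so $\int|\partial^2_{s_\pm}H|$ can diverge logarithmically. An example is a transverse profile $|y|^2\cos\log(1/|y|)$, which has bounded second derivatives. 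Thus the global second integration by parts gives no $L^1_t$ bound. Your fallback also fails as stated: it needs H\"older continuity of $G'$, which two bounded derivatives do not provide (in this example $G'$ oscillates like $\cos\log$ at the boundary).

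\textbf{What the paper does instead.} The missing idea is a time-dependent cut-off around the colinear set. The paper does not average in $\theta$. It uses only
\[
r|\eta|\,|\partial_{s_\pm}I\circ\Psi^{-1}|\lesssim(\an{\xi}+\an{\eta})^2,\qquad |\partial^2_{s_\pm}I\circ\Psi^{-1}|\lesssim r^{-3}|\eta|^{-2}(\an{\xi}+\an{\eta})^4 .
\]
After the first integration by parts, it splits the remainder into $\{r\le t^{-a}\}$ and $\{r>t^{-a}\}$, with $a\in(0,\frac13)$.
\begin{itemize}
\item On the thin part it stops and returns to $(r,z,\theta)$. There the Jacobian factor $r$ cancels the $1/r$, and the width $t^{-a}$ of the region gives $t^{-1-a}$.
\item On the rest it integrates by parts once more. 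The boundary terms on $\{r=t^{-a}\}$ are $O(t^{-2+a})$ and the bulk term is $O(t^{-2+3a})$.
\end{itemize}
Since $a<\frac13$, all of this is integrable on $|t|\ge1$.

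\textbf{A smaller point.} For $h\in\mathcal S_{8,2}$, the product $\tilde h=|\xi|\,|\xi+\eta|\,h$ is not in $\mathcal S_{6,2}$, because $\nabla^2_\xi|\xi|\sim|\xi|^{-1}$. So the $\mathcal S_{8,2}$ case does not literally reduce to the extended one. The paper avoids this by keeping $|\xi|\,|\xi+\eta|=(s_+^2-s_-^2)/4$ as a polynomial in $s_\pm$ and differentiating it in $s$ directly.
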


\begin{proof}
First of all, let~\(\mu : \R \rightarrow [0,1]\) be a~\( C^\infty\)--compactly supported function such that~\(\mu|_{[-1,1]} \equiv 1\). We set~\(\overline \mu = 1-\mu\) and
\begin{align*}
    F^{\mu}_\pm(h) (t,\eta) = & \mu(t) F_\pm(h) (t,\eta)\,, \\
    F^{\overline \mu}_\pm(h) (t,\eta) = & \overline\mu(t) F_\pm(h) (t,\eta)\,,
\end{align*}
such that
\[
F_\pm = F_\pm^\mu + F_\pm^{\overline\mu}\,.
\]
Since
\[
\|\an{\eta}^2 \,F_\pm (h)\|_{L^\infty_{t,\eta}} \leq \|h\|_{6,0}\,,
\]
we deduce
\[
\|\an{\eta}^2F_\pm^\mu (h)\|_{L^1_tL^\infty_{\eta}} \leq \|\mu\|_{L^1} \|\,h\|_{6,0}\,.
\]\par

Assume now that~\(\eta \neq 0\). First, recalling \eqref{eq:r-z-theta-change}, we have
\begin{align*}
    F_\pm(h)(t,\eta) &= \int_{\R^3} h(\xi,\eta) \cos(t\Phi_\pm(\xi,\eta))\, d\xi\\
    &= \int_{\R_+^* \times \R\times[0,2\pi]} h\left(I\left(r,z,\theta,\eta\right)\right)\cos\left(t\Phi_\pm\left(I\left(r,z,\theta,\eta\right)\right)\right)\,r\, dr\,dz\, d\theta\,,
\end{align*}
and perform a change of variables through the diffeormorphism~\(\Psi\) in Lemma~\ref{lemma:diffeo} to get
\[
F_\pm^{\overline\mu} (h) (t,\eta) = \overline \mu(t) \int_{(|\eta|,\infty) \times (-|\eta|,|\eta|) \times (0,2\pi)} h(I(\Psi^{-1}(s_+,s_-),\theta),\eta) \frac{|\xi|\,|\xi+\eta|}{2 |\eta|} \cos(s_\pm t)\, ds_+\, ds_-\, d\theta\,.
\]
We then proceed with a first integration by parts with respect to $s_+$, respectively $s_-$, to obtain
\begin{align*}
F_+^{\overline\mu} (h) (t,\eta) = & \overline \mu(t) \int_{ (-|\eta|,|\eta|) \times (0,2\pi)} \Big[ h(I(\Psi^{-1}(s_+,s_-),\theta),\eta) \frac{|\xi|\,|\xi+\eta|}{2 |\eta|} \frac{\sin(s_+ t)}{t}\Big]_{|\eta|}^\infty\, ds_-\,d\theta\\
& - \overline \mu(t) \int_{ (|\eta|,\infty) \times (-|\eta|,|\eta|) \times (0,2\pi)} \partial_{s_+}\Big[h(I(\Psi^{-1}(s_+,s_-),\theta),\eta) \frac{|\xi|\,|\xi+\eta|}{2 |\eta|}\Big] \frac{\sin(s_+ t)}{t}\, ds _+\, ds_-\, d\theta\,,\\
F_-^{\overline\mu} (h) (t,\eta) = & \overline \mu(t) \int_{ (|\eta|,\infty) \times (0,2\pi)} \Big[ h(I(\Psi^{-1}(s_+,s_-),\theta),\eta) \frac{|\xi|\,|\xi+\eta|}{2 |\eta|} \frac{\sin(s_- t)}{t}\Big]_{-|\eta|}^{|\eta|}\,ds_+\,d\theta \\
& - \overline \mu(t) \int_{ (|\eta|,\infty) \times (-|\eta|,|\eta|) \times (0,2\pi)} \partial_{s_-}\Big[h(I(\Psi^{-1}(s_+,s_-),\theta),\eta) \frac{|\xi|\,|\xi+\eta|}{2 |\eta|}\Big] \frac{\sin(s_- t)}{t}\, ds _+\, ds_-\, d\theta\,.
\end{align*}
Since
\begin{equation*}
    |\xi|=\frac{s_+-s_-}{2}\,,\quad |\xi+\eta|=\frac{s_++s_-}{2}\,,
\end{equation*}
thanks to Lemma~\ref{lemma:diffeo} and the polynomial decrease of~\(h\), we have
\begin{align*}
 \Big[ h(I(\Psi^{-1}(s_+,s_-),\theta),\eta) \frac{|\xi|\,|\xi+\eta|}{2 |\eta|} \frac{\sin(s_+ t)}{t}\Big]_{|\eta|}^\infty  = & - h(\frac{s_- - |\eta|}{2} \widehat \eta,\eta) \frac{|\eta|^2 - s_-^2}{8 |\eta|} \frac{\sin(|\eta| t)}{t}\,, \\
 \Big[ h(I(\Psi^{-1}(s_+,s_-),\theta),\eta) \frac{|\xi|\,|\xi+\eta|}{2 |\eta|} \frac{\sin(s_- t)}{t}\Big]_{-|\eta|}^{|\eta|} = & \frac{\sin(|\eta|t)}{t} \frac{s_+^2 - |\eta|^2}{8 |\eta|} \Big( h(\frac{s_+ - |\eta|}{2}\widehat \eta,\eta) + h(\frac{-s_+ - |\eta|}{2}\widehat \eta,\eta) \Big)\,.
\end{align*}
Set
\begin{align*}
\overline F_+(h)(\eta) = & -\int_{(-|\eta|,|\eta|) \times (0,2\pi)} h(\frac{s_- - |\eta|}{2} \widehat \eta,\eta) \frac{|\eta|^2 - s_-^2}{8}\,ds_-\, d\theta\,, \\ 
\overline F_-(h)(\eta) =& \int_{(|\eta|,\infty) \times (0,2\pi)} \frac{s_+^2 - |\eta|^2}{8} \Big( h(\frac{s_+ - |\eta|}{2}\widehat \eta,\eta) + h(\frac{-s_+ - |\eta|}{2}\widehat \eta,\eta) \Big)\,ds_+\, d\theta\,.
\end{align*}
Since
\begin{align*}
    0 \leq |\eta|^2 - s_-^2 \leq & \an{|\eta| - s_-}[\an{|\eta| - s_-} + 2 \an \eta]\,, \\
    0 \leq s_+^2 - |\eta|^2  \leq & \an{s_+ -|\eta| }[\an{s_+- |\eta|} + 2 \an \eta]\,,
\end{align*}
we deduce that
\[
\|\an{\eta} \overline F_\pm (h)\|_{L^\infty_\eta},\,
\|\an{\eta}^2 \overline F_\pm (h)\|_{L^\infty_\eta} \lesssim \|h\|_{6,0}\,.
\]
Additionally, notice that if \(h\) vanishes whenever~\(\xi\) and~\(\eta\) are colinear, then
\[
\overline F_\pm(h) = 0\,.
\]\par

With this definition of~\(\overline F_\pm\), we get that
\begin{align*}
F_\pm^{\overline\mu} (h) (t,\eta) = & \frac{\sin(|\eta|t)}{|\eta| t} \overline F_\pm(h)(\eta) + \mu(t) \frac{\sin(|\eta|t)}{|\eta| t} \overline F_\pm(h)(\eta) \\
& - \overline \mu(t) \int_{ (|\eta|,\infty) \times (-|\eta|,|\eta|) \times (0,2\pi)} \partial_{s_\pm}\Big[h(I(\Psi^{-1}(s_+,s_-),\theta),\eta) \frac{|\xi|\,|\xi+\eta|}{2 |\eta|}\Big] \frac{\sin(s_\pm t)}{t}\, ds_+\, ds_-\, d\theta\,.
\end{align*}

We have, since~\(\frac{\sin(|\eta| t)}{|\eta| t}\) is bounded 
\[
\| \an{\eta}^2\mu(t) \frac{\sin(|\eta|t)}{|\eta| t} \overline F_\pm(h)\|_{L^1_t,L^\infty_\eta} \lesssim \|h\|_{6,0}\,.
\]
Hence, we are left with proving that~\(|\eta|F^1_\pm(h)\) belongs to~\(L^1_t,L^\infty_\eta\), with
\begin{align*}
F^1_\pm(h) (t,\eta) = & - \overline \mu(t) \int_{ (|\eta|,\infty) \times (-|\eta|,|\eta|) \times (0,2\pi)} \partial_{s_\pm}\Big[h(I(\Psi^{-1}(s_+,s_-),\theta),\eta) \frac{|\xi|\,|\xi+\eta|}{2 }\Big] \frac{\sin(s_\pm t)}{t}\, ds _+\, ds_-\, d\theta\,. 
\end{align*}
Set~\(a \in (0,\frac13)\) and split the integral over the domains respectively given by~\(r\leq t^{-a}\) and by~\(r>t^{-a}\). Set
\begin{align}
\label{eq:F-smaller} F^\leq_\pm(h) (t,\eta) = & - \overline \mu(t) \int_{ r\leq t^{-a}} \partial_{s_\pm}\Big[h(I(\Psi^{-1}(s_+,s_-),\theta),\eta) \frac{|\xi|\,|\xi+\eta|}{2 }\Big] \frac{\sin(s_\pm t)}{t}\, ds _+\, ds_-\, d\theta\,, \\
\label{eq:F-bigger} F^>_\pm(h) (t,\eta) = & - \overline \mu(t) \int_{ r> t^{-a}} \partial_{s_\pm}\Big[h(I(\Psi^{-1}(s_+,s_-),\theta),\eta) \frac{|\xi|\,|\xi+\eta|}{2 }\Big] \frac{\sin(s_\pm t)}{t}\, ds _+\, ds_-\, d\theta\,.
\end{align}
Concerning the integral in \eqref{eq:F-smaller}, we perform a reverse change of variable and get
\begin{align*}
F^\leq_\pm(h) (t,\eta) = & - \overline \mu(t) \int_{ r\leq t^{-a}} \partial_{s_\pm}\Big[h(I(\Psi^{-1}(s_+,s_-),\theta),\eta) \frac{|\xi|\,|\xi+\eta|}{2 }\Big] \frac{\sin(s_\pm t)}{t} \frac{|\eta|r}{|\xi+\eta|\,|\xi|}\, dr\, dz\, d\theta\,.
\end{align*}
We then use the Leibniz rule to obtain 
\begin{multline}\label{eq:h-leibniz-aux}
    \partial_{s_\pm}\Big[h(I(\Psi^{-1}(s_+,s_-),\theta),\eta) \frac{|\xi|\,|\xi+\eta|}{2 }\Big]  \frac{|\eta|r}{|\xi+\eta|\,|\xi|}\\
    =  \frac12\partial_{s_\pm} I(\Psi^{-1}(s_+,s_-),\theta) \cdot \nabla_\xi h(\xi,\eta)|\eta| r \pm h(\xi,\eta) \frac{s_\pm}{4} \frac{|\eta|r}{|\xi+\eta|\,|\xi|}\,.
\end{multline}
Focusing on the first addend on the right-hand side, we have 
\[
\partial_{s_\pm} I(\Psi^{-1}(s_+,s_-),\theta)=\frac{s_\mp}{2|\eta|}\widehat{\eta}\pm\underbrace{\frac{s_+-s_-}{4r}}_{|\xi|/2r}\sigma-\frac{zs_\mp}{2r|\eta|}\sigma\,,
\]
which implies (recall that $|\xi|=\sqrt{r^2+z^2}\geq |r|,|z|$ and $s_\pm\leq |\xi|+|\eta|$)
\begin{equation}\label{eq:fstderPsi}
r |\eta| \partial_{s_\pm} I(\Psi^{-1}) (s_+,s_-),\theta)| \lesssim (\an \xi + \an \eta)^2\,.
\end{equation}
Moreover, from \(r\leq |\xi|,|\xi+\eta|\), one gets
\[
\frac{|s_\pm|}{4} \frac{|\eta|r}{|\xi+\eta|\,|\xi|} \leq \frac{|\eta|}{2}\,.
\]
Thus, we deduce
\[
|\eta|\,|F^\leq_\pm(h) (t,\eta) |\lesssim \overline \mu(t) t^{-1-a} \|h\|_{6,1}\,,
\]
which yields
\[
\||\eta| \,F^\leq_\pm(h)  \|_{L^1_t L^\infty_\eta} \lesssim \|h\|_{6,1}\,.
\]\par

We turn to~\(F_\pm^>\) in \eqref{eq:F-bigger}. Here, we perform an additional integration by parts (again with respect to $s_+$ and $s_-$ respectively) to get
\begin{align}
\notag F^>_+(h) (t,\eta) = &  \overline \mu(t) \int_{ (-|\eta|,\eta) \times (0,2\pi)} \Big[ \partial_{s_+}\Big[h(I(\Psi^{-1}(s_+,s_-),\theta),\eta) \frac{|\xi|\,|\xi+\eta|}{2 }\Big] \frac{\cos(s_+ t)}{t^2}\Big]_{T_+(t)}^\infty\, ds_-\, d\theta \\
&\label{eq:F-plus-bigger-2derivative} - \overline \mu(t) \int_{ r> t^{-a}} \partial_{s_+}^2\Big[h(I(\Psi^{-1}(s_+,s_-),\theta),\eta) \frac{|\xi|\,|\xi+\eta|}{2 }\Big] \frac{\cos(s_+ t)}{t^2}\, ds _+\, ds_- d\theta\,, \\
\notag F^>_-(h) (t,\eta) = & \overline \mu(t) \int_{ (\sqrt{|\eta|^2 + 4t^{-2a}},\infty)\times (0,2\pi)} \Big[\partial_{s_-}\Big[h(I(\Psi^{-1}(s_+,s_-),\theta),\eta) \frac{|\xi|\,|\xi+\eta|}{2 }\Big] \frac{\cos(s_- t)}{t^2}\Big]_{-T_-(t)}^{T_-(t)}\, ds _+\, d\theta \\
&\label{eq:F-minus-bigger-2derivative}- \overline \mu(t) \int_{ r> t^{-a}} \partial_{s_-}^2\Big[h(I(\Psi^{-1}(s_+,s_-),\theta),\eta) \frac{|\xi|\,|\xi+\eta|}{2 }\Big] \frac{\cos(s_- t)}{t^2}\, ds _+\, ds_-\, d\theta\,,
\end{align}
where
\begin{align*}
    T_+(t)  =& \left(\frac{4 t^{-2 a} + |\eta|^2 -s_-^2}{1 - s_-^2/|\eta|^2}\right)^{1/2}\,, \\
    T_-(t) = &  \left(\frac{s_+^2 - 4 t^{-2 a} - |\eta|^2 }{s_+^2/|\eta|^2 - 1}\right)^{1/2}\,.
\end{align*}
We obtain the formulae for~\(T_\pm\) by recalling \eqref{eq:r-function-of-s+s-}, and remarking that
\[
r> t^{-a} \quad \Leftrightarrow \quad s_+^2 + s_-^2 - \frac{s_+^2 s_-^2 }{|\eta^2|} > 4 t^{-2 a} + |\eta|^2\,.
\]

Recall that we have seen in \eqref{eq:h-leibniz-aux} that 
\begin{multline*}
 \partial_{s_\pm}\Big[h(I(\Psi^{-1}(s_+,s_-),\theta),\eta) \frac{|\xi|\,|\xi+\eta|}{2 }\Big]    \\
 =   \left( \frac{s_\mp}{2|\eta|}\widehat{\eta}\pm\frac{s_+-s_-}{4r}\sigma-\frac{zs_\mp}{2r|\eta|}\sigma\right) \cdot \nabla_\xi h(\xi,\eta)\frac{s_+^2 - s_-^2}{8} \pm h(\xi,\eta) \frac{s_\pm}{4}\,.
\end{multline*}
Therefore, the boundary terms respectively become
\begin{multline*}
\Big[ \partial_{s_+}\Big[h(I(\Psi^{-1}(s_+,s_-),\theta),\eta) \frac{|\xi|\,|\xi+\eta|}{2 }\Big] \frac{\cos(s_+ t)}{t^2}\Big]_{T_+(t)}^\infty\\
=- \left(\left( \frac{s_-}{2|\eta|}\widehat{\eta}+\frac{T_+(t)-s_-}{4r}\sigma-\frac{zs_-}{2r|\eta|}\sigma\right) \cdot \nabla_\xi h(\xi,\eta)\frac{T_+(t)^2 - s_-^2}{8} + h(\xi,\eta) \frac{T_+(t)}{4} \right) \frac{\cos(T_+(t) t)}{t^2}\,,
\end{multline*}
and
\begin{align*}
&\Big[\partial_{s_-}\Big[h(I(\Psi^{-1}(s_+,s_-),\theta),\eta) \frac{|\xi|\,|\xi+\eta|}{2 }\Big] \frac{\cos(s_- t)}{t^2}\Big]_{-T_-(t)}^{T_-(t)}\\
&=\Bigg(\left(\left( \frac{s_+}{2|\eta|}\widehat{\eta}+\frac{s_+-s_-}{4r}\sigma-\frac{zs_+}{2r|\eta|}\sigma\right) \cdot \nabla_\xi h(\xi,\eta)\frac{s_+^2 - s_-^2}{8} - h(\xi,\eta) \frac{s_-}{4}\right)\Bigg|_{s_-= T_-(t)}\\
&- \left(\left( \frac{s_+}{2|\eta|}\widehat{\eta}+\frac{s_+-s_-}{4r}\sigma-\frac{zs_+}{2r|\eta|}\sigma\right) \cdot \nabla_\xi h(\xi,\eta)\frac{s_+^2 - s_-^2}{8} - h(\xi,\eta) \frac{s_-}{4}\right)\Bigg|_{s_-= -T_-(t)} \Bigg) \frac{\cos(T_-(t)t)}{t^2}\,.
\end{align*}
We infer that
\begin{multline*}
    |\eta|\,\Big|\overline \mu(t) \int_{ (-|\eta|,\eta) \times (0,2\pi)} \Big[ \partial_{s_+}\Big[h(I(\Psi^{-1}(s_+,s_-),\theta),\eta) \frac{|\xi|\,|\xi+\eta|}{2 }\Big] \frac{\cos(s_+ t)}{t^2}\Big]_{T_+(t)}^\infty\, ds_-\, d\theta\, \Big| \\
    \lesssim \|h\|_{5,1} \overline\mu(t) t^{-2 + a}\,,
\end{multline*}
\begin{multline*}
    |\eta|\,\Big|\overline \mu(t) \int_{ (\sqrt{|\eta|^2 + 4t^{-2a}},\infty)\times (0,2\pi)} \Big[\partial_{s_-}\Big[h(I(\Psi^{-1}(s_+,s_-),\theta),\eta) \frac{|\xi|\,|\xi+\eta|}{2 }\Big] \frac{\cos(s_- t)}{t^2}\Big]_{-T_-(t)}^{T_-(t)}\,ds _+\,  d\theta\,\Big|\\
    \lesssim \|h\|_{6,1} |\eta|^{-1} \overline \mu(t) t^{-2 + a}\,.
\end{multline*}
Going back to \eqref{eq:F-plus-bigger-2derivative} and \eqref{eq:F-minus-bigger-2derivative}, by the Leibniz rule, we compute
\begin{align*}
\partial_{s_\pm}^2 \Big[ h\circ I \circ \Psi^{-1} \frac{|\xi|\,|\xi+\eta|}{2}\Big] = &\partial_{s_\pm}^2 I\circ\Psi^{-1} \cdot \nabla h \circ I \circ \Psi^{-1} \frac{|\xi|\,|\xi+\eta|}{2} \\
& + (\partial_{s_\pm} I \circ \Psi^{-1})^{\otimes 2} \cdot \nabla^{\otimes 2}h\circ I\circ \Psi^{-1} \frac{|\xi|\,|\xi+\eta|}{2} \\
& \pm \partial_{s_\pm} I\circ \Psi^{-1} \cdot \nabla h \circ I \circ \Psi^{-1} \frac{s_\pm}{4}\\
& \pm  \frac14 h\circ I\circ \Psi^{-1}\,.
\end{align*}
Set 
\[
F^2_\pm (h) (t,\eta) = \int_{r>t^{-a}} \partial_{s_\pm}^2 \Big[h\circ I \circ \Psi^{-1} \frac{|\xi|\,|\xi+\eta|}{2}\Big] \frac{\cos(ts_\pm)}{t^2}\,ds_+\, ds_-\, d\theta\,,
\]
that we split as
\[
F^2_\pm = F^3_\pm + F^4_\pm\,,
\]
with
\begin{equation*}
    F^3_\pm(h)(t,\eta) = \int_{r>t^{-a}} \Big[\pm \partial_{s_\pm} I\circ \Psi^{-1} \cdot \nabla h \circ I \circ \Psi^{-1} \frac{s_\pm}{4} \pm  \frac14 h\circ I\circ \Psi^{-1} \Big] \frac{\cos(ts_\pm)}{t^2}\,ds_+\, ds_-\, d\theta\,,
\end{equation*}
\begin{multline*}
    F^4_\pm(h) (t,\eta) = \int_{r>t^{-a}}\Big[\partial_{s_\pm}^2 I\circ\Psi^{-1} \cdot \nabla h \circ I \circ \Psi^{-1} \frac{|\xi|\,|\xi+\eta|}{2} \\
    + (\partial_{s_\pm} I \circ \Psi^{-1})^{\otimes 2} \cdot \nabla^{\otimes 2}h\circ I\circ \Psi^{-1} \frac{|\xi|\,|\xi+\eta|}{2}\Big] \frac{\cos(ts_\pm)}{t^2}\,ds_+\, ds_-\, d\theta\,.
\end{multline*}
For~\(F^4\), as we have already seen in~\eqref{eq:fstderPsi},
\[
|\partial_{s_\pm} I\circ \Psi^{-1} | \lesssim \frac{(\an\xi + \an{\eta})^2} {|\eta | r}\,.
\]
We also have 
\[
\partial_{s_\pm}^2I\circ\Psi^{-1} = (\pm A - B) \sigma\,,
\]
with
\begin{align*}
    A&=\frac{4r^2|\eta|-\left(s_+-s_-\right)^2|\eta|\pm 2zs_\mp\left(s_+-s_-\right)}{16r^3|\eta|}\,,\\
    B&=\frac{2s^2_\mp r^2|\eta|\mp s_\mp\left(s_+-s_-\right)z|\eta|^2+2 s^2_\mp z^2|\eta|}{8r^3|\eta|^3}\,.
\end{align*}
which implies
\[
|\partial_{s_\pm}^2 I \circ \Psi^{-1}| \lesssim \frac{(\an{\xi} + \an{\eta})^4}{r^3 |\eta|^2} \leq t^{3a}\frac{(\an{\xi} + \an{\eta})^4}{ |\eta|^2}\,.
\]
Therefore, since~\(s_-\) is integrated over~\((-|\eta|, |\eta|)\), we finally get
\[
\Big| F^4_\pm(t,\eta) \Big| \lesssim \|h\|_{8,2} |\eta|^{-1} t^{-2 + 3a}\,.
\]
On the other hand,
\[
\Big| F^3_\pm(t,\eta) \Big| \lesssim \|h\|_{6,1} |\eta|^{-1} t^{-2 +a}\,,
\]
which concludes the first part of the proof.\par

We now prove that
\begin{align*}
    \|\overline F_\pm(h)\|_{L^\infty} \lesssim &\|\,|\xi|\,|\xi+\eta|\,h\|_{6,2}\,,\\
    \|\widetilde F_\pm(h)\|_{L^1,L^\infty} \lesssim &\|\,|\xi|\,|\xi+\eta|\,h\|_{6,2}\,.
\end{align*}
The proof follows exactly the same previous steps with extra singularities at low frequencies. We repeat it setting 
\[
g(\xi,\eta) = |\xi|\, |\xi+\eta| \, h(\xi,\eta)
\]
and without reintroducing the previous notations.\par

Since
\[
\|\an{\eta}|\eta|\,F_\pm (h)\|_{L^\infty_{t,\eta}} \lesssim \|\an\eta \frac{|\eta|}{|\xi|\,|\xi+\eta|} g\|_{L^1_\xi L^\infty_\eta} \lesssim \big\| \an \eta\left( \frac1{|\xi|} + \frac1{|\xi+\eta|} \right) g \big\|_{L^1_\xi L^\infty_\eta} \lesssim \|g\|_{4,0}\,,
\]
we deduce
\[
\|\an\eta |\eta|F_\pm^\mu (h)\|_{L^1_tL^\infty_{\eta}} \leq \|\mu\|_{L^1} \|\,g\|_{4,0}\,.
\]\par

As before, assume now that~\(\eta \neq 0\). We perform the change of variables described by the diffeormorphism~\(\Psi\) in Lemma~\ref{lemma:diffeo} and get
\[
2|\eta|F_\pm^{\overline\mu} (h) (t,\eta) = \overline \mu(t) \int_{(|\eta|,\infty) \times (-|\eta|,|\eta|) \times (0,2\pi)} g(I(\Psi^{-1}(s_+,s_-),\theta),\eta)  \cos(s_\pm t)\, ds_+\, ds_-\, d\theta\,.
\]
We proceed with a first integration by parts to obtain
\begin{align*}
2 |\eta| F_+^{\overline\mu} (h) (t,\eta) = & \overline \mu(t) \int_{ (-|\eta|,|\eta|) \times (0,2\pi)} \Big[ g(I(\Psi^{-1}(s_+,s_-),\theta),\eta)  \frac{\sin(s_+ t)}{t}\Big]_{|\eta|}^\infty\, ds_-\,d\theta\\
& - \overline \mu(t) \int_{ (|\eta|,\infty) \times (-|\eta|,|\eta|) \times (0,2\pi)} \partial_{s_+}\Big[g(I(\Psi^{-1}(s_+,s_-),\theta),\eta) \Big] \frac{\sin(s_+ t)}{t}\, ds _+\, ds_-\, d\theta\,, \\
2|\eta|F_-^{\overline\mu} (h) (t,\eta) = & \overline \mu(t) \int_{ (|\eta|,\infty) \times (0,2\pi)} \Big[ g(I(\Psi^{-1}(s_+,s_-),\theta),\eta)  \frac{\sin(s_- t)}{t}\Big]_{-|\eta|}^{|\eta|}\, ds_+\, d\theta \\
& - \overline \mu(t) \int_{ (|\eta|,\infty) \times (-|\eta|,|\eta|) \times (0,2\pi)} \partial_{s_-}\Big[g(I(\Psi^{-1}(s_+,s_-),\theta),\eta) \Big] \frac{\sin(s_- t)}{t}\, ds _+\, ds_-\, d\theta\,.
\end{align*}
Thanks to Lemma~\ref{lemma:diffeo} and the polynomial decrease of~\(g\), we have
\begin{align*}
 \Big[ g(I(\Psi^{-1}(s_+,s_-),\theta),\eta)  \frac{\sin(s_+ t)}{t}\Big]_{|\eta|}^\infty  = & - g(\frac{s_- - |\eta|}{2} \widehat \eta,\eta)  \frac{\sin(|\eta| t)}{t}\,, \\
 \Big[ g(I(\Psi^{-1}(s_+,s_-),\theta),\eta) \frac{\sin(s_- t)}{t}\Big]_{-|\eta|}^{|\eta|} = & \frac{\sin(|\eta|t)}{t}  \Big( g(\frac{s_+ - |\eta|}{2}\widehat \eta,\eta) + g(\frac{-s_+ - |\eta|}{2}\widehat \eta,\eta) \Big)\,.
\end{align*}
Recalling that
\begin{align*}
2\overline F_+(h)(\eta) = & -\int_{(-|\eta|,|\eta|) \times (0,2\pi)} g(\frac{s_- - |\eta|}{2} \widehat \eta,\eta)\, ds_-\, d\theta\,, \\ 
2\overline F_-(h)(\eta) =& \int_{(|\eta|,\infty) \times (0,2\pi)}  \Big( g(\frac{s_+ - |\eta|}{2}\widehat \eta,\eta) + g(\frac{-s_+ - |\eta|}{2}\widehat \eta,\eta) \Big)\, ds_+\, d\theta\,,
\end{align*}
we infer 
\[
\|\an{\eta}^2 \overline F_\pm (h)\|_{L^\infty_\eta} \lesssim \|g\|_{4,0}\,,
\]
which implies
\[
\| \an\eta^2 \mu(t) \frac{\sin(|\eta|t)}{|\eta| t} \overline F_\pm(h)\|_{L^1_tL^\infty_\eta} \lesssim \|g\|_{4,0}.
\]
Hence, it remains to prove that~\(|\eta| F^1_\pm(h)\) belongs to~\(L^1_tL^\infty_\eta\), with
\begin{align*}
F^1_\pm(h) (t,\eta) = & -   \overline \mu(t) \int_{ (|\eta|,\infty) \times (-|\eta|,|\eta|) \times (0,2\pi)} \partial_{s_\pm}\Big[g(I(\Psi^{-1}(s_+,s_-),\theta),\eta) \Big] \frac{\sin(s_\pm t)}{t}\, ds _+\, ds_-\, d\theta\,. 
\end{align*}
Set~\(a \in (0,\frac13)\). We split the integral over the domains given respectively by~\(r\leq t^{-a}\) and by~\(r>t^{-a}\). 
Recall
\begin{align}
\label{eq:F-smaller-bis} F^\leq_\pm(h) (t,\eta) = & -  \overline \mu(t) \int_{ r\leq t^{-a}} \partial_{s_\pm}\Big[g(I(\Psi^{-1}(s_+,s_-),\theta),\eta) \Big] \frac{\sin(s_\pm t)}{t}\, ds _+\, ds_-\, d\theta\,, \\
\label{eq:F-bigger-bis}F^>_\pm(h) (t,\eta) = & - \overline \mu(t) \int_{ r> t^{-a}} \partial_{s_\pm}\Big[g(I(\Psi^{-1}(s_+,s_-),\theta),\eta) \Big] \frac{\sin(s_\pm t)}{t}\, ds _+\, ds_-\, d\theta\,.
\end{align}
Concerning the first integral in \eqref{eq:F-smaller-bis}, we perform a reverse change of variable and get
\begin{equation*}
F^\leq_\pm(h) (t,\eta) = - \overline \mu(t) \int_{ r\leq t^{-a}} \partial_{s_\pm}\Big[g(I(\Psi^{-1}(s_+,s_-),\theta),\eta) \Big] \frac{\sin(s_\pm t)}{t} \frac{|\eta|r}{|\xi+\eta|\,|\xi|}\, dr\, dz\, d\theta\, .
\end{equation*}
which implies 
\begin{equation*}
F^\leq_\pm(h) (t,\eta) = - \overline \mu(t) \int_{ r\leq t^{-a}} \partial_{s_\pm} I\circ \Psi^{-1}\cdot \nabla g(I(\Psi^{-1}(s_+,s_-),\theta),\eta) \frac{\sin(s_\pm t)}{t} \frac{|\eta|r}{|\xi+\eta|\,|\xi|}\, dr\, dz\, d\theta\, .
\end{equation*}
Recalling that
\begin{equation}\label{eq:fstderPsi-bis}
    |r |\eta| \partial_{s_\pm} I(\Psi^{-1}) (s_+,s_-),\theta)| \lesssim (\an \xi + \an \eta)^2
\end{equation}
we obtain
\[
|\eta| \, |F^\leq_\pm(h) (t,\eta)| \lesssim t^{-1} \overline \mu(t) \|g\|_{4,1} \int_{ r\leq t^{-a}} \frac{(\an \xi + \an \eta)^{-2} |\eta|}{|\xi+\eta|\,|\xi|}\, dr\, dz\, d\theta\,.
\]
Then, using that~\(\frac{|\eta|}{|\xi|\, |\xi+\eta|} \lesssim \frac{1}{|\xi|} + \frac1{|\xi +\eta|}\) and performing the change of variable~\(\xi \leftarrow - \xi -\eta\) yields
\[
|\eta| \, |F^\leq_\pm(h) (t,\eta)| \lesssim t^{-1} \overline \mu(t) \|g\|_{4,1} \int_{ r\leq t^{-a}} \frac{1 }{\an{\xi}^2|\xi|}\, dr\, dz\,. 
\]
Recalling that~\( |\xi| = \sqrt{z^2 + r^2}\), we finally obtain
\begin{align*}
|\eta| \, |F^\leq_\pm(h) (t,\eta)| & \lesssim t^{-1} \overline \mu(t) \|g\|_{4,1} \int_{ r\leq t^{-a}} \frac{1 }{(1+z^2 + r^2)\sqrt{z^2 + r^2}}\, dr\, dz \\
& \lesssim \frac1{t} \overline \mu(t) \|g\|_{4,1} \int_{ r\leq t^{-a}} \frac{1 }{(1+z^2 )\sqrt{|z| r}}\, dr\, dz \\
&\lesssim t^{-1-a/2} \overline \mu(t) \|g\|_{4,1}\,.
\end{align*}
which implies
\[
\||\eta| F^\leq_\pm(h)  \|_{L^1_t,L^\infty_\eta} \lesssim \|g\|_{4,1}\,.
\]\par

We turn to~\(F_\pm^>\) in \eqref{eq:F-bigger-bis}. Here, we proceed with an additional integration by parts to get
\begin{align}
\notag F^>_+(h) (t,\eta) = &  \overline \mu(t) \int_{ (-|\eta|,\eta) \times (0,2\pi)} \Big[ \partial_{s_+}\Big[g(I(\Psi^{-1}(s_+,s_-),\theta),\eta) \Big] \frac{\cos(s_+ t)}{t^2}\Big]_{T_+(t)}^\infty\, ds_-\, d\theta \\
&\label{eq:F-plus-bigger-2derivative-bis} - \overline \mu(t) \int_{ r> t^{-a}} \partial_{s_+}^2\Big[g(I(\Psi^{-1}(s_+,s_-),\theta),\eta) \Big] \frac{\cos(s_+ t)}{t^2}\, ds _+\, ds_- d\theta\,, \\
\notag F^>_-(h) (t,\eta) = & \overline \mu(t) \int_{ (\sqrt{|\eta|^2 + 4t^{-2a}},\infty)\times (0,2\pi)} \Big[\partial_{s_-}\Big[g(I(\Psi^{-1}(s_+,s_-),\theta),\eta) \Big] \frac{\cos(s_- t)}{t^2}\Big]_{-T_-(t)}^{T_-(t)}\, ds _+\, d\theta \\
&\label{eq:F-minus-bigger-2derivative-bis}- \overline \mu(t) \int_{ r> t^{-a}} \partial_{s_-}^2\Big[g(I(\Psi^{-1}(s_+,s_-),\theta),\eta) \Big] \frac{\cos(s_- t)}{t^2}\, ds _+\, ds_-\, d\theta\, .
\end{align}

Recall that we have seen in \eqref{eq:h-leibniz-aux} that 
\begin{align*}
 \partial_{s_\pm}\Big[g(I(\Psi^{-1}(s_+,s_-),\theta),\eta) \Big]    =  & \left( \frac{s_\mp}{2|\eta|}\widehat{\eta}\pm\frac{s_+-s_-}{4r}\sigma-\frac{zs_\mp}{2r|\eta|}\sigma\right) \cdot \nabla_\xi g(\xi,\eta)\, .
\end{align*}
Therefore, the boundary terms respectively become
\begin{multline*}
\Big[ \partial_{s_+}\Big[g(I(\Psi^{-1}(s_+,s_-),\theta),\eta)\Big] \frac{\cos(s_+ t)}{t^2}\Big]_{T_+(t)}^\infty\\
=- \left(\left( \frac{s_-}{2|\eta|}\widehat{\eta}+\frac{T_+(t)-s_-}{4r}\sigma-\frac{zs_-}{2r|\eta|}\sigma\right) \cdot \nabla_\xi g(\xi,\eta) \right) \frac{\cos(T_+(t) t)}{t^2}\,,
\end{multline*}
and
\begin{multline*}
\Big[\partial_{s_-}\Big[g(I(\Psi^{-1}(s_+,s_-),\theta),\eta) \Big] \frac{\cos(s_- t)}{t^2}\Big]_{-T_-(t)}^{T_-(t)} 
= \\
\Bigg(\left(\left( \frac{s_+}{2|\eta|}\widehat{\eta}+\frac{s_+-s_-}{4r}\sigma-\frac{zs_+}{2r|\eta|}\sigma\right) \cdot \nabla_\xi g(\xi,\eta)\right)\Bigg|_{s_-= T_-(t)}\\
- \left(\left( \frac{s_+}{2|\eta|}\widehat{\eta}+\frac{s_+-s_-}{4r}\sigma-\frac{zs_+}{2r|\eta|}\sigma\right) \cdot \nabla_\xi g(\xi,\eta) \right)\Bigg|_{s_-= -T_-(t)} \Bigg) \frac{\cos(T_-(t) t)}{t^2}\,.
\end{multline*}
We infer that
\begin{equation*}
    \Big||\eta|\,\overline \mu(t) \int_{ (-|\eta|,\eta) \times (0,2\pi)} \Big[ \partial_{s_+}\Big[g(I(\Psi^{-1}(s_+,s_-),\theta),\eta) \Big] \frac{\cos(s_+ t)}{t^2}\Big]_{S_+(t)}^\infty  ds_- d\theta \Big|\lesssim \|g\|_{4,1} \overline\mu(t) t^{-2 + a}\,,   
\end{equation*}
\begin{multline*}
    \Big||\eta|\,\overline \mu(t) \int_{ (\sqrt{|\eta|^2 + 4t^{-2a}},\infty)\times (0,2\pi)} \Big[\partial_{s_-}\Big[g(I(\Psi^{-1}(s_+,s_-),\theta),\eta) \Big] \frac{\cos(s_- t)}{t^2}\Big]_{-S_-(t)}^{S_-(t)} ds _+  d\theta\Big|\\
\lesssim \|g\|_{4,1} \overline \mu(t) t^{-2 + a}\,.
\end{multline*}
Going back to \eqref{eq:F-plus-bigger-2derivative-bis} and \eqref{eq:F-minus-bigger-2derivative-bis}, by the Leibniz rule, we compute
\begin{equation*}
\partial_{s_\pm}^2 \Big[ g\circ I \circ \Psi^{-1} \Big] = \partial_{s_\pm}^2 I\circ\Psi^{-1} \cdot \nabla g \circ I \circ \Psi^{-1}+ (\partial_{s_\pm} I \circ \Psi^{-1})^{\otimes 2} \cdot \nabla^{\otimes 2}g\circ I\circ \Psi^{-1}\,.
\end{equation*}
Setting
\[
F^2_\pm (h) (t,\eta) = \int_{r>t^{-a}} \partial_{s_\pm}^2 \Big[g\circ I \circ \Psi^{-1} \Big] \frac{\cos(ts_\pm)}{t^2}\, ds_+\, ds_-\, d\theta\,,
\]
we have
\begin{multline*}
    F^2_\pm(h) (t,\eta) = \int_{r>t^{-a}}\Big[\partial_{s_\pm}^2 I\circ\Psi^{-1} \cdot \nabla g \circ I \circ \Psi^{-1}  
    + (\partial_{s_\pm} I \circ \Psi^{-1})^{\otimes 2} \cdot \nabla^{\otimes 2}g\circ I\circ \Psi^{-1} \Big] \frac{\cos(ts_\pm)}{t^2}\, ds_+\, ds_-\, d\theta\,.
\end{multline*}
Starting from \eqref{eq:fstderPsi-bis}, we also get
\begin{align*}
|\partial_{s_\pm} I\circ \Psi^{-1} | \lesssim & \frac{(\an\xi + \an\eta)^2)} {|\eta | r} \leq t^a \frac{(\an \xi + \an \eta)^2}{|\eta|}\,,\\
|\partial_{s_\pm}^2 I \circ \Psi^{-1}| \lesssim & \frac{(\an{\xi} + \an{\eta})^4}{r^3 |\eta|^2} \leq t^{3a}\frac{(\an{\xi} + \an{\eta})^4}{ |\eta|^2}\,.
\end{align*}
Therefore, since~\(s_-\) is integrated over~\((-|\eta|, |\eta|)\), we finally get
\[
\Big| F^2_\pm(t,\eta) \Big| \lesssim \|g\|_{6,2} |\eta|^{-1} t^{-2 + 3a}\,,
\]
which concludes the proof.
\end{proof}
\begin{lemma}\label{stat:h-sin-decomposition}
Let~\(G_\pm\) be a linear map defined from~\(L^1_\xi L^\infty_\eta\) to~\(L^\infty_{t,\eta}\) as 
\[
G_\pm(h)(t,\eta) = \int_{\R^3} h(\xi,\eta) \sin(\Phi_\pm(\xi,\eta)) \, d\xi\, .
\]
There exists two linear maps~\(\overline G_\pm,\widetilde G_\pm\) such that~\(\overline G_\pm\) and~\(\widetilde G_\pm\) are continuous from~\(\frac{1}{(|\xi|^{-1}+|\eta +\xi|^{-1})}\mathcal S_{7,2}\) to respectively~\(\an{\eta}^{-1}L^\infty(\R^3)\) and~\(L^1(\R,L^\infty(\R^3))\) and satisfy, for all~\(t\in \R\) and all~\(\eta\in \R^3\),
\[
G_\pm(h)(t,\eta) = \frac{\sin(|\eta|t) - t |\eta| \cos(|\eta|t)}{|\eta|^2t^2} \overline G_\pm(h)(\eta) + |\eta|^{-2} \widetilde G_\pm(h)(t,\eta)\,.
\]
In other words, we have
\begin{align*}
    \|\an{\eta} \overline G_\pm(h)\|_{L^\infty} \lesssim &\min(\|\,|\xi|\,h\|_{m_1,2}, \|\,|\xi+\eta|\,h\|_{7,2})\,,\\
    \|\widetilde G_\pm(h)\|_{L^1,L^\infty} \lesssim &\min(\|\,|\xi|\,h\|_{m_1,2}, \|\,|\xi+\eta|\,h\|_{7,2})\,.
\end{align*}
\end{lemma}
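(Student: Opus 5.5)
The plan is to mirror the proof of \Cref{stat:h-cos-decomposition}, with $\cos(t\Phi_\pm)$ replaced by $\sin(t\Phi_\pm)$; the statement presumably means $\sin(t\Phi_\pm(\xi,\eta))$. The key observation is that an integration by parts of $\sin(s_\pm t)$ in $s_\pm$ produces $-\cos(s_\pm t)/t$, so the boundary term now carries $\cos(|\eta|t)/t$ instead of $\sin(|\eta|t)/t$. The function $\cos(|\eta|t)/(|\eta|t)$ is not integrable near $t=0$. This is why we should instead expect the profile
\[
\frac{\sin(|\eta|t)-t|\eta|\cos(|\eta|t)}{|\eta|^2t^2} = -\frac{d}{d(|\eta|t)}\Big(\frac{\sin(|\eta|t)}{|\eta|t}\Big),
\]
which is bounded near $t=0$, where it behaves like $|\eta|t/3$.

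\textbf{Step 1: small times.} Split $G_\pm = \mu(t)G_\pm+\overline\mu(t)G_\pm$ as before. The piece $\mu G_\pm$ is bounded by $\|\an\eta^2 h\|_{L^1_\xi L^\infty_\eta}$. The weight $(|\xi|^{-1}+|\xi+\eta|^{-1})^{-1}$ is a harmless integrable singularity in $\R^3$ for $h$ in the weighted class, so this piece lies in $L^1_tL^\infty_\eta$. Similarly, $\mu(t)$ times the profile above times $\overline G_\pm$ is absorbed into $\widetilde G_\pm$.

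\textbf{Step 2: large times, first integration by parts.} For $t\ge1$, change variables by $\Psi$ from \Cref{lemma:diffeo}. The integrand becomes $h\circ I\circ\Psi^{-1}\,\frac{|\xi||\xi+\eta|}{2|\eta|}\sin(s_\pm t)$. Integrate by parts once in $s_\pm$. The boundary terms at $s_+=|\eta|$ and $s_-=\pm|\eta|$ (where $r=0$, so $\xi$ is colinear to $\eta$) give $\frac{\cos(|\eta|t)}{t}$ times exactly the collinear integrals defining $\overline F_\pm$; call them $\overline G_\pm(h)$.

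To produce the stated profile, write
\[
\frac{\cos(|\eta|t)}{|\eta|t} = \frac{\sin(|\eta|t)}{|\eta|^2t^2} - \frac{\sin(|\eta|t)-t|\eta|\cos(|\eta|t)}{|\eta|^2t^2}.
\]
The first piece, multiplied by $\overline\mu(t)$, is $O(t^{-2}|\eta|^{-2})$ and is integrable on $t\ge1$, so it goes into $|\eta|^{-2}\widetilde G_\pm$. The bound $\|\an\eta\overline G_\pm(h)\|_{L^\infty}$ follows from the same computation as for $\overline F_\pm$. Because the Jacobian factor $\frac{|\xi||\xi+\eta|}{|\eta|}$ vanishes linearly at the collinear boundary, only one factor $|\xi|$ or $|\xi+\eta|$ is needed. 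This explains the weaker weight $(|\xi|^{-1}+|\xi+\eta|^{-1})^{-1}$ compared with $|\xi||\xi+\eta|$ in \Cref{stat:h-cos-decomposition}.

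\textbf{Step 3: large times, remainder.} The remaining bulk integral, involving $\partial_{s_\pm}[\dots]\cos(s_\pm t)/t$, is handled exactly as $F^1_\pm$ was:
\begin{itemize}
\item on $r\le t^{-a}$, undo the change of variables and use \eqref{eq:fstderPsi} to get a bound $t^{-1-a}$ (or $t^{-1-a/2}$ in the singular-weight version);
\item on $r>t^{-a}$, integrate by parts a second time, so that $\sin(s_\pm t)/t^2$ appears. The boundary terms at $T_\pm(t)$ are $O(t^{-2+a})$, and the bulk term is $O(t^{-2+3a})$ using the bounds on $\partial_{s_\pm}I\circ\Psi^{-1}$ and $\partial^2_{s_\pm}I\circ\Psi^{-1}$. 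Choosing $a<1/3$ makes everything integrable, with at most a factor $|\eta|^{-1}$ lost, which fits within $|\eta|^{-2}\widetilde G_\pm$.
\end{itemize}

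\textbf{Main obstacle.} The delicate point is the low-frequency bookkeeping with only one vanishing factor. Writing $g=(|\xi|^{-1}+|\xi+\eta|^{-1})^{-1}h$, the Jacobian contributes $\frac{|\xi||\xi+\eta|}{|\eta|}\,h = \frac{|\xi|+|\xi+\eta|}{|\eta|}\,g$. Since $|\xi|+|\xi+\eta| = s_+$, this factor is smooth in $(s_+,s_-)$ and adds only one power of $\an{\xi}+\an\eta$, which is compensated by $m_1=7$ instead of $6$. I would carry out the $F^\le$ and $F^>$ estimates with this substitution, checking that each $s_\pm$-derivative of $s_+/|\eta|$ costs at most $|\eta|^{-1}$. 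The symmetry $\xi\leftarrow-\xi-\eta$ gives the $\min$ of the two norms in the statement.
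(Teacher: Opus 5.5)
Your architecture is the paper's: time cut-off, change of variables by $\Psi$, one integration by parts producing $\cos(|\eta|t)/t$, the identity turning $\cos(|\eta|t)/(|\eta|t)$ into the stated profile plus an integrable $\sin(|\eta|t)/(|\eta|^2t^2)$ term, then the split $r\le t^{-a}$, $r>t^{-a}$ with a second integration by parts.

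\textbf{The gap is the weight.} You set $g=(|\xi|^{-1}+|\xi+\eta|^{-1})^{-1}h$. Your argument then bounds $\overline G_\pm(h)$ and $\widetilde G_\pm(h)$ by $\|(|\xi|^{-1}+|\xi+\eta|^{-1})^{-1}h\|_{7,2}$. The domain in the first sentence of the statement is ambiguous, but the claimed and used bound is $\min(\||\xi|h\|_{7,2},\||\xi+\eta|h\|_{7,2})$, and your bound does not imply it.

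To see this, take $k=|\xi|h$ smooth with $k(-\eta,\eta)\neq0$. Then $(|\xi|^{-1}+|\xi+\eta|^{-1})^{-1}h=\frac{|\xi+\eta|}{|\xi|+|\xi+\eta|}\,k$ has a conical singularity at $\xi=-\eta$, with second $\xi$-derivatives of size $|\eta|^{-1}|\xi+\eta|^{-1}$. So it is not in $\mathcal S_{7,2}$, and your estimate says nothing about such $h$.

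This is exactly the case needed downstream. In \Cref{lem:TT*}, for $j=2,3$, one has $h=\Tr\big(g^+_\eta\frac{\boldsymbol\alpha\cdot\xi}{|\xi|}\big)$ or $h=\Tr\big(g^+_\eta\frac{\boldsymbol\alpha\cdot(\xi+\eta)}{|\xi+\eta|}\big)$. In each case only one of the two one-sided weights makes $h$ smooth.

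Your closing appeal to the symmetry $\xi\leftarrow-\xi-\eta$ cannot repair this. Your symmetric weight is invariant under that symmetry, so it cannot be traded for a one-sided norm.

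\textbf{The fix} is to use a one-sided weight from the start, $g=|\xi|h$, as the paper does. Then
\[
h\,\frac{|\xi|\,|\xi+\eta|}{2|\eta|}=\frac{|\xi+\eta|}{2|\eta|}\,g=\frac{s_++s_-}{4|\eta|}\,g,
\]
which is again affine in $(s_+,s_-)$. All your bookkeeping then goes through verbatim, with $\partial_{s_\pm}(s_++s_-)=1$ replacing your $\partial_{s_\pm}s_+$, and yields bounds by $\||\xi|h\|_{7,2}$.

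Only then does the symmetry do its job. Let $\widetilde h(\xi,\eta)=h(-\xi-\eta,\eta)$. Since $\Phi_\pm(-\xi-\eta,\eta)=\pm\Phi_\pm(\xi,\eta)$, one gets $G_\pm(h)=\pm G_\pm(\widetilde h)$. Checking $\overline G_\pm(h)=\pm\overline G_\pm(\widetilde h)$ on the explicit collinear formulas then gives $\widetilde G_\pm(h)=\pm\widetilde G_\pm(\widetilde h)$. Finally $\||\xi|\widetilde h\|_{7,2}\simeq\||\xi+\eta|h\|_{7,2}$, which gives the second bound.

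For that check, note that your boundary terms are not exactly the collinear integrals of $\overline F_\pm$. Since $\cos$ is even, the $s_-$-boundary term produces the difference $h\big(\frac{s_+-|\eta|}{2}\widehat\eta,\eta\big)-h\big(\frac{-s_+-|\eta|}{2}\widehat\eta,\eta\big)$, not the sum appearing in $\overline F_-$. This sign is what makes $\overline G_-(h)=-\overline G_-(\widetilde h)$ consistent with $G_-(h)=-G_-(\widetilde h)$.
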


\begin{proof} Set~\(g(\xi,\eta) = |\xi| h(\xi,\eta)\). As in the proof of Lemma~\ref{stat:h-cos-decomposition}, we introduce a cut-off function~\(\mu\) and define
\[
G_\pm^\mu = \mu(t) G_\pm\,,\quad G_\pm^{\overline \mu} = \overline \mu(t) G_\pm\,.
\]
Since
\[
\|\an{\eta}^2\,G_\pm (h)\|_{L^\infty_{t,\eta}} \lesssim \| \frac{\an{\eta}^2}{|\xi|} g\|_{L^1_\xi L^\infty_\eta}  \lesssim \|g\|_{5,0}\,,
\]
we deduce
\[
\|\an{\eta}^2G_\pm^\mu (h)\|_{L^1_tL^\infty_{\eta}} \leq \|\mu\|_{L^1} \|\,g\|_{5,0}\,.
\]\par

Assume that~\(\eta \neq 0\). Performing a change of variables through the diffeormorphism~\(\Psi\) defined in Lemma~\ref{lemma:diffeo} yields
\[
4|\eta|G_\pm^{\overline\mu} (h) (t,\eta) = \overline \mu(t) \int_{(|\eta|,\infty) \times (-|\eta|,|\eta|) \times (0,2\pi)} g(I(\Psi^{-1}(s_+,s_-),\theta),\eta)(s_+ + s_-)  \sin(s_\pm t)\, ds_+\, ds_-\, d\theta\,.
\]
We then proceed with a first integration by parts and get
\begin{align*}
4 |\eta| G_+^{\overline\mu} (h) (t,\eta) = & -\overline \mu(t) \int_{ (-|\eta|,|\eta|) \times (0,2\pi)} \Big[ g(I(\Psi^{-1}(s_+,s_-),\theta),\eta) (s_+ +s_-)  \frac{\cos(s_+ t)}{t}\Big]_{|\eta|}^\infty\, ds_-\,d\theta\\
& + \overline \mu(t) \int_{ (|\eta|,\infty) \times (-|\eta|,|\eta|) \times (0,2\pi)} \partial_{s_+}\Big[g(I(\Psi^{-1}(s_+,s_-),\theta),\eta) (s_+ +s_-) \Big] \frac{\cos(s_+ t)}{t}\, ds _+\, ds_-\, d\theta\,, \\
4|\eta|G_-^{\overline\mu} (h) (t,\eta) = & -\overline \mu(t) \int_{ (|\eta|,\infty) \times (0,2\pi)} \Big[ g(I(\Psi^{-1}(s_+,s_-),\theta),\eta) (s_+ +s_-) \frac{\cos(s_- t)}{t}\Big]_{-|\eta|}^{|\eta|}\, ds_+\, d\theta \\
& + \overline \mu(t) \int_{ (|\eta|,\infty) \times (-|\eta|,|\eta|) \times (0,2\pi)} \partial_{s_-}\Big[g(I(\Psi^{-1}(s_+,s_-),\theta),\eta) (s_+ +s_-)\Big] \frac{\cos(s_- t)}{t}\, ds _+\, ds_-\, d\theta\,.
\end{align*}
Thanks to Lemma~\ref{lemma:diffeo} and the polynomial decrease of~\(g\), we have
\begin{equation*}
 \Big[ g(I(\Psi^{-1}(s_+,s_-),\theta),\eta) (s_+ +s_-) \frac{\cos(s_+ t)}{t}\Big]_{|\eta|}^\infty  = - g(\frac{s_- - |\eta|}{2} \widehat \eta,\eta) (|\eta| + s_-)  \frac{\cos(|\eta| t)}{t}\,,
\end{equation*}
\begin{multline*}
    \Big[ g(I(\Psi^{-1}(s_+,s_-),\theta),\eta) (s_+ +s_-) \frac{\cos(s_- t)}{t}\Big]_{-|\eta|}^{|\eta|} \\
    = \frac{\cos(|\eta|t)}{t}  \Big( g(\frac{s_+ - |\eta|}{2}\widehat \eta,\eta)(s_+ + |\eta|) - g(\frac{-s_+ - |\eta|}{2}\widehat \eta,\eta)  (s_+ - |\eta|) \Big)\,.
\end{multline*}
Setting
\begin{align*}
4\overline G_+(h)(\eta) = & \int_{ (-|\eta|,|\eta|) \times (0,2\pi)} g(\frac{s_- - |\eta|}{2} \widehat \eta,\eta) (|\eta| + s_-)\, ds_-\, d\theta\,, \\ 
4\overline G_-(h)(\eta) =&  \int_{ (|\eta|,\infty) \times (0,2\pi)} \Big( g(\frac{s_+ - |\eta|}{2}\widehat \eta,\eta)(s_+ + |\eta|) - g(\frac{-s_+ - |\eta|}{2}\widehat \eta,\eta)  (s_+ - |\eta|) \Big)\, ds_+\, d\theta\,,
\end{align*}
we deduce 
\[
\|\an{\eta}^2 \overline G_\pm (h)\|_{L^\infty_\eta} \lesssim \|g\|_{5,0}\,.
\]
Additionally, we have
\[
G^{\overline\mu}_\pm(h) = -\overline \mu \frac{\cos(|\eta|t)}{t |\eta|} \overline G_\pm(h)  + |\eta|^{-2} \overline \mu(t) G^1_\pm (h)\,,
\]
with
\[
G^1_\pm(h) = |\eta |\overline \mu(t) \int_{ (|\eta|,\infty) \times (-|\eta|,|\eta|) \times (0,2\pi)} \partial_{s_\pm}\Big[g(I(\Psi^{-1}(s_+,s_-),\theta),\eta) (s_+ +s_-) \Big] \frac{\cos(s_\pm t)}{t}\, ds _+\, ds_-\, d\theta\, .
\]
Now, we observe that
\[
-\frac{\cos(|\eta|t)}{t |\eta|} = \frac{\sin(|\eta| t) - t|\eta|\cos(|\eta|t)}{t^2|\eta|^2}  - \frac{\sin(|\eta| t)}{|\eta|^2 t^2}
\]
and
\[
\|\, |\eta|^2 \frac{\sin(|\eta| t)}{|\eta|^2 t^2} \overline \mu(t) \overline G_\pm (h)\|_{L^1_tL^\infty_\eta} \lesssim \|g\|_{5,0}\,.
\]
Since~\(\frac{\sin(|\eta| t) - t|\eta|\cos(|\eta|t)}{t^2|\eta|^2}\) is bounded, we obtain
\[
\| |\eta|^2 \frac{\sin(|\eta| t) - t|\eta|\cos(|\eta|t)}{t^2|\eta|^2} \mu \overline G_\pm(h)\|_{L^1_t,L^\infty_\eta} \lesssim \|g\|_{5,0}\,.
\]
Therefore, one has
\begin{multline*}
    G^{\overline \mu}_\pm(h) = \frac{\sin(|\eta| t) - t|\eta|\cos(|\eta|t)}{t^2|\eta|^2} \overline G_\pm(h) + \mu \frac{\sin(|\eta| t) - t|\eta|\cos(|\eta|t)}{t^2|\eta|^2} \overline G_\pm(h) \\
    + \overline \mu \frac{\sin(|\eta| t)}{|\eta|^2 t^2} \overline G_\pm(h) + |\eta|^{-2} G^1_\pm(h)
\end{multline*}
and it remains to prove that~\(G^1_\pm(h)\) belongs to~\(L^1_t L^\infty_\eta\).\par

Set~\(a \in (0,\frac13)\) and split the integral over the domains respectively given by~\(r\leq t^{-a}\) and~\(r>t^{-a}\). Define
\begin{align}
\label{eq:G-smaller} G^\leq_\pm(h) (t,\eta) = & |\eta|  \overline \mu(t) \int_{ r\leq t^{-a}} \partial_{s_\pm}\Big[g(I(\Psi^{-1}(s_+,s_-),\theta),\eta) (s_++s_-) \Big] \frac{\cos(s_\pm t)}{t}\, ds _+\, ds_-\, d\theta\,, \\
\label{eq:G-bigger} G^>_\pm(h) (t,\eta) = & |\eta| \overline \mu(t) \int_{ r> t^{-a}} \partial_{s_\pm}\Big[g(I(\Psi^{-1}(s_+,s_-),\theta),\eta)(s_++s_-) \Big] \frac{\cos(s_\pm t)}{t}\, ds _+\, ds_-\, d\theta\,.
\end{align}
Concerning the first integral in \eqref{eq:G-smaller}, we proceed with a reverse change of variable to get
\begin{align*}
G^\leq_\pm(h) (t,\eta) = & \overline \mu(t) \int_{ r\leq t^{-a}} \partial_{s_\pm}\Big[g(I(\Psi^{-1}(s_+,s_-),\theta),\eta) (s_++s_-) \Big] \frac{\cos(s_\pm t)}{t} \frac{|\eta|^2r}{|\xi+\eta|\,|\xi|}\, dr\, dz\, d\theta\, .
\end{align*}
Thus, we obtain 
\begin{multline*}
G^\leq_\pm(h) (t,\eta) =  \overline \mu(t) \int_{ r\leq t^{-a}} \partial_{s_\pm} I\circ \Psi^{-1}\cdot \nabla g(\xi,\eta) \frac{\cos(s_\pm t)}{t} \frac{|\eta|^2r}{|\xi+\eta|}\, dr\, dz\, d\theta \\
+ \overline \mu(t) \int_{ r\leq t^{-a}}  g(\xi,\eta) \frac{\cos(\Phi_\pm t)}{t} \frac{|\eta|^2}{|\xi+\eta|\;|\xi|} r\, dr\, dz\, d\theta\,.
\end{multline*}
For the integral in the second line of the right-hand side, recalling that
\[
\frac{|\eta|^2}{|\xi+\eta|\;|\xi|} \leq |\eta|\Big( \frac1{|\xi|} + \frac{1}{|\xi+\eta|}\Big)\,,
\] 
we have
\begin{align*}
    \int_{ r\leq t^{-a}} |g(\xi,\eta)| \frac{|\eta|^2}{|\xi+\eta|\;|\xi|} d\xi &\lesssim  \|g\|_{2,0} \int_{r\leq t^{-a}} \frac{d\xi}{|\xi| \an{\xi}} \\
    &\lesssim \|g\|_{2,0} \int_{r\leq t^{-a}} \frac{r\, dr\, dz}{\sqrt{|z|r} \sqrt{1+z^2+r^2}}\\
    &\lesssim \|g\|_{2,0} t^{-3a/2}\,.
\end{align*}
For the integral in the first line, recalling~\eqref{eq:fstderPsi},
\[
|r |\eta| \partial_{s_\pm} I(\Psi^{-1}) (s_+,s_-),\theta)| \lesssim (\an \xi + \an \eta)^2\,,
\]
one has
\begin{align*}
\int_{ r\leq t^{-a}} |\partial_{s_\pm} I\circ \Psi^{-1}\cdot \nabla g(\xi,\eta)|&  \frac{|\eta|^2r}{|\xi+\eta|}\, dr\, dz\, d\theta\\
&\lesssim \int_{r\leq t^{-a}} |\nabla g| (\an{\xi} + \an{\eta})^2 \frac{|\eta|}{|\xi +\eta|}\, dr\, dz\, d\theta \\
&\lesssim \|g\|_{4,1} \int_{r\leq t^{-a}} \frac{dr\,dz}{\sqrt{r|z+|\eta||}\sqrt{1+(z+|\eta|)^2 +r^2}}\\
&\lesssim t^{-a/2} \|g\|_{4,1}\,.
\end{align*}
Therefore, we deduce
\[
\|G^\leq_\pm(h) \|_{L^1_t L^2_\eta} \lesssim \|g\|_{4,1}\,.
\]\par

We now go back to~\(G_\pm^>\) in \eqref{eq:G-bigger}. Here, we proceed with a second integration by parts to get
\begin{align*}
G^>_+(h) (t,\eta) = &   |\eta| \overline \mu(t) \int_{ (-|\eta|,\eta) \times (0,2\pi)} \Big[ \partial_{s_+}\Big[g(I(\Psi^{-1}(s_+,s_-),\theta),\eta) (s_++s_-) \Big] \frac{\sin(s_+ t)}{t^2}\Big]_{T_+(t)}^\infty\, ds_-\, d\theta \\
& - |\eta| \overline \mu(t) \int_{ r> t^{-a}} \partial_{s_+}^2\Big[g(I(\Psi^{-1}(s_+,s_-),\theta),\eta) (s_++s_-)\Big] \frac{\sin(s_+ t)}{t^2}\, ds _+\, ds_-\, d\theta\,, \\
G^>_-(h) (t,\eta) = & |\eta| \overline \mu(t) \int_{ (\sqrt{|\eta|^2 + 4t^{-2a}},\infty)\times (0,2\pi)} \Big[\partial_{s_-}\Big[g(I(\Psi^{-1}(s_+,s_-),\theta),\eta)(s_++s_-) \Big] \frac{\sin(s_- t)}{t^2}\Big]_{-T_-(t)}^{T_-(t)}\, ds _+\, d\theta \\
&- |\eta|  \overline \mu(t) \int_{ r> t^{-a}} \partial_{s_-}^2\Big[g(I(\Psi^{-1}(s_+,s_-),\theta),\eta)(s_++s_-) \Big] \frac{\sin(s_- t)}{t^2}\, ds _+\, ds_-\, d\theta\,,
\end{align*}
where we recall
\begin{align*}
    T_+(t)  =& \left(\frac{4 t^{-2 a} + |\eta|^2 -s_-^2}{1 - s_-^2/|\eta|^2}\right)^{1/2}\,, \\
    T_-(t) = &  \left(\frac{s_+^2 - 4 t^{-2 a} - |\eta|^2 }{s_+^2/|\eta|^2 - 1}\right)^{1/2}\,.
\end{align*}
We have 
\begin{multline*}
 \partial_{s_\pm}\Big[g(I(\Psi^{-1}(s_+,s_-),\theta),\eta) (s_++s_-) \Big]    = (s_++s_-)\left( \frac{s_\mp}{2|\eta|}\widehat{\eta}\pm\frac{s_+-s_-}{4r}\sigma-\frac{zs_\mp}{2r|\eta|}\sigma\right) \cdot \nabla_\xi g(\xi,\eta) \\
 + g(\xi,\eta)\,,
\end{multline*}
which makes the boundary terms respectively become
\begin{multline*}
\Big[ \partial_{s_+}\Big[g(I(\Psi^{-1}(s_+,s_-),\theta),\eta) (s_++s_-)\Big] \frac{\sin(s_+ t)}{t^2}\Big]_{T_+(t)}^\infty = \\
- (T_+(t) +s_-)\left(\left( \frac{s_-}{2|\eta|}\widehat{\eta}+\frac{T_+(t) -s_-}{4r}\sigma-\frac{zs_-}{2r|\eta|}\sigma\right) \cdot \nabla_\xi g(\xi,\eta) \right) \frac{\sin(T_+(t) t)}{t^2} + g(\xi,\eta) \frac{T_+(t) t}{t^2}\,,
\end{multline*}
and
\begin{multline*}
\Big[\partial_{s_-}\Big[g(I(\Psi^{-1}(s_+,s_-),\theta),\eta) (s_++s_-) \Big] \frac{\sin(s_- t)}{t^2}\Big]_{-T_-(t)}^{T_-(t)} 
= \\
\Bigg(\left( (s_++s_-)\left( \frac{s_+}{2|\eta|}\widehat{\eta}+\frac{s_+-s_-}{4r}\sigma-\frac{zs_+}{2r|\eta|}\sigma\right) \cdot \nabla_\xi g(\xi,\eta) + g(\xi,\eta)\right)\Bigg|_{s_-= T_-(t)}\\
+ \left((s_++s_-)\left( \frac{s_+}{2|\eta|}\widehat{\eta}+\frac{s_+-s_-}{4r}\sigma-\frac{zs_+}{2r|\eta|}\sigma\right) \cdot \nabla_\xi g(\xi,\eta) + g(\xi,\eta)\right)\Bigg|_{s_-= -T_-(t)} \Bigg) \frac{\sin(T_-(t) t)}{t^2}\,.
\end{multline*}
We deduce that
\begin{multline*}
    \Big||\eta|\,\overline \mu(t) \int_{ (-|\eta|,\eta) \times (0,2\pi)} \Big[ \partial_{s_+}\Big[g(I(\Psi^{-1}(s_+,s_-),\theta),\eta)(s_++s_-) \Big] \frac{\sin(s_+ t)}{t^2}\Big]_{T_+(t)}^\infty\, ds_-\, d\theta \Big|\\
    \lesssim \|g\|_{4,1} \overline\mu(t) t^{-2 + a}\,,
\end{multline*}
\begin{multline*}
    \Big||\eta|\,\overline \mu(t) \int_{ (\sqrt{|\eta|^2 + 4t^{-2a}},\infty)\times (0,2\pi)} \Big[\partial_{s_-}\Big[g(I(\Psi^{-1}(s_+,s_-),\theta),\eta)(s_++s_-) \Big] \frac{\cos(s_- t)}{t^2}\Big]_{-T_-(t)}^{T_-(t)}\, ds _+\, d\theta \Big|\\
    \lesssim \|g\|_{5,1} \overline \mu(t) t^{-2 + a}\,.
\end{multline*}
We also have
\begin{align*}
\partial_{s_\pm}^2 \Big[ g\circ I \circ \Psi^{-1} (s_++s_-)\Big] = &(s_++s_-)\partial_{s_\pm}^2 I\circ\Psi^{-1} \cdot \nabla g \circ I \circ \Psi^{-1} \\
& + (s_++s_-) (\partial_{s_\pm} I \circ \Psi^{-1})^{\otimes 2} \cdot \nabla^{\otimes 2}g\circ I\circ \Psi^{-1}  \\
& + 2\partial_{s_\pm} I \circ \Psi^{-1} \cdot \nabla g \circ I \circ \Psi^{-1}\,.
\end{align*}
ans setting
\[
G^2_\pm (h) (t,\eta) = \int_{r>t^{-a}} \partial_{s_\pm}^2 \Big[g\circ I \circ \Psi^{-1} (s_++s_-)\Big] \frac{\sin(ts_\pm)}{t^2}\, ds_+\, ds_-\, d\theta\,,
\]
we get
\begin{multline*}
    G^2_\pm(h) (t,\eta) = \\
    |\eta|\int_{r>t^{-a}} (s_++s_-)\Big[\partial_{s_\pm}^2 I\circ\Psi^{-1} \cdot \nabla g \circ I \circ \Psi^{-1}  
    + |\eta |(\partial_{s_\pm} I \circ \Psi^{-1})^{\otimes 2} \cdot \nabla^{\otimes 2}g\circ I\circ \Psi^{-1} \Big] \frac{\sin(ts_\pm)}{t^2}\, ds_+\, ds_-\, d\theta \\
    + \int_{r>t^{-a}} \partial_{s_\pm} I\circ\Psi^{-1} \cdot \nabla g\frac{\sin(ts_\pm)}{t^2}\, ds_+\, ds_-\, d\theta\,.
\end{multline*}
As we have already seen,
\begin{align*}
|\partial_{s_\pm} I\circ \Psi^{-1} | \lesssim & \frac{(\an\xi + \an\eta)^2)} {|\eta | r} \leq t^a \frac{(\an \xi + \an \eta)^2}{|\eta|}\,,\\
|\partial_{s_\pm}^2 I \circ \Psi^{-1}| \lesssim & \frac{(\an{\xi} + \an{\eta})^4}{r^3 |\eta|^2} \leq t^{3a}\frac{(\an{\xi} + \an{\eta})^4}{ |\eta|^2}\,.
\end{align*}
Therefore, since~\(s_-\) is integrated over~\((-|\eta|, |\eta|)\), we get
\[
\Big| G^2_\pm(t,\eta) \Big| \lesssim \|g\|_{7,2} |\eta|^{-1} t^{-2 + 3a}\,.
\]
Notice that by the change of variables~\(\xi \leftarrow -\xi - \eta\), we have
\[
G_\pm(h)(t,\eta) = \pm \int_{\R^3} h(-\xi-\eta,\eta) \sin(\Phi_\pm(\xi,\eta))\, d\xi\, .
\]
Setting~\(\widetilde h(\xi,\eta) = h(-\xi - \eta,\eta)\), we get
\[
G_\pm (h) = \pm G_\pm (\widetilde h)\,.
\]
We also have 
\begin{align*}
8\overline G_+(h)(\eta) = & \int_{ (-|\eta|,|\eta|) \times (0,2\pi)} h(\frac{s_- - |\eta|}{2} \widehat \eta,\eta) (|\eta|^2 - s_-^2)\, ds_-\, d\theta\,, \\ 
8\overline G_-(h)(\eta) =&  \int_{ (|\eta|,\infty) \times (0,2\pi)} \Big( h(\frac{s_+ - |\eta|}{2}\widehat \eta,\eta) - h(\frac{-s_+ - |\eta|}{2}\widehat \eta,\eta)   \Big) (s_+^2 - |\eta|^2)\, ds_+\, d\theta\,.
\end{align*}
Noticing that
\begin{align*}
h(\frac{s_- - |\eta|}{2} \widehat \eta,\eta) = & \widetilde h (\frac{-s_- - |\eta|}{2} \widehat \eta,\eta)\,, \\
h(\frac{s_+ - |\eta|}{2}\widehat \eta,\eta) = & \widetilde h(\frac{-s_+ - |\eta|}{2}\widehat \eta,\eta)\,, \\
h(\frac{-s_+ - |\eta|}{2}\widehat \eta,\eta) = & \widetilde h(\frac{s_+ - |\eta|}{2}\widehat \eta,\eta)\,,
\end{align*}
the change of variable~\(s_- \leftarrow -s_-\) in the first integral yields
\[
\overline G_\pm (h) = \pm \overline G(\widetilde h)\,,
\]
which implies
\[
\widetilde G_\pm(h) = \pm \widetilde G(\widetilde h)\,.
\]
Finally, we deduce
\begin{align*}
    \|\overline G_\pm (h)\|_{L^\infty_\eta} & \lesssim \||\xi |\, \widetilde h\|_{7,2} \lesssim \||\xi+\eta|\, h\|_{7,2}\,,\\
    \|\widetilde G(h)\|_{L^1_t L^\infty_\eta} & \lesssim \||\xi |\, \widetilde h\|_{7,2} \lesssim \||\xi+\eta|\, h\|_{7,2}\,,
\end{align*}
which concludes the proof.
\end{proof}

\subsection{Estimates of \texorpdfstring{\(L_2\) and~\(L_1\)}{L2 and L1}}

In this section, we assume that~\(w\) is real-valued.

\begin{proposition}\label{prop:L2-continuity-of-L2}
Let $g$ be as in~\Cref{rmk:L_2}. Assume 
\begin{itemize}
    \item $\an{\,\cdot\,}^{6} \widehat  w\in L^\infty (\R^3)\,$;
    \item ~\(g : \R^3 \rightarrow \mathcal M_4(\C)\) is an even function of class $\mathrm{C}^3$ and such that
    \[
    \an{\,\cdot\,}^{7} \sum_{|k| \le 3} \partial^k_{\xi} g  \in L^\infty(\R^3)\,.
    \]
\end{itemize}
Then, \(L_2\) defined in~\eqref{eq:L2-fourier-symbol} is bounded from~\(L^2(\R \times \R^3)\) to itself and its operator norm is bounded by
\[
\|\an{\,\cdot\,}^{6}\widehat w\|_\infty \|\an{\,\cdot\,}^{7}\sum_{|k|\leq 3}\partial^k_{\xi} g \|_\infty\,.
\]\par
Furthermore, the operator $L_2^\infty$, defined as
\[
\widehat{L^\infty_2(V)} = \int_\R F(t-\tau,\eta) \widehat V(\tau,\eta)\, d\tau\,, 
\]
with $F$ given by \eqref{eq:L2-F-kernel}, is also bounded from~\(L^2(\R \times \R^3)\) to itself.
Moreover, the operator $L_2$ is bounded from $L^2_t \dot H^\nu_x$ to itself, for any $\nu \in \R$.
\end{proposition}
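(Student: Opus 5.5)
The plan is to treat $L_2$, for each fixed $\eta$, as a (truncated) time convolution with kernel $F(\cdot,\eta)$. By Plancherel in $x$, $\|L_2V\|_{L^2_{t,x}} = \|\widehat{L_2V}\|_{L^2_{t,\eta}}$. Hence it suffices to show
\[
\sup_{\eta}\|F(\cdot,\eta)\|_{L^1_t}<\infty ,
\]
or, more generally, that $F(t,\eta)$ is a sum of an $L^1_t L^\infty_\eta$ piece and a piece whose time Fourier transform is bounded uniformly in $\eta$. Young's inequality in $t$ (applied to $\mathbbm 1_{[0,t]}$-truncated or full convolutions) then gives boundedness of both $L_2$ and $L_2^\infty$. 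Since the multiplier acts diagonally in $\eta$, weighting by $|\eta|^\nu$ commutes with it, and boundedness on $L^2_t\dot H^\nu_x$ follows at once. For the truncated operator $L_2$, the integral $\int_0^t$ splits as $\mathbbm 1_{t>0}\int_0^t$ plus the analogue for $t<0$. Each piece is a convolution of $\mathbbm 1_{\tau>0}\widehat V$ with $\mathbbm 1_{s>0}F(s,\eta)$, which has the same $L^1$ norm as $F$, so the same estimates apply.

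Next I will expand $F$ with the product formula
\[
\sin(t|\xi|)\sin(t|\xi+\eta|)=\tfrac12\bigl(\cos(t\Phi_-)-\cos(t\Phi_+)\bigr),
\]
which writes
\[
F=\tfrac12\widehat w(\eta)\bigl(F_-(h)-F_+(h)\bigr),\qquad
h(\xi,\eta)=\frac{\Tr\bigl(g^-_\eta(\xi)\,(\xi\wedge\eta)\cdot A\boldsymbol\alpha\bigr)}{|\xi|\,|\xi+\eta|}.
\]
This is exactly the setting of \Cref{stat:h-cos-decomposition}, in its extended form on $\frac{1}{|\xi|\,|\xi+\eta|}\mathcal S_{6,2}$. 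Two observations are key. First, $h$ vanishes when $\xi$ and $\eta$ are colinear, because $\xi\wedge\eta=0$ there. Hence $\overline F_\pm(h)=0$, and the non-integrable piece $\frac{\sin(|\eta|t)}{|\eta|t}$ disappears. Second, we are left with $F = \frac12\widehat w(\eta)|\eta|^{-2}(\widetilde F_-(h)-\widetilde F_+(h))$ and the bound
\[
\|\widetilde F_\pm(h)\|_{L^1_tL^\infty_\eta}\lesssim \bigl\|\,|\xi|\,|\xi+\eta|\,h\bigr\|_{6,2}.
\]

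The main obstacle is controlling the factor $|\eta|^{-2}$ at low frequencies together with the norm $\|\,|\xi||\xi+\eta| h\|_{6,2}$. The quantity $|\xi||\xi+\eta|h=\Tr(g^-_\eta(\xi)(\xi\wedge\eta)\cdot A\boldsymbol\alpha)$ contains one factor $|\eta|$ from $\xi\wedge\eta$. A second factor $|\eta|$ comes from $g^-_\eta(\xi)=g(\xi)-g(\xi+\eta)=-\int_0^1\eta\cdot\nabla g(\xi+s\eta)\,ds$. Thus I plan to write
\[
|\xi||\xi+\eta|h = |\eta|^2\,k(\xi,\eta),\qquad
k(\xi,\eta)=-\int_0^1\Tr\Bigl(\widehat\eta\cdot\nabla g(\xi+s\eta)\,(\xi\wedge\widehat\eta)\cdot A\boldsymbol\alpha\Bigr)\,ds,
\]
and apply the lemma to $k$ instead, using linearity and the fact that the constants there depend on $\eta$ only through the weights. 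Then I need to check $\|k\|_{6,2}\lesssim\|\an{\cdot}^7\sum_{|k|\le3}\partial^kg\|_\infty$. Here $k$ costs one derivative of $g$, and two more $\xi$-derivatives give three in total. The factor $|\xi|$ from $\xi\wedge\widehat\eta$ is absorbed by the $\an{\cdot}^7$ decay of $\nabla g(\xi+s\eta)$. The weight $\an\xi^6+\an\eta^6$ is the delicate point: the decay is only in $\xi+s\eta$, and $\an\xi\lesssim\an{\xi+s\eta}\an\eta$.

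For large $|\eta|$ I would therefore not use the Taylor expansion. Instead I would bound $g^-_\eta$ directly by $|g(\xi)|+|g(\xi+\eta)|$ and use the crude bound on $h$ with one factor $|\eta|$. The remaining polynomial growth in $\an\eta$ (at worst $\an\eta^{6}$ after Peetre-type inequalities) is compensated by $\an\eta^{6}\widehat w\in L^\infty$. If the weights do not close exactly, I would interpolate between the small-$|\eta|$ Taylor representation and the large-$|\eta|$ direct one, cut at $|\eta|\sim1$. The final bound on $\sup_\eta\|F(\cdot,\eta)\|_{L^1_t}$ is then the product stated in the proposition.
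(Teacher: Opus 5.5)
Your proposal is correct and follows essentially the paper's own argument. You reduce to $\|\mathbbm 1_{t\geq 0}F\|_{L^1_tL^\infty_\eta}$ by Young's inequality in time, which commutes with the weight $|\eta|^\nu$. You split $F$ into $F_\pm$ by the product-to-sum formula and use $\xi\wedge\eta=0$ on colinear points to get $\overline F_\pm(h)=0$ in \Cref{stat:h-cos-decomposition}. You then absorb $|\eta|^{-2}$ with one factor from $\xi\wedge\eta$ and one from the mean value theorem on $g^-_\eta$ when $|\eta|<1$, while for $|\eta|\geq 1$ the polynomial loss $\an{\eta}^6$ is paid by $\widehat w$. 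The paper further splits $|\eta|<1$ into $|\xi|<2$ and $|\xi|\geq 2$, which your bound $\an{\xi}\lesssim\an{\xi+s\eta}$ handles at once.

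One side remark of yours is inaccurate: a kernel piece with uniformly bounded time Fourier transform would suffice for the full convolution $L_2^\infty$, but not for the truncated $L_2$. For instance, $\mathbbm 1_{t>0}\frac{\sin(|\eta|t)}{|\eta|t}$ has a logarithmically singular transform at $\omega=\pm|\eta|$. This plays no role in your proof, since $\overline F_\pm(h)=0$.
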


\begin{proof}
Recall that
\begin{equation*}
    F(t,\eta) = \widehat w(\eta) \int \Tr \left( g^-_\eta(\xi) (\xi\wedge \eta)\cdot A \boldsymbol\alpha\right) \frac{\sin(t|\xi|)}{|\xi|} \frac{\sin(t|\eta+\xi|)}{|\eta+\xi|}\,d\xi
\end{equation*}
such that
\[
\widehat{L_2(V)} (t,\eta) = \int_0^t F(t-\tau,\eta) \widehat V(\tau, \eta)d\tau\,.
\]
To get the result, we prove that~\(F \in L^1(\R,L^\infty(\R^d))\). Indeed,
\begin{align*}
    \|  \widehat{L_2(V)}\left(t,\eta\right)\|_{L^2_tL^2_\eta}&\leq \| \raisebox{0.5ex}{$\chi$}_{t\geq 0}\left(\cdot\right)F\left(\cdot,\eta\right)\|_{L^1_tL^\infty_\eta}\|  \widehat{V}\|_{L^2_tL^2_\eta}\,,\\
    \|\widehat{L^\infty_2(V)}\left(t,\eta\right)\|_{L^2_tL^2_\eta}&\leq \|F\left(\cdot,\eta\right)\|_{L^1_tL^\infty_\eta}\|  \widehat{V}\|_{L^2_tL^2_\eta}\,,
\end{align*}  
and, for any $\nu \in \R$,
\begin{align*}
  \|  L_2(V)\|_{L^2_t \dot H^\nu_x}   & = \| | \eta |^\nu \widehat{L_2(V)}\left(t,\eta\right)\|_{L^2_tL^2_\eta} = \| \int_0^t F(t-\tau,\eta) | \eta|^\nu\widehat V(\tau, \eta)d\tau\|_{L^2_tL^2_\eta} \\
 & \le \| \raisebox{0.5ex}{$\chi$}_{t\geq 0}\left(\cdot\right)F\left(\cdot,\eta\right)\|_{L^1_tL^\infty_\eta}\|  V\|_{L^2_t \dot H^\nu_x}\,.
\end{align*}
Standard trigonometric identities allow us to decompose $F$ as
\begin{equation*}
    F(t, \eta) = F_+ (t, \eta) + F_- (t, \eta)\,,
\end{equation*}
where
\begin{equation*}
    F_\pm (t, \eta) = \mp \frac {\widehat w(\eta)}2 \int \Tr \left( g^-_\eta(\xi) (\xi\wedge \eta)\cdot A\boldsymbol\alpha \right) \frac{\cos \big ( t ( |\eta + \xi| \pm |\xi|) \big )}{|\xi||\eta+\xi|}d\xi\,.
\end{equation*}
Then, noticing that $\xi \wedge \eta =0$ if $\xi$ and $\eta$ are collinear, by~\Cref{stat:h-cos-decomposition} we get 
\begin{equation*}
   \|\raisebox{0.5ex}{$\chi$}_{t\geq 0}\left(\cdot\right)F\left(\cdot,\eta\right)\|_{L^1_tL^\infty_\eta} \lesssim \| \widehat w(\eta) |\eta|^{-2}  \Tr \left( g^-_\eta(\xi)(\xi\wedge \eta)\cdot A \boldsymbol\alpha\right)\|_{6,2}\,.
\end{equation*}
Focusing on the $6,2$--norm on the right-hand side, we compute
\[
\partial_\xi^k \left(\widehat w(\eta) |\eta|^{-2}  \Tr \left( g^-_\eta(\xi) (\xi\wedge \eta)\cdot A \boldsymbol\alpha \right) \right)= \widehat w(\eta) |\eta|^{-1} \Tr \left( \partial_\xi^k (g_\eta^- \xi)\wedge \widehat \eta \cdot A \boldsymbol\alpha\right)\,,
\]
where $\widehat\eta=\eta/|\eta|$ denotes the unit vector, and
\[
\partial_\xi^k (g_\eta^- \xi) = (\partial_\xi^k g_\eta^-) \xi + k(\partial_\xi^{k-1} g_\eta^-)\,.
\]
Now, if~\(|\eta| \geq 1\), we get that
\[
(\an{\eta} + \an{\xi})^{6} \widehat w(\eta) |\eta|^{-1} \Tr \left( \partial_\xi^k (g_\eta^- \xi)\wedge \widehat \eta \cdot A \boldsymbol\alpha\right) \lesssim (\an{\eta} + \an{\xi})^{6} \widehat w(\eta)\min(\an{\xi},\an{\xi+\eta})(|\partial^k_\xi g_\eta^-| + |\partial^{k-1}_\xi g_\eta^-|)
\]
and, using that~\(\an{\eta} + \an{\xi} \lesssim \an{\eta} + \an{\xi +\eta}\), we have
\begin{align*}
(\an{\eta} + \an{\xi})^{6} \widehat w(\eta) |\eta|^{-1} \Tr \left( \partial_\xi^k (g_\eta^- \xi)\wedge \widehat \eta \cdot A \boldsymbol\alpha\right) &\lesssim (\an{\eta} + \an{\xi})^{6} \an{\xi} |\widehat w(\eta)|(|\partial^k_\xi g (\xi)| + |\partial^{k-1}_\xi g (\xi)|) \\
&\lesssim \|\an{\eta}^6 \widehat w\|_{L^\infty} \|\an{\xi}^7 (|\partial^k_\xi g|+|\partial^{k-1}_\xi g|)\|_{L^\infty}\,.
\end{align*}
Secondly, if~\(|\eta|< 1\) and~\(|\xi| < 2\), we have
\begin{align*}
(\an{\eta} + \an{\xi})^{6} \widehat w(\eta) |\eta|^{-1} \Tr \left( \partial_\xi^k (g_\eta^- \xi)\wedge \widehat \eta \cdot A \boldsymbol\alpha\right) &\lesssim |\eta|^{-1}\|\widehat w\|_{L^\infty}(|\partial^k_\xi g_\eta^- (\xi)| + |\partial^{k-1}_\xi g_\eta^- (\xi)|)\\
&\lesssim \| \widehat w\|_{L^\infty} \| |\partial^{k+1}_\xi g|+|\partial^{k}_\xi g|\|_{L^\infty}\,.
\end{align*}
Lastly, if~\(|\eta| < 1\) and~\(|\xi| \geq 2\), we have~\(|\xi| \geq 2 |\eta|\) which implies, for all~\(z\in [\xi,\xi+\eta]\),~\(\an z \geq \frac12 \an{\xi}\). Thus, we deduce
\begin{align*}
(\an{\eta} + \an{\xi})^{6} \widehat w(\eta) |\eta|^{-1} \Tr \left( \partial_\xi^k (g_\eta^- \xi)\wedge \widehat \eta \cdot A \boldsymbol\alpha\right)&\lesssim \an{\xi}^7|\eta|^{-1}\|\widehat w\|_{L^\infty}(|\partial^k_\xi g_\eta^- (\xi)| + |\partial^{k-1}_\xi g_\eta^- (\xi)|) \\
&\lesssim \| \widehat w\|_{L^\infty} \an{\xi}^7\sup_{[\xi,\xi+\eta]}(|\partial^{k+1}_\xi g|+|\partial^{k}_\xi g|) \\
&\lesssim  \| \widehat w\|_{L^\infty} \|\an{\xi}^7 (|\partial^{k+1}_\xi g|+|\partial^{k}_\xi g|)\|_{L^\infty}\,.
\end{align*}
Finally, all the previous estimates yield
\begin{equation*}
   \|F\left(\cdot,\eta\right)\|_{L^1_tL^\infty_\eta} \lesssim \| \langle \eta \rangle^{6} \widehat w (\eta) \|_{\infty} \| \langle \xi \rangle^{7} \sum_{|k| \le 3}\partial^k_{\xi} g(\xi) \|_{\infty}\,,
\end{equation*}
which concludes the proof.
\end{proof}
\begin{corollary}\label{coro:boundedL2} If additionally~\(\an{\,\cdot\,}^8\widehat w\in L^\infty(\R^3)\), then both~\(L_2\) and~\(L^\infty_2\) are bounded from~\(L^2_t\dot H^{\nu}\) to~\(L^2_t\dot H^{\nu+2}\) for any $\nu\in\R$.
\end{corollary}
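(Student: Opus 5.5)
The plan is to show that the kernel $F$ from \eqref{eq:L2-F-kernel} satisfies the sharper bound
\[
\|\,|\eta|^{-2} F(\cdot,\eta)\|_{L^1_tL^\infty_\eta} < \infty\,.
\]
Both the operators $L_2$ and $L_2^\infty$ act diagonally in $\eta$. So, exactly as in the proof of \Cref{prop:L2-continuity-of-L2}, Young's inequality in time will give
\[
\|\,|\eta|^{\nu+2}\widehat{L_2(V)}\|_{L^2_{t,\eta}} \le \|\,|\eta|^{-2}F\|_{L^1_tL^\infty_\eta}\,\|\,|\eta|^{\nu}\widehat V\|_{L^2_{t,\eta}}\,,
\]
and the same for $L_2^\infty$ without the cut-off $\chi_{t\ge0}$. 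This is the claimed boundedness from $L^2_t\dot H^\nu$ to $L^2_t\dot H^{\nu+2}$. Everything therefore reduces to gaining the factor $|\eta|^{-2}$ in the kernel.

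To get this gain I would repeat the argument of \Cref{prop:L2-continuity-of-L2} with $\widehat w$ replaced by $|\eta|^{-2}\widehat w$. I would not use the decomposition of \Cref{stat:h-cos-decomposition} in its raw form, because it loses $|\eta|^{-2}$ in front of $\widetilde F_\pm$. Instead I would apply it to the symbol
\[
h(\xi,\eta) = |\eta|^{-2}\widehat w(\eta)\,\frac{\Tr\left(g^-_\eta(\xi)(\xi\wedge\eta)\cdot A\boldsymbol\alpha\right)}{|\xi|\,|\xi+\eta|}\,,
\]
writing $F=F_++F_-$ with the cosines $\cos(t\Phi_\pm)$. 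Since $h$ vanishes when $\xi$ and $\eta$ are collinear, $\overline F_\pm(h)=0$. We then get $|\eta|^{-2}F_\pm = |\eta|^{-2}\widetilde F_\pm(h)$, and $\widetilde F_\pm(h)$ is controlled in $L^1_tL^\infty_\eta$ by
\[
\big\|\,|\eta|^{-2}\widehat w(\eta)\,\Tr\left(g^-_\eta(\xi)(\xi\wedge\eta)\cdot A\boldsymbol\alpha\right)\big\|_{6,2}\,,
\]
by the extended statement of \Cref{stat:h-cos-decomposition}. The overall factor is then $|\eta|^{-4}\widehat w(\eta)$ times the trace term, which gains a power $|\eta|^{-2}$ compared to the previous proof.

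The case analysis then follows the previous proof. The key inputs are the factor $|\eta|$ coming from $\xi\wedge\eta$ and the factor $|\eta|$ coming from $g^-_\eta = g(\xi)-g(\xi+\eta)$, controlled by the mean value theorem by $|\eta|\sup_{[\xi,\xi+\eta]}|\nabla g|$. Together they absorb two powers of $|\eta|^{-1}$.

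\emph{Large frequencies, $|\eta|\ge1$.} Here $|\eta|^{-4}$ is harmless. The extra decay $\an{\eta}^8\widehat w\in L^\infty$ (instead of $\an{\eta}^6$) compensates the weights $(\an\xi+\an\eta)^6$ together with the factors of $\xi$ that are not controlled by $g$. This uses that $g^-_\eta$ is bounded by $|g(\xi)|+|g(\xi+\eta)|$ and the inequality $\an\xi+\an\eta\lesssim\an\eta+\an{\xi+\eta}$.

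\emph{Small frequencies, $|\eta|<1$.} This is the main obstacle. We need $|\eta|^{-4}\,|g^-_\eta|\,|\xi\wedge\eta|$, together with its two $\xi$-derivatives, to be bounded. But $g^-_\eta$ gives only $|\eta|$ and $\xi\wedge\eta$ gives only $|\eta|$, so a factor $|\eta|^{-2}$ remains. To recover it I would exploit evenness of $g$ and the symmetrisation $\xi\leftarrow-\xi-\eta$ already used in \Cref{rmk:L_2}. Under this change of variables $g^-_\eta$ is odd while $(\xi\wedge\eta)\cdot A\boldsymbol\alpha$ and $|\xi|\,|\xi+\eta|$ are invariant. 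So only the part of $g^-_\eta$ that is odd under $\xi\mapsto-\xi-\eta$ contributes. Combining this with the expansion
\[
g(\xi)-g(\xi+\eta) = -\eta\cdot\nabla g\!\left(\xi+\tfrac{\eta}{2}\right) + O\!\left(|\eta|^3\sup|\nabla^3 g|\right)
\]
around the midpoint $\xi+\eta/2$ should produce the missing smallness. This is where the $C^3$ regularity of $g$ enters.

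If this symmetry gain proves insufficient, I would fall back on the oscillatory structure. Before integrating by parts, use $\sin(t|\xi|)\sin(t|\xi+\eta|)$: the $\Phi_-$ phase ranges over an interval of length $2|\eta|$, which supplies an additional $|\eta|$ in the bounds on $\widetilde F_-$. For $\widetilde F_+$, the corresponding gain should come from the Jacobian factor $|\eta|$ in \eqref{eq:Psi-jacobian}.
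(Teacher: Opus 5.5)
Your reduction has the power of $|\eta|$ upside down, and this is a genuine gap. Since $\widehat{L_2(V)}(t,\eta)=\int_0^tF(t-\tau,\eta)\widehat V(\tau,\eta)\,d\tau$, one has $|\eta|^{\nu+2}\widehat{L_2(V)}(t,\eta)=\int_0^t|\eta|^{2}F(t-\tau,\eta)\,|\eta|^{\nu}\widehat V(\tau,\eta)\,d\tau$. So Young's inequality needs $|\eta|^{2}F\in L^1_tL^\infty_\eta$. A bound on $|\eta|^{-2}F$ would instead give $L^2_t\dot H^\nu\to L^2_t\dot H^{\nu-2}$. With the correct weight nothing remains to be done, and this is the paper's proof. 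Indeed, $|\eta|^2F$ is exactly the kernel \eqref{eq:L2-F-kernel} built from $w_0$ with $\widehat w_0(\eta)=|\eta|^2\widehat w(\eta)$ (and the same $g$). Since $\an{\eta}^6|\widehat w_0(\eta)|\le\an{\eta}^8|\widehat w(\eta)|$, \Cref{prop:L2-continuity-of-L2} applied to $w_0$ gives the claim for $L_2$ and $L_2^\infty$ at once. In particular, the cost lies at large $|\eta|$, where the two extra powers are paid by the strengthened decay of $\widehat w$. The region $|\eta|<1$ is easier than in the Proposition, since $|\eta|^2\le1$ there. Your case analysis is reversed accordingly. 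With your factor $|\eta|^{-4}$, large frequencies would be easier than in the Proposition and the $\an{\cdot}^8$ hypothesis would play no role. The ``main obstacle'' at $|\eta|<1$ is an artefact of the sign error.

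Independently, the cancellation you propose for that obstacle is not there. Under $\xi\mapsto-\xi-\eta$ one has $\xi\wedge\eta\mapsto-\xi\wedge\eta$, so $(\xi\wedge\eta)\cdot A\boldsymbol\alpha$ is odd, not invariant. Combined with the oddness of $g^-_\eta$ and the invariance of $\sin(t|\xi|)\sin(t|\xi+\eta|)/(|\xi|\,|\xi+\eta|)$, the integrand is invariant and the reflection yields nothing. Moreover, $g^-_\eta$ is odd in its entirety, so passing to its odd part removes nothing. The leading term $-\eta\cdot\nabla g(\xi+\frac{\eta}{2})$ of your midpoint expansion is still of order $|\eta|$. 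The fallback via the $s_-$-range and the Jacobian is only a heuristic.

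For what it is worth, the structure that does cancel here is algebraic. By \Cref{lemma:exp}, $(\xi\wedge\eta)\cdot A\boldsymbol\alpha=\frac{1}{2i}[\boldsymbol\alpha\cdot\xi,\boldsymbol\alpha\cdot(\xi+\eta)]$. Since $g(\xi)$ commutes with $\boldsymbol\alpha\cdot\xi$ and $g(\xi+\eta)$ with $\boldsymbol\alpha\cdot(\xi+\eta)$, cyclicity of the trace gives, for every matrix $B$, $\Tr\big(g(\xi)[\boldsymbol\alpha\cdot\xi,B]\big)=\Tr\big([g(\xi),\boldsymbol\alpha\cdot\xi]\,B\big)=0$ and $\Tr\big(g(\xi+\eta)[B,\boldsymbol\alpha\cdot(\xi+\eta)]\big)=\Tr\big([\boldsymbol\alpha\cdot(\xi+\eta),g(\xi+\eta)]\,B\big)=0$. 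Hence the integrand of \eqref{eq:L2-F-kernel} actually vanishes pointwise. Your proposal does not use this, and it is not the route taken in the paper.
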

\begin{proof}
The result follows immediately from an application of~\Cref{prop:L2-continuity-of-L2} to the function $w_0$ defined by $\widehat{w}_0(\eta) = |\eta|^2\widehat{w}(\eta)$, while keeping the function $g$ unchanged.
\end{proof}

\begin{proposition}
\label{prop:est_L1}
    Let $g$ be as in~\Cref{rmk:L_2}. Assume that 
    \begin{itemize}
        \item $\an{\,\cdot\,}^{4} \widehat  w\in L^\infty (\R^3)\,$;
        \item ~\(g : \R^3 \rightarrow \mathcal M_4(\C)\) is an even function of class $\mathrm{C}^2$ and such that
        \[
        \an{\,\cdot\,}^{8} \sum_{|k| \le 2} \partial^k_{\xi} g  \in L^\infty(\R^3)\,.
        \]
    \end{itemize}
    Then, the linear operator
    \begin{equation}
        L_1(V) = -i \int_0^t S(t-\tau)  [ w \ast V ]Y d\tau
    \end{equation}
    is continuous from $L^2_t\dot{H}^{s-1}_x$ into $L^2_\omega L^p_t L^q_x$, for any $p>2$ and $q \ge 2$ such that $\frac 1p + \frac 1q = \frac 12$ and $s = \frac 32 - \frac 3q - \frac 1p$,
    and its operator norm is bounded by
    \begin{equation*}
        \|\an{\,\cdot\,}^4\widehat w\|_{L^\infty} \|\an{\,\cdot\,}^8\sum_{k=0}^2 \partial_\xi^k g\|_{L^\infty}\,.
    \end{equation*}
\end{proposition}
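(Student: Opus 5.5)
By \Cref{prop:TTstar}, Assertion~(3), the Christ--Kiselev lemma reduces the claim to the same bound for $L_1^\infty(V) = S(t)\Theta^*(w*V)$ from $L^2_t\dot H^{s-1}_x$ to $L^2_\omega L^p_tL^q_x$; this is legitimate since $p>2$. Since $S(t)$ is the massless Dirac propagator (up to the harmless phase $e^{-it\widetilde m}$), I will use the standard deterministic Strichartz estimate for the wave/Dirac flow in $\R^3$:
\[
\|S(t)u\|_{L^p_tL^q_x}\lesssim \|u\|_{\dot H^{s}_x},\qquad \tfrac1p+\tfrac1q=\tfrac12,\ s=\tfrac32-\tfrac3q-\tfrac1p,\ p>2 .
\]
This is the non-endpoint wave-admissible range, with $\frac2p+\frac2q=1$ satisfying $\frac2p+\frac{2}{q}\le 1$. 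Applying it pointwise in $\omega$ and taking the $L^2_\omega$ norm gives
\[
\|L_1^\infty(V)\|_{L^2_\omega L^p_tL^q_x}\lesssim \|\Theta^*(w*V)\|_{L^2_\omega \dot H^s_x}.
\]
It therefore suffices to show that $\Theta^*$ is bounded from the image of $w*\,\cdot\,$ into $L^2_\omega\dot H^s_x$.

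For this I use a $TT^*$ argument with the Fourier multiplier $|D|^{s}$. Since $|D|^s$ commutes with $S$ and $|D|^s S(-\tau)[UY]$ only involves multiplying the Fourier transform by $|\cdot|^s$, we have
\[
\|\Theta^*U\|^2_{L^2_\omega\dot H^s_x}=\an{U,\Theta_s\Theta^*U}_{L^2_{t,x}},
\]
where $\Theta_s$ is $\Theta$ with $S(t)$ replaced by $S(t)|D|^{2s}$. Repeating the computation of \Cref{prop:TTstar}, Assertion~\eqref{ass:ThetaThetastar}, with this extra weight, the operator $\Theta_s\Theta^*$ becomes a Fourier multiplier in $\eta$ acting by convolution in time against the kernel
\[
M_s(t,\eta)=\int \Tr\big[g(\xi)e^{it\boldsymbol\alpha\cdot\xi}e^{-it\boldsymbol\alpha\cdot(\xi+\eta)}\big]|\xi+\eta|^{2s}\,d\xi .
\]
I then apply \Cref{lemma:exp} to expand the product of exponentials and use $AB=\frac12([A,B]+\{A,B\})$. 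The commutator part gives $\frac{\sin\sin}{|\xi||\xi+\eta|}$ terms, which combine into $\cos(t\Phi_\pm)$. The anticommutator part gives $\cos\cos$ and $\sin\sin$ terms, again producing $\cos(t\Phi_\pm)$, together with mixed $\cos\cdot\sin$ terms, which produce $\sin(t\Phi_\pm)$ with weights $\frac{\boldsymbol\alpha\cdot\xi}{|\xi|}$ or $\frac{\boldsymbol\alpha\cdot(\xi+\eta)}{|\xi+\eta|}$. The cosine terms are handled by \Cref{stat:h-cos-decomposition} and the sine terms by \Cref{stat:h-sin-decomposition}. In each case $M_s$ splits as
\[
M_s=\frac{\sin(|\eta| t)}{|\eta|t}\,\overline M(\eta)+\frac{\sin(|\eta| t)-t|\eta|\cos(|\eta| t)}{|\eta|^2t^2}\,\overline N(\eta)+|\eta|^{-2}\widetilde M(t,\eta),
\]
with $\an{\eta}\overline M,\ \an{\eta}\overline N\in L^\infty$ and $\widetilde M\in L^1_tL^\infty_\eta$. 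The $\widetilde M$ part is bounded on $L^2_{t}$ for each $\eta$ by Young's inequality, with a loss of $|\eta|^{-2}$. The parts with the explicit time profiles are not $L^1_t$. However, their Fourier transforms in time are bounded uniformly: the transform of $\frac{\sin(|\eta|t)}{|\eta|t}$ is $\frac{\pi}{|\eta|}\mathbbm 1_{|\tau|\le|\eta|}$, and a similar statement holds for the second profile. By Plancherel in $t$, these parts are therefore bounded on $L^2_t$ with a loss of $|\eta|^{-1}$.

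Combining the bounds gives
\[
|\an{U,\Theta_s\Theta^*U}|\lesssim \int\!\!\int (|\eta|^{-1}+|\eta|^{-2})\,|\widehat U(\tau,\eta)|^2,
\]
with the high-frequency growth from $|\xi+\eta|^{2s}$ absorbed by the $\an{\cdot}^8$ decay of $g$. Here $s\le 1/2$, and this is exactly where the $\an{\cdot}^8$ weight and the $C^2$ regularity enter, through the norms $\|\cdot\|_{6,2}$ and $\|\cdot\|_{7,2}$ in the two lemmas. Taking $U=w*V$, so that $\widehat U=(2\pi)^{3/2}\widehat w\,\widehat V$, the assumption $\an{\cdot}^4\widehat w\in L^\infty$ absorbs all high-frequency weights. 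What remains is the low-frequency weight $|\eta|^{-2}$, which together with the $|\eta|^{2s}$-type homogeneity gives the norm $\|V\|_{L^2_t\dot H^{s-1}_x}$. The constants collect into the stated product $\|\an{\cdot}^4\widehat w\|_{L^\infty}\|\an{\cdot}^8\sum_{k\le2}\partial^k_\xi g\|_{L^\infty}$.

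I expect the main obstacle to be the bookkeeping of the singular weights. The symbols fed into \Cref{stat:h-cos-decomposition,stat:h-sin-decomposition} must be shown to lie in $\frac{1}{|\xi||\xi+\eta|}\mathcal S_{6,2}$, respectively $\frac{1}{|\xi|^{-1}+|\xi+\eta|^{-1}}\mathcal S_{7,2}$, even with the factor $|\xi+\eta|^{2s}$ present, which is not smooth at $\xi=-\eta$. My first attempt would be to use the symmetry $\xi\leftarrow-\xi-\eta$ to move that factor onto $|\xi|^{2s}$ where convenient, together with the $\min$ form of \Cref{stat:h-sin-decomposition}. If that fails, I would fall back on placing the weight $|D|^s$ on the output of $S(t)$ via Sobolev embedding, and bounding $\Theta^*$ only in $L^2_\omega L^2_x$ with the weight $|\eta|^{s-1}$ carried on $V$.
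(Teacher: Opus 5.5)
Your reduction is sound: Christ--Kiselev, Strichartz, and the weighted $TT^*$ identity, including your formula for $M_s$. The final bookkeeping, however, fails at low frequency.

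\textbf{The low-frequency step.} The factor produced by $|D|^{2s}$ is $|\xi+\eta|^{2s}$, a weight at the \emph{output} frequency. As $\eta\to0$ it tends to $|\xi|^{2s}$, which is of size one on the support of $g$. So there is no ``$|\eta|^{2s}$-type homogeneity'' to pair with $|\eta|^{-2}$. Your own displayed bound, with $\widehat U=(2\pi)^{3/2}\widehat w\,\widehat V$, gives at best
\[
\|\Theta^*(w*V)\|^2_{L^2_\omega\dot H^s_x}\lesssim\int\!\!\int\big(|\eta|^{-1}+|\eta|^{-2}\big)|\widehat w(\eta)|^2|\widehat V(\tau,\eta)|^2\,d\tau\,d\eta\lesssim\|V\|^2_{L^2_t\dot H^{-1}_x}.
\]
For $p<\infty$ we have $s=\frac2p>0$, so $\int_{|\eta|<1}|\eta|^{-2}|\widehat V|^2$ is not controlled by $\int_{|\eta|<1}|\eta|^{2s-2}|\widehat V|^2$. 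What you actually prove is boundedness from $L^2_t\dot H^{-1}_x$, which agrees with the statement only at $p=\infty$. Your fallback (carrying $|\eta|^{s-1}$ on $V$ and mapping $\Theta^*$ into $L^2_{\omega,x}$) runs into the same mismatch.

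\textbf{This is not only a loss in the two lemmas.} Take the Fourier transform in $t$. Then $\widehat{M_s}(\omega,\eta)$ is a sum of integrals $\int h\,\delta(\omega\mp\Phi_\pm)\,d\xi$. The Jacobian of \Cref{lemma:diffeo} bounds these by $O(|\eta|^{-1})$ at low frequency, with no derivatives on $h$, so the non-smoothness of $|\xi+\eta|^{2s}$ is irrelevant. This route closes your argument precisely when $2s-2\le-1$, i.e.\ $p\ge4$.

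It cannot do better. Take $g=\phi(|\xi|)\mathbbm 1_{\C^4}$ with $0\le\phi\not\equiv0$. The $\cos(t\Phi_-)$ part of $M_s$ carries the nonnegative weight $2\phi(|\xi|)|\xi+\eta|^{2s}\big(1+\frac{\xi\cdot(\xi+\eta)}{|\xi|\,|\xi+\eta|}\big)$. This gives $\widehat{M_s}(\omega,\eta)\ge c/|\eta|$ for $|\omega|<|\eta|$ and $|\eta|$ small. Hence, if $\widehat w(0)\neq0$, the map $\Theta^*(w*\cdot)$ is not bounded from $L^2_t\dot H^{s-1}_x$ to $L^2_\omega\dot H^s_x$ when $s>\frac12$. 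Factoring $L_1^\infty=S(t)\Theta^*(w*\cdot)$ through $L^2_\omega\dot H^s_x$ therefore cannot reach $2<p<4$.

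\textbf{Comparison with the paper.} The paper obtains this factorization by quoting \Cref{lem:TT*} with $\nu=s-1$. That lemma's proof only bounds $TT^*$. But $\|T^*V\|^2_{L^2_\omega\dot H^{\nu+1}_x}=\an{V,T|D|^{2\nu+2}T^*V}$ reduces to $TT^*$ only when $\nu=-1$. Your weighted operator $\Theta_s\Theta^*$ is the correct object, so you are not missing an idea that the paper supplies: the gap sits at the same step in both arguments.
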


\begin{proof}
   With the notation of~\Cref{prop:TTstar}, it is sufficient to prove that~\(L^\infty_1\) is continuous from $L^2_t\dot{H}^{s-1}_x$ to $L^p_t L^q_x L^2_\omega$. Recall that 
    \[
    L^\infty_1 (V) = S(t) \underbrace{\Theta^*(w*V)}_{\eqqcolon\mathcal{L}_1^\infty}\,.
    \]
    By~\Cref{lem:TT*}, $\mathcal L_1^\infty$ is bounded from $L^2_t \dot{H}^{s-1}_x$ into $L^2_\omega \dot{H}^s_x$. Indeed, $\mathcal L_1^\infty = T^*$, where $T$ is given by \eqref{eq:T}. Then, the continuity of $L_1^\infty$ follows from an easy application of Strichartz estimates for the Dirac operator, which can be found in Lemma~\ref{lemma:Str_est}. Indeed,
    \begin{equation*}
        \|L_1^\infty\left(V\right)\|_{L^2_\omega L^p_tL^q_x}=\|S\left(t\right) \mathcal L_1^\infty\left( V\right)\|_{L^2_\omega L^p_tL^q_x}\leq C_{p,q}\|\mathcal L_1^\infty \left(V\right)\|_{L^2_\omega \dot{H}^s_x } \le C \| V \|_{L^2_t \dot H^{s-1}_x}\,.
    \end{equation*}  
\end{proof}

\begin{lemma}
\label{lem:TT*}
   Let $g$ be as in~\Cref{rmk:L_2}. Assume that 
    \begin{itemize}
        \item $\an{\,\cdot\,}^{4} \widehat  w \in L^\infty (\R^3)\,$;
        \item ~\(g : \R^3 \rightarrow \mathcal M_4(\C)\) is an even function of class $\mathrm{C}^2$ and such that
        \begin{equation}\label{eq:hp-8-power-g-decay}
            \an{\,\cdot\,}^{8} \sum_{|k| \le 2} \partial^k_{\xi} g  \in L^\infty(\R^3)\,.
        \end{equation}
    \end{itemize}
Then, for any $\nu \in \R$, the linear operator
    \begin{equation}
    \label{eq:T}
        T\coloneqq \overline w\ast_x \Theta\,,
    \end{equation}
    with~$\Theta$ as in \Cref{prop:TTstar} and \(\overline w(x) = \overline{w(-x)}\),
    is bounded from $L^2_\omega \dot H^{\nu-1}_x$ into $L^2_t \dot H^\nu_x $. Furthermore, the linear operator $T^*$ is bounded from $L^2_t \dot H^\nu_x$ into $L^2 _\omega \dot H^{\nu+1}_x$, for any $\nu\in\R$.
\end{lemma}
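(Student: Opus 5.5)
We argue by the $TT^*$ method: the bound on $T^*$ follows from the bound on $T$ by duality, and the bound on $T$ reduces to an $L^2_t\dot H^{\nu}$ estimate on $TT^*$. Let us check the duality first. The pairing between $\dot H^{\nu-1}_x$ and $\dot H^{1-\nu}_x$ shows that $T\colon L^2_\omega\dot H^{\nu-1}_x\to L^2_t\dot H^\nu_x$ bounded for all $\nu$ is equivalent to $T^*\colon L^2_t\dot H^{-\nu}_x\to L^2_\omega \dot H^{1-\nu}_x$ bounded for all $\nu$. Renaming $-\nu$ as $\nu$, this is the second assertion.

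For the first assertion we want $\|T Z_0\|_{L^2_t\dot H^\nu}\lesssim \|Z_0\|_{L^2_\omega\dot H^{\nu-1}}$. Set $\Lambda=|D|$, a Fourier multiplier commuting with the time-independent convolution by $\overline w$. The estimate is then equivalent to boundedness of $\Lambda^{\nu} T\Lambda^{1-\nu}$ on $L^2_{\omega,x}\to L^2_{t,x}$. By $TT^*$ this is equivalent to boundedness on $L^2_{t,x}$ of
\[
\Lambda^{\nu}T\Lambda^{2-2\nu}T^*\Lambda^{\nu}.
\]
The operator $\Lambda^{2-2\nu}$ sits on the $Z_0$ side, so it acts on $Y$-dependent objects and does not commute trivially. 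It is simpler to compute $T\Lambda^{2\beta}T^*$ directly, in the same way as Assertion~\eqref{ass:ThetaThetastar} of \Cref{prop:TTstar}. Inserting $|\Lambda|^{2\beta}$ between $S(t)$ and $S(-\tau)$ puts a factor $|\xi+\eta|^{2\beta}$ into the kernel (the frequency of $S(t-\tau)[U Y]$ is $\xi+\eta$). This gives
\[
\widehat{T\Lambda^{2\beta}T^*U}(t,\eta)=|\widehat w(\eta)|^2\int_\R \widehat U(\tau,\eta)\,M_\beta(t-\tau,\eta)\,d\tau,
\]
\[
M_\beta(t,\eta)=\int \Tr\big[g(\xi)e^{it\boldsymbol\alpha\cdot\xi}e^{-it\boldsymbol\alpha\cdot(\xi+\eta)}\big]|\xi+\eta|^{2\beta}\,d\xi .
\]
Taking $\beta=1-\nu$ and conjugating by $|\eta|^\nu$, the claim reduces to
\[
|\eta|^{2\nu}|\widehat w(\eta)|^2\,|\eta|^{-2\nu}\ \cdots
\]
or, more precisely, to the time-convolution kernel bound
\[
\sup_\eta\int_\R |\widehat w(\eta)|^2\,|M_{1-\nu}(t,\eta)|\,dt<\infty
\]
(with Young/Plancherel in $\eta$ as in \Cref{prop:L2-continuity-of-L2}). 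This kernel bound cannot hold for all $\nu$ with fixed decay, so I would organise it differently.

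The correct $\nu$-independent reduction uses that $T$ intertwines the multiplier $\Lambda^{\nu}$ only up to weights. Instead, I would note that $TT^*$ with the weight $|\xi+\eta|^2$ corresponds to $\nu=0$ (gain of one derivative). Powers $|\eta|^{\nu}$ appear symmetrically on both sides, since $TT^*$ is a Fourier multiplier in $\eta$ with time convolution. Concretely, the target is
\[
\sup_\eta \big\| |\widehat w(\eta)|^2 M_1(\cdot,\eta)\big\|_{L^1_t}<\infty ,
\]
which handles the $\dot H^{\nu}\to\dot H^{\nu}$ conjugation uniformly in $\nu$ exactly as at the end of \Cref{prop:L2-continuity-of-L2}. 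The shift between the $\nu-1$ and $\nu$ indices is absorbed as follows. One writes $\|Z_0\|_{\dot H^{\nu-1}}$ through the frequency $\xi+\eta$ of the output against the frequency $\xi$ of $Y$. Because $g$ decays, $|\xi+\eta|$ and $|\eta|$ are comparable up to $\an{\xi}$ factors, which are paid for by the weight $\an{\cdot}^8$ on $g$. I expect a careful check here. If needed, I would split $|\xi+\eta|^{2}\le 2|\xi|^2+2|\eta|^2$ and treat the two pieces separately.

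To bound $M_1$, expand $e^{it\boldsymbol\alpha\cdot\xi}e^{-it\boldsymbol\alpha\cdot(\xi+\eta)}$ with \Cref{lemma:exp}(1). Its entries are products of $\cos(t|\xi|),\sin(t|\xi|)$ and $\cos(t|\xi+\eta|),\sin(t|\xi+\eta|)$ against the smooth factors $\boldsymbol\alpha\cdot\widehat\xi$ and $\boldsymbol\alpha\cdot\widehat{(\xi+\eta)}$. Product-to-sum turns them into $\cos(t\Phi_\pm)$ and $\sin(t\Phi_\pm)$ terms. These are handled by \Cref{stat:h-cos-decomposition} and \Cref{stat:h-sin-decomposition}, each giving a $\frac{\sin(|\eta|t)}{|\eta|t}$ (or analogous) main part plus $|\eta|^{-2}\widetilde F$ with $\widetilde F\in L^1_tL^\infty_\eta$.

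\textbf{Main obstacle.} The main parts are not integrable in $t$. For $L_2$ they vanished thanks to the $\xi\wedge\eta$ factor, but $M_1$ has no such factor. I would therefore not use an $L^1_t$ bound for the main part. Instead I would use that its Fourier transform in $t$, namely $\mathcal F_t\big[\frac{\sin(|\eta|t)}{|\eta|t}\big]=c|\eta|^{-1}\mathbbm 1_{[-|\eta|,|\eta|]}$, is bounded by $|\eta|^{-1}$. Similarly, $\frac{\sin x-x\cos x}{x^2}$ has a bounded Fourier transform of size $|\eta|^{-1}$. A time-convolution operator is bounded on $L^2_t$ whenever its kernel has a bounded time-Fourier transform, and this requires only $|\eta|^{-1}\an{\eta}^{-1}|\widehat w|^2$ bounded. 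For the remainders, the singular factor $|\eta|^{-2}$ must be compensated by $|\widehat w(\eta)|^2$ together with the choice of weight. This low-frequency balance, the $\an{\cdot}^4$ decay of $\widehat w$, and the $|\xi|^{-1}$, $|\xi+\eta|^{-1}$ singularities in the $\boldsymbol\alpha\cdot\widehat\xi$ factors are what the extended ranges $\frac1{|\xi||\xi+\eta|}\mathcal S_{6,2}$ in those lemmas are for. Verifying that $h$ lands in these classes with the available $\an{\cdot}^8\partial^{\le2}g$ bound is the step I expect to be delicate.
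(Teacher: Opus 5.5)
Your duality step is correct, and so is your first reduction once the factor $|\eta|^{2\nu}$ is kept. For fixed $\nu$, boundedness of $T\colon L^2_\omega\dot H^{\nu-1}_x\to L^2_t\dot H^\nu_x$ is equivalent to $L^2_{t,x}$-boundedness of the time-convolution multiplier with kernel $|\eta|^{2\nu}|\widehat w(\eta)|^2M_{1-\nu}(t,\eta)$. This is precisely because $|D|$ cannot be pushed through $\Theta$, as you noticed, so different $\nu$ are genuinely different estimates.

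The gap is where you drop this and keep only $|\widehat w|^2M_1$, i.e.\ the case $\nu=0$. You claim the index shift is absorbed because $|\xi+\eta|$ and $|\eta|$ are comparable up to powers of $\an{\xi}$. They are not: for $\xi$ in a compact set, $|\xi+\eta|/|\eta|\to\infty$ as $\eta\to0$ and $\to0$ as $\eta\to-\xi$, and no decay of $g$ pays for this. Conjugating $T\Lambda^2T^*$ by $|\eta|^\nu$ is vacuous, since the two commute.

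Worse, the case $\nu=0$ itself is false whenever $\widehat w(0)\neq0$, which the hypotheses allow. Since $\|e^{it\boldsymbol\alpha\cdot\xi}e^{-it\boldsymbol\alpha\cdot(\xi+\eta)}-\mathbbm 1\|\le|t|\,|\eta|$, one has $M(t,\eta)\approx\|f\|_{L^2}^2$ for $|t|\ll|\eta|^{-1}$, hence $\|M(\cdot,\eta)\|^2_{L^2_t}\gtrsim|\eta|^{-1}$. This is the non-integrable main term of \Cref{stat:h-cos-decomposition}, which here is not killed by any $\xi\wedge\eta$ factor, and your criterion ``$|\eta|^{-1}\an{\eta}^{-1}|\widehat w|^2$ bounded'' fails.

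Concretely, take $\widehat w=\an{\cdot}^{-4}$, $f$ supported in an annulus, a bump $\chi$, and $Z_0=\iint e^{ix\cdot\zeta}f(\xi)\chi\big((\zeta-\xi)/\varepsilon\big)\,d\zeta\,dW(\xi)$. Then $\widehat{\Theta Z_0}(t,\eta)=c\,\chi(\eta/\varepsilon)M(t,\eta)$ and $\|Z_0\|^2_{L^2_\omega\dot H^{\nu-1}_x}\sim\varepsilon^3$, while $\|TZ_0\|^2_{L^2_t\dot H^\nu_x}\gtrsim\varepsilon^{2\nu+2}$ for $\nu>-1$. The resulting ratio $\varepsilon^{\nu-\frac12}$ is unbounded for $\nu<\frac12$. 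Concentrating the input at $|\zeta|<\varepsilon$ instead gives, generically, a ratio of order $\varepsilon^{2-\nu}$. So the bound cannot hold for every $\nu\in\R$, and no reorganisation of your scheme will produce it.

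For comparison, the paper bounds $TT^*$ from $L^2_t\dot H^\mu_x$ to $L^2_t\dot H^{\mu+2}_x$, i.e.\ the kernel $|\eta|^2|\widehat w(\eta)|^2M$. It splits $M$ into two parts:
\begin{itemize}
\item the commutator part, which gives $L_2^\infty$ and is handled by \Cref{coro:boundedL2} thanks to the $\xi\wedge\eta$ factor;
\item the anticommutator part $R$, decomposed into the $G_{j,\pm}$ and treated by \Cref{stat:h-cos-decomposition} and \Cref{stat:h-sin-decomposition}, using the same $\frac{1}{2|\eta|}\chi_{[-|\eta|,|\eta|]}$ Fourier-transform trick you propose for the main terms.
\end{itemize}
The extra factor $|\eta|^2$ is exactly what absorbs the $|\eta|^{-1}$ and $|\eta|^{-2}$ singularities that sink your $\nu=0$ target.

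However, this computation only proves $T\colon L^2_{\omega,x}\to L^2_t\dot H^1_x$, equivalently $T^*\colon L^2_t\dot H^{-1}_x\to L^2_{\omega,x}$. The paper's assertion that this is equivalent to the bound for every $\nu$ tacitly commutes $|D|$ with $\Theta$, which is exactly the step you rightly refused. So your transfer cannot be repaired by following the paper either. The kernel analysis yields the single case $\nu=1$. Any other $\nu$ needs a separate estimate of the weighted kernel $|\eta|^{2\nu}|\widehat w|^2M_{1-\nu}$ from your first reduction, and by the tests above only $\nu\in[\frac12,2]$ can survive in general.
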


\begin{proof}
We immediately remark that
\begin{equation*}
    T^*=\Theta^*\left(w\ast_x\cdot\right)\,,
\end{equation*}
so that, by the $TT^*$ argument, for any $\nu \in \R$, the boundedness of $\mathcal{L}_1^\infty$ from $L^2_t \dot{H}^\nu_x$ into $L^2_\omega \dot{H}^{\nu+1}_x$ and of $T$ from $L^2_\omega \dot H^{\nu-1}_x$ into $L^2_t \dot H^\nu_x $ are equivalent to the continuity of $TT^*$ from $L^2_t\dot{H}^\nu_x$ into $L^2_t\dot{H}^{\nu+2}_x$. According to~\Cref{prop:TTstar} (Assertion~\eqref{ass:ThetaThetastar}), we have
    \begin{align*}
        \widehat{\left(TT^*\right)(V)}(t,\eta) &= |\widehat w (\eta)|^2\int_\R \widehat V(\tau,\eta) M(t-\tau,\eta)\, d\tau\,. 
    \end{align*}
Recalling that
\begin{equation*}
    L^\infty_2\left(V\right) \coloneqq 2\Im\left(\left(\Theta\Theta^*\right)\left(w \ast_x V\right)\right)\left(t,x\right)=2\Re\left(\E \big ( \langle Y(t), -i\int_{\R} S(t-\tau) [w \ast_x V\left(\tau\right) Y\left(\tau\right)] \,d\tau \rangle_{\C^4} \big )\right)\,,
\end{equation*}
we have
\[
\widehat{L^\infty_2(V)} = \int_\R F(t-\tau,\eta) \widehat V(\tau,\eta)\, d\tau\,, 
\]
where $F$ is defined in \eqref{eq:L2-F-kernel}. Thus, the continuity of~\(w\ast L^\infty_2\) from $L^2_t\dot{H}^\nu_x$ into $L^2_t\dot{H}^{\nu+2}_x$ follows from~\Cref{coro:boundedL2} as long as $\langle\,\cdot\,\rangle^8 |\widehat{w}|^2\in L^\infty$. \par

Denoting the decomposition of the operator $TT^*$ as follows,
\begin{equation}
    TT^*(V)= w\ast_x\left(\underbrace{\Re \Big (  \E \big ( \langle Y(t), \int_{\R} S(t-\tau) [w \ast_x V] Y\, d\tau \rangle_{\C^4} \big ) \Big )}_{\eqqcolon R(V)} + \frac i2 L_2^\infty(V)\right)\,,
\end{equation}
it remains to prove the boundedness of $w\ast_x R$ of $\Theta\Theta^*$ from $L^2_t\dot{H}^\nu_x$ into $L^2_t\dot{H}^{\nu+2}_x$.

Remarking that~\(R(V)\) is the real part of~\(\Theta \Theta^*(U)\), with~\(U = w*V\) we get
\[
2\widehat{R(V)}(t,\eta) = \widehat{\Theta \Theta^*(U)}(t,\eta) + \overline{\widehat{\Theta \Theta^*(U)}(t,-\eta)} .
\]
Using~\Cref{prop:TTstar} (Assertion~\eqref{ass:ThetaThetastar}) and the fact that~\(U\) is real-valued, we get
\[
2\widehat{R(V)}(t,\eta) = \int_\R \hat U(\tau, \eta) (M(t-\tau,\eta) + \overline{M(t-\tau,-\eta)})\,d\tau\,,
\]
where we recall
\[
M(t,\eta) = \int_{\R^3} \Tr_{\C^4} \big[ g(\xi) e^{it\boldsymbol{\alpha} \cdot \xi}e^{-it\boldsymbol{\alpha} \cdot (\xi +\eta)}\big] \,d\xi\,.
\]
Now,
\[
M(t,\eta) + \overline{M(t,-\eta)} = \int_{\R^3} \Tr_{\C^4} \big[ g(\xi) e^{it\boldsymbol{\alpha} \cdot \xi}e^{-it\boldsymbol{\alpha} \cdot (\xi +\eta)}\big] \,d\xi+ \int_{\R^3} \Tr_{\C^4} \big[ e^{it\boldsymbol{\alpha}\cdot(\xi -\eta)}e^{-it\boldsymbol{\alpha} \cdot \xi}  g(\xi)^* \big]\,d\xi\,.
\]
Using that~\(g(\xi) = g(\xi)^*\) and applying the change of variables~\(\zeta = \xi-\eta\), we get
\[
M(t,\eta) + \overline{M(t,-\eta)} = \int_{\R^3} \Tr_{\C^4} \big[ g(\xi) e^{it\boldsymbol{\alpha} \cdot \xi}e^{-it\boldsymbol{\alpha} \cdot (\xi +\eta)}\big] \,d\xi+ \int_{\R^3} \Tr_{\C^4} \big[ e^{it\boldsymbol{\alpha}\cdot\zeta}e^{-it\boldsymbol{\alpha} \cdot (\zeta + \eta)}  g(\zeta + \eta)\big]\,d\zeta\,,
\]
which yields, by the cyclicity of the trace,
\[
M(t,\eta) + \overline{M(t,-\eta)} = \int_{\R^3} \Tr_{\C^4} \big[ (g(\xi) + g(\xi+\eta)) e^{it\boldsymbol{\alpha} \cdot \xi}e^{-it\boldsymbol{\alpha} \cdot (\xi +\eta)}\big] \,d\xi\,.
\]

Moreover, recalling that $AB =\frac 12 ( [A,B] + \{ A,B\})$ and that $g(\xi)= f(\xi) f(\xi)^*$ is even, and observing that the function
 \[
    (\xi,\zeta) \mapsto  \Tr \Big (  \Big( g(\xi) - g(\zeta) \Big ) [e^{i(t-\tau) \boldsymbol\alpha \cdot \xi}, e^{ i (t-\tau) \boldsymbol\alpha \cdot \zeta}]\Big)
    \]
    is skew-symmetric (with $g=ff^*$), we have 
\begin{equation}\label{eq:FT-R-aux}
    \widehat{R(V)} (t,\eta) = \frac {i}{4}\widehat w(\eta) \iint \widehat V(\tau ,\eta) \Tr \Big ( \big ( g(\xi)+ g(\xi+\eta) \big ) \{ e^{i(t-\tau) \boldsymbol\alpha \cdot \xi}, e^{-i(t-\tau) \boldsymbol\alpha \cdot (\eta + \xi)}\} \Big )\, d \xi\, d\tau\,.
\end{equation}
By~\Cref{lemma:exp}, the following decomposition holds:
\begin{align*}
    \{e^{i(t-\tau) \boldsymbol\alpha \cdot \xi}, e^{-i(t-\tau) \boldsymbol\alpha \cdot (\eta + \xi)}\}=&2\cos\left(\left(t-\tau\right)|\xi|\right)\cos\left(\left(t-\tau\right)|\xi+\eta|\right)\\
    &-2i\cos\left(\left(t-\tau\right)|\xi|\right)\sin\left(\left(t-\tau\right)|\xi+\eta|\right)\frac{\boldsymbol\alpha\cdot\left(\xi+\eta\right)}{|\xi+\eta|}\\
    &+2i\cos\left(\left(t-\tau\right)|\xi+\eta|\right)\sin\left(\left(t-\tau\right)|\xi|\right)\frac{\boldsymbol\alpha\cdot\xi}{|\xi|}\\
    &+2\sin\left(\left(t-\tau\right)|\xi|\right)\sin\left(\left(t-\tau\right)|\xi+\eta|\right)\frac{|\xi|^2+\xi\cdot\eta}{|\xi||\xi+\eta|}\eqqcolon \iota_1+\iota_2+\iota_3+\iota_4\,,
\end{align*}
which allows us to rewrite \eqref{eq:FT-R-aux} as
\begin{equation*}
    \widehat{\bar w \ast R(V)} (t,\eta) =\int_\mathbb{R} G\left(t-\tau,\eta\right)\widehat{V}\left(\tau,\eta\right)\,d\tau\,,
\end{equation*}
where
\begin{equation*}
    G\left(t,\eta\right)=\frac{i}{4}|\widehat{w}\left(\eta\right)|^2\int \Tr\left(g^+_\eta\left(\xi\right)\left(\sum_{j=1}^4\iota_j\left(t,\xi,\eta\right)\right)\right)d\xi\eqqcolon \sum_{j=1}^4G_j\left(t,\eta\right)\,,
\end{equation*}
with $g^+_\eta\left(\xi\right)=f\left(\xi\right)f^*\left(\xi\right)+f\left(\xi+\eta\right)f^*\left(\xi+\eta\right)$. Now, set
\begin{align}\label{eq:G-1-pm}
    G_{1,\pm}\left(t,\eta\right)&=\frac{i}{4}|\widehat{w}\left(\eta\right)|^2\int \Tr\left(g^+_\eta\left(\xi\right)\right)\cos\left(t\left(|\xi|\pm|\xi+\eta|\right)\right)\,d\xi\,,\\ \label{eq:G-2-pm}
    G_{2,\pm}\left(t,\eta\right)&= \frac{1}{4}|\widehat{w}\left(\eta\right)|^2\int \Tr\left(g^+_\eta\left(\xi\right)\frac{\boldsymbol\alpha\cdot\left(\xi+\eta\right)}{|\xi+\eta|}\right)\sin\left( t\left(|\xi|\pm|\xi+\eta|\right)\right)\,d\xi\,,\\ \label{eq:G-3-pm}
    G_{3,\pm}\left(t,\eta\right)&=\mp\frac{1}{4}|\widehat{w}\left(\eta\right)|^2\int \Tr\left(g^+_\eta\left(\xi\right)\frac{\boldsymbol\alpha\cdot\xi}{|\xi|}\right)\sin\left(t\left(|\xi|\pm|\xi+\eta|\right)\right)\,d\xi\,,\\ \label{eq:G-4-pm}
    G_{4,\pm}\left(t,\eta\right)&=\mp\frac{i}{4}|\widehat{w}\left(\eta\right)|^2\int \Tr\left(g^+_\eta\left(\xi\right)\right)\frac{|\xi|^2+\xi\cdot\eta}{|\xi||\xi+\eta|} \cos\left(t \left(|\xi|\pm|\xi+\eta|\right)\right)\,d\xi\,,
\end{align}
so that, by means of standard trigonometric identities,
\begin{equation*}
    G_j\left(t,\eta\right)=G_{j,+}\left(t,\eta\right)+G_{j,-}\left(t,\eta\right),\quad j=1,2,3,4\,.
\end{equation*}
Therefore, the continuity of $w\ast_xR$ from $L^2_t\dot{H}^\nu$ into $L^2_t\dot{H}^{\nu+2}_x$ reduces to the study of the same continuity for $\Gamma_{j,\sigma}$, defined by
\begin{equation*}
    \widehat{\Gamma_{j,\sigma}\left(V\right)}\left(t,\eta\right)=\int_\mathbb{R} G_{j,\sigma}\left(t-\tau,\eta\right)\widehat{V}\left(\tau,\eta\right)\,d\tau\,,\quad j=1,2,3,4\,,\quad \sigma\in\{+,-\}\,.
\end{equation*}
Now,
\begin{equation}\label{eq:aux0}
    \|\Gamma_{j,\sigma}\left(V\right)\|_{L^2_t\dot{H}^{\nu+2}_\eta}=\|\,|\eta|^{\nu+2}\,\widehat{\Gamma_{j,\sigma}\left(V\right)}\|_{L^2_tL^2_\eta}=\|\left(|\eta|^2\, G_{j,\sigma}\right)\ast_t\left(|\eta|^\nu\,\widehat{V}\right)\|_{L^2_tL^2_\eta}\,.
\end{equation}
For $j=1,4$ and $\sigma\in\{+,-\}$, one applies Lemma~\ref{stat:h-cos-decomposition} and gets
\begin{align}\label{eq:aux1}
    \|\Gamma_{j,\sigma}\left(V\right)\|_{L^2_t\dot{H}^{\nu+2}_\eta}&\leq \|\left(|\eta|^2\,\frac{\sin\left(|\eta|t\right)}{|\eta|t}\overline{G}_{j,\sigma}\left(h\right)\left(\eta\right)\right)\ast_t\left(|\eta|^\nu\,\widehat{V}\right)\|_{L^2_tL^2_\eta}\\ \label{eq:aux2}
    &\quad+\|\left(\widetilde{G}_{j,\sigma}\left(h\right)\left(t,\eta\right)\right)\ast_t\left(|\eta|^\nu\,\widehat{V}\right)\|_{L^2_tL^2_\eta}\,.
\end{align}
By Young's convolution inequality, the norm in \eqref{eq:aux2} can be easily estimated as follows:
\begin{align*}
    \|\left(\widetilde{G}_{j,\sigma}\left(h\right)\left(t,\eta\right)\right)\ast_t\left(|\eta|^\nu\,\widehat{V}\right)\|_{L^2_tL^2_\eta}&\leq \|\widetilde{G}_{j,\sigma}\left(h\right)\left(t,\eta\right)\|_{L^1_tL^\infty_\eta}\|\,|\eta|^\nu\,\widehat{V}\|_{L^2_tL^2_\eta}\\
    &=\|\widetilde{G}_{j,\sigma}\left(h\right)\left(t,\eta\right)\|_{L^1_tL^\infty_\eta}\|V\|_{L^2_t\dot{H}^\nu_x}\,.
\end{align*}
Focusing on $j=1$ and $\sigma\in\{+,-\}$, the continuity of $\widetilde{G}_{j,\sigma}$ stated in~\Cref{stat:h-cos-decomposition} implies
\begin{equation*}
    \|\left(\widetilde{G}_{j,\sigma}\left(h\right)\left(t,\eta\right)\right)\ast_t\left(|\eta|^\nu\,\widehat{V}\right)\|_{L^2_tL^2_\eta}\leq \||\widehat{w}\left(\eta\right)|^2h\|_{8,2}\|V\|_{L^2_t\dot{H}^\nu_x}\,,
\end{equation*}
where $h\left(\xi,\eta\right)=\Tr\left(g^+_\eta\left(\xi\right)\right)$ by \eqref{eq:G-1-pm}, and the $8,2$--norm is bounded as long as $\langle\,\cdot\,\rangle^8 |\widehat{w}|^2\in L^\infty$ and
\begin{equation}\label{eq:8-power-decay-f}
    \langle\,\cdot\,\rangle^8\sum_{|k| \le 2} \partial^k (ff^*)  \in L^\infty(\R^3)\,,
\end{equation}
which is equivalent to \eqref{eq:hp-8-power-g-decay}. Instead, for $j=4$ and $\sigma\in\{+,-\}$, once again thanks to~\Cref{stat:h-cos-decomposition}, one has
\begin{equation*}
    \|\left(\widetilde{G}_{j,\sigma}\left(h\right)\left(t,\eta\right)\right)\ast_t\left(|\eta|^\nu\,\widehat{V}\right)\|_{L^2_tL^2_\eta}\leq \||\widehat{w}\left(\eta\right)|^2h\,|\xi|\,|\xi+\eta|\,\|_{6,2}\|V\|_{L^2_t\dot{H}^\nu_x}\,,
\end{equation*}
where $h\left(\xi,\eta\right)=\Tr\left(g^+_\eta\left(\xi\right)\right)\frac{|\xi|^2+\xi\cdot\eta}{|\xi||\xi+\eta|}$ by \eqref{eq:G-4-pm}, and the $6,2$--norm is bounded as long as $\langle\,\cdot\,\rangle^7 |\widehat{w}|^2\in L^\infty$ and $f$ satisfies \eqref{eq:8-power-decay-f}. Going back to \eqref{eq:aux1}, by noticing that
\begin{equation*}
    \frac{\sin\left(|\eta|t\right)}{|\eta|t}=\mathcal{F}_\omega\left(\frac{1}{2|\eta|}\raisebox{0.5ex}{$\chi$}_{[-|\eta|,|\eta|]}\left(\omega\right)\right)\left(t\right)\,,
\end{equation*}
the following estimate for the $L^2_tL^2_\eta$--norm holds
\begin{align*}
    &\|\left(|\eta|^2\,\frac{\sin\left(|\eta|t\right)}{|\eta|t}\overline{G}_{j,\sigma}\left(h\right)\left(\eta\right)\right)\ast_t\left(|\eta|^\nu\,\widehat{V}\right)\|_{L^2_tL^2_\eta}\\
    &\hspace{3cm}=\|\,|\eta|^2\,\mathcal{F}_t\left(\frac{\sin\left(|\eta|t\right)}{|\eta|t}\right)\left(\omega\right)\overline{G}_{j,\sigma}\left(h\right)\left(\eta\right)\,|\eta|^\nu\,\mathcal{F}_t\left(\widehat{V}\right)\left(\omega\right)\|_{L^2_\omega L^2_\eta}\\
    &\hspace{3cm}\leq\|\,|\eta|^{\nu+1}\,\raisebox{0.5ex}{$\chi$}_{[-|\eta|,|\eta|]}\left(\omega\right)\overline{G}_{j,\sigma}\left(h\right)\left(\eta\right)\widehat{V}\left(\omega,\eta\right)\|_{L^2_\omega L^2_\eta}\\
    &\hspace{3cm}\lesssim \|\overline{G}_{j,\sigma}\left(\,|\eta|\,h\right)\left(\eta\right)\|_{L^\infty_\eta}\|\,|\eta|^\nu\,\widehat{V}\left(\omega,\eta\right)\|_{L^2_\omega L^2_\eta}\,.
\end{align*}
Focusing on $j=1$ and $\sigma\in\{+,-\}$, the continuity of $\overline{G}_{j,\sigma}$ stated in~\Cref{stat:h-cos-decomposition} yields
\begin{equation*}
    \|\left(|\eta|^2\,\frac{\sin\left(|\eta|t\right)}{|\eta|t}\overline{G}_{j,\sigma}\left(h\right)\left(\eta\right)\right)\ast_t\left(|\eta|^\nu\,\widehat{V}\right)\|_{L^2_tL^2_\eta}\lesssim \|\,|\widehat{w}\left(\eta\right)|^2h\|_{8,2}\|V\|_{L^2_t \dot{H}^\nu_x}\,,
\end{equation*}
where $h\left(\xi,\eta\right)=\Tr\left(g^+_\eta\left(\xi\right)\right)$ by \eqref{eq:G-1-pm}, and the $8,2$--norm is bounded as long as $\langle\,\cdot\,\rangle^4 |\widehat{w}|^2\in L^\infty$ and $f$ satisfies \eqref{eq:8-power-decay-f}. Instead, for $j=4$ and $\sigma\in\{+,-\}$, once again thanks to~\Cref{stat:h-cos-decomposition}, one has
\begin{equation*}
    \|\left(|\eta|^2\,\frac{\sin\left(|\eta|t\right)}{|\eta|t}\overline{G}_{j,\sigma}\left(h\right)\left(\eta\right)\right)\ast_t\left(|\eta|^\nu\,\widehat{V}\right)\|_{L^2_tL^2_\eta}\lesssim \|\,|\widehat{w}\left(\eta\right)|^2h\,|\xi|\,|\xi+\eta|\,\|_{6,2}\|V\|_{L^2_t\dot{H}^\nu_x}\,,
\end{equation*}
where $h\left(\xi,\eta\right)=\Tr\left(g^+_\eta\left(\xi\right)\right)\frac{|\xi|^2+\xi\cdot\eta}{|\xi||\xi+\eta|}$ by \eqref{eq:G-4-pm}, and the $6,2$--norm is bounded as long as $\langle\,\cdot\,\rangle^{8} |\widehat{w}|^2\in L^\infty$ and $f$ satisfies \eqref{eq:8-power-decay-f}.\par
Going back to \eqref{eq:aux0}, for $j=2,3$ and $\sigma\in\{+,-\}$, one applies~\Cref{stat:h-sin-decomposition} and gets
\begin{align}\label{eq:aux3}
    \|\Gamma_{j,\sigma}\left(V\right)\|_{L^2_t\dot{H}^{\nu+2}_\eta}&\leq \|\left(|\eta|^2\,\left(\frac{\sin(|\eta|t) - t |\eta| \cos(|\eta|t)}{|\eta|t^2} \right)\overline{G}_{j,\sigma}\left(h\right)\left(\eta\right)\right)\ast_t\left(|\eta|^\nu\,\widehat{V}\right)\|_{L^2_tL^2_\eta}\\ \label{eq:aux4}
    &\quad+\|\left(\widetilde{G}_{j,\sigma}\left(h\right)\left(t,\eta\right)\right)\ast\left(|\eta|^\nu\,\widehat{V}\right)\|_{L^2_tL^2_\eta}\,.
\end{align}
The norm in \eqref{eq:aux4} can be estimated similarly to the one in \eqref{eq:aux2}, recalling that $h\left(\xi,\eta\right)=\Tr\left(g^+_\eta\left(\xi\right)\frac{\boldsymbol\alpha\cdot\left(\xi+\eta\right)}{|\xi+\eta|}\right)$ for $j=2$ and $h\left(\xi,\eta\right)=\Tr\left(g^+_\eta\left(\xi\right)\frac{\boldsymbol\alpha\cdot\xi}{|\xi|}\right)$ for $j=3$. In both cases, the $7,2$--norm is bounded as long as $\langle\,\cdot\,\rangle^8 |\widehat{w}|^2\in L^\infty$ and $f$ satisfies \eqref{eq:8-power-decay-f}. 
The norm in \eqref{eq:aux3} can be estimated similarly to the one in \eqref{eq:aux1} by noticing that
\begin{equation*}
    \frac{\sin(|\eta|t) - t |\eta| \cos(|\eta|t)}{|\eta|t^2}=\mathcal{F}_\omega\left(\frac{i\omega}{2|\eta|}\raisebox{0.5ex}{$\chi$}_{[-|\eta|,|\eta|]}\left(\omega\right)\right)\left(t\right)\,,
\end{equation*}
and recalling that $h\left(\xi,\eta\right)=\Tr\left(g^+_\eta\left(\xi\right)\frac{\boldsymbol\alpha\cdot\left(\xi+\eta\right)}{|\xi+\eta|}\right)$ for $j=2$ and $h\left(\xi,\eta\right)=\Tr\left(g^+_\eta\left(\xi\right)\frac{\boldsymbol\alpha\cdot\xi}{|\xi|}\right)$ for $j=3$. In both cases, the $7,2$--norm is bounded as long as $\langle\,\cdot\,\rangle^{8} |\widehat{w}|^2\in L^\infty$ and $f$ satisfies \eqref{eq:8-power-decay-f}. 
\end{proof}

For any~\(k\in \Z\), let~\(W_k\) be the Fourier multiplier by~\(\xi\mapsto\an{\xi}^k\).

\begin{corollary}\label{cor:estimateswithSobolev} Assume that~\(g, p,q\) and \(s\) satisfy the hypothesis of~\Cref{prop:est_L1}. 

\begin{enumerate}
\item For any~\(\nu \in \R\), the operator~\(\Theta^* W_{-4}\) is bounded from~\(L^2_t\dot H^\nu_x\) into~\(L^2_\omega\dot H^{\nu +1}_x\) and the operator~\(W_{-4}\Theta\) is bounded from~\(L^2_\omega\dot H^{\nu -1}_x\) into~\(L^2_t \dot H^\nu_x\) ;
\item if~\(w\) is such that~\(\hat w\in L^\infty\), we have that~\(L_1 W_{-4}\) is bounded from~\(L^2_t \dot H^{s-1}_x\) into~\(L^2_\omega L^p_t L^q_x\);
\item if~\(w\) is such that~\(\hat w\in L^\infty\), we have that~\(L_1\) is bounded from~\(L^2_t (\dot H_x^{s-1} \cap H_x^4)\) into~\(L^2_\omega L^p_t L^q_x\);
\item the operator~\(L_3\) is bounded from~\(L^2_\omega \dot H_x^{\nu - 1}\) into~\(L^2_t \dot H^\nu_{-4,x}\), where we recall that~\(\dot H^\nu_{-4,x}\) stands for~\((1-\Delta)^2\dot H_x^\nu\) (see~\Cref{sec:notation}).
\end{enumerate}
\end{corollary}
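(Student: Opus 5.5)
The idea is that $W_{-4}$ plays the role that $w\ast$ played in \Cref{lem:TT*} and \Cref{prop:est_L1}. Convolution by $w$ is the Fourier multiplier $\widehat w(\eta)$ (up to the normalising constant), and the only properties of $\widehat w$ used in \Cref{lem:TT*} were that it is bounded and that $\an{\eta}^8|\widehat w(\eta)|^2\in L^\infty$. We therefore apply \Cref{lem:TT*} with $w$ replaced by the kernel $w_4$ defined by $\widehat{w_4}(\eta)=\an{\eta}^{-4}$ (suitably normalised). This choice is real, even, and satisfies $\an{\cdot}^4\widehat{w_4}\in L^\infty$, so in particular $\an{\cdot}^8|\widehat{w_4}|^2\in L^\infty$. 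Since $\widehat{w_4}$ is real and even, we have $\overline{w_4}=w_4$, and $T=w_4\ast\Theta=W_{-4}\Theta$. \Cref{lem:TT*} then gives directly that $W_{-4}\Theta$ is bounded from $L^2_\omega\dot H^{\nu-1}_x$ into $L^2_t\dot H^\nu_x$. It also gives that $T^*=\Theta^*W_{-4}$ is bounded from $L^2_t\dot H^\nu_x$ into $L^2_\omega\dot H^{\nu+1}_x$. This proves Item~(1). The one point to check is that the proof of \Cref{lem:TT*} really only used the multiplier bounds and the hypotheses on $g$, in particular the reality of the convolution kernel used in the computation of $R(V)$; this holds because $\widehat{w_4}$ is real and even.

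For Item~(2), we follow the proof of \Cref{prop:est_L1}. Write
\[
L_1W_{-4}(V)=-i\int_0^tS(t-\tau)\big[(w\ast W_{-4}V)(\tau)\,Y(\tau)\big]\,d\tau .
\]
By the Christ--Kiselev reduction (\Cref{prop:TTstar}, Assertion~(3)), it suffices to bound
\[
S(t)\,\Theta^*\big(w\ast W_{-4}V\big)=S(t)\,\Theta^*W_{-4}(w\ast V).
\]
Since $\widehat w\in L^\infty$, convolution by $w$ is bounded on $L^2_t\dot H^{s-1}_x$. Item~(1) then places $\Theta^*W_{-4}(w\ast V)$ in $L^2_\omega\dot H^s_x$. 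The Strichartz estimate of Lemma~\ref{lemma:Str_est} for $S(t)$ maps this space into $L^2_\omega L^p_tL^q_x$.

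Item~(3) follows from Item~(2) by writing $L_1=(L_1W_{-4})\,W_4$. We need $W_4$ to map $\dot H^{s-1}_x\cap H^4_x$ into $\dot H^{s-1}_x$. Because $s-1<0$, we split frequencies. For $|\eta|\le1$, $\an{\eta}^4\lesssim1$, so the low-frequency part is controlled by the $\dot H^{s-1}_x$ norm. For $|\eta|\ge1$, $|\eta|^{s-1}\an{\eta}^4\lesssim\an{\eta}^4$, so the high-frequency part is controlled by the $H^4_x$ norm. Hence
\[
\|W_4V\|_{\dot H^{s-1}_x}\lesssim\|V\|_{\dot H^{s-1}_x}+\|V\|_{H^4_x}.
\]

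For Item~(4), we use $L_3=2\Re(-i\Theta)$ from \Cref{prop:TTstar}, Assertion~(1). The space $\dot H^\nu_{-4,x}$ carries the norm $\|W_{-4}u\|_{\dot H^\nu_x}$. Since $W_{-4}$ is a real even multiplier, it commutes with taking real parts. Therefore
\[
\|L_3Z_0\|_{L^2_t\dot H^\nu_{-4,x}}\le2\|W_{-4}\Theta Z_0\|_{L^2_t\dot H^\nu_x}\lesssim\|Z_0\|_{L^2_\omega\dot H^{\nu-1}_x},
\]
where the last inequality is Item~(1).

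The main obstacle is Item~(1): confirming that the $TT^*$ decomposition of \Cref{lem:TT*} goes through verbatim with $\widehat w$ replaced by $\an{\eta}^{-4}$, and that the decay budget $\an{\eta}^8|\widehat{w_4}|^2\in L^\infty$ suffices for the $G_{j,\pm}$ terms. It does, since the lemma required exactly this condition. Everything else is routine bookkeeping.
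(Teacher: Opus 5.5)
Your proposal is correct and follows essentially the same route as the paper: \Cref{lem:TT*} applied with $\widehat w=\an{\cdot}^{-4}$ for (1), the Christ--Kiselev reduction plus Strichartz estimates for (2), the factorization $L_1=(L_1W_{-4})W_4$ for (3), and $L_3=2\Re(-i\Theta)$ combined with (1) for (4). Your explicit low/high-frequency split in (3) and your tracking of the factor $-i$ and of the reality and evenness of $w_4$ are slightly more careful than the paper's write-up.
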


\begin{proof}
The operator~\(\Theta^* W^{-4}\) coincides with the operator~\(T^*\) defined in \Cref{lem:TT*}, with~\(w\) given by the inverse Fourier transform of~\(\an{\xi}^{-4}\). Therefore,~\(w\) satisfies the hypothesis of Lemma~\ref{lem:TT*} which  implies (1).

By the Christ-Kiselev lemma, it suffices to prove~(2) for~\(L_1^\infty W_{-4} = S(t)\Theta^* W_{-4} (w\ast_x\cdot)\). By standard Strichartz estimates, the operator~\(S(t)\Theta^* W_{-4}\) is bounded from~\(L^2_t \dot H^{s-1}_x\) into~\(L^2_\omega L^p_t L^q_x\). Since~\(\hat w\) is bounded, the operator~\((w\ast_x\cdot)\) is bounded from~\(L^2_t \dot H^{s-1}_x\) into itself.

We have~\(L_1 = L_1 W_{-4} W_4\) so that the previous continuity estimate implies
\[
\|L_1(V)\|_{L^2_\omega L^p_t L^q_x} \lesssim \|W_4 V\|_{L^2_t \dot H^{s-1}_x}\,.
\]
Since~\(s\leq 1\),
\[
\|W_4 V\|_{L^2_t \dot H^{s-1}_x} \lesssim \|V\|_{L^2_t (\dot H_x^{s-1} \cap H_x^4)}\,,
\]
which yields (3).

We have that~\(L_3 = 2\Re \Theta\) and~\(\Theta = W_4 W_{-4} \Theta\). Since~\(W_4 = (1-\Delta)^2\), we easily get (4).
\end{proof}

\subsection{Strichartz Estimates}

Under the hypothesis of~\Cref{th:main}, we have that~\(1-L_2\) is invertible. Therefore, we have
\[
S_Z(Z_0) (t) =S(t) Z_0 + L_1(1-L_2)^{-1}L_3(Z_0)\quad\text{and}\quad \rho[S_Q(Q_0)(t)] = (1-L_2)^{-1} \rho[S(t) Q_0 S(-t)]. 
\]
We recall the standard (and dual) Strichartz estimates for spinors and for orthonormal systems.

\begin{lemma}[Standard Strichartz estimates \cite{DAnFan-2008-CPDE, GinVel-1995-JFA, Hos-2024-JFA}]\label{lemma:Str_est}
    Let $S(t)$ be as in \eqref{eq:mass-Dirac-propagator} and let $p,q \in [2, +\infty]$ and $s \in \R$ such that 
    \begin{equation*}
        \frac 1p + \frac 1q = \frac12\,, \quad q < + \infty\,, \quad s= \frac 32 - \frac 3q - \frac 1p\,.
    \end{equation*}
    Then, for any $u\in L^2_\omega \dot H^s_x$, there exist a constant $C=C(p,q,s) >0$ such that
    \begin{equation*}
        \| S(t) u \|_{L^2_\omega L^p_t L^q_x} \le C \|u \|_{L^2_\omega \dot H^s_x}\,,
    \end{equation*}
    and, for any $F \in L^2_\omega L^{p'}_t \dot H^{2s}_{q',x}$, a constant $C'= C'(p,q,s)>0$ such that
    \begin{equation*}
        \| \int_0^t S(t-s) F(s) ds \|_{L^2_\omega L^p_t L^q_x} \le C' \| F \|_{L^2_\omega L^{p'}_t \dot H^{2s}_{q',x}}\,,
    \end{equation*}
    where $p'$ and $ q'$ are the conjugate exponents of, respectively, $p$ and $q$.

    Furthermore, there exists a constant~\(C=C(p,q,s)>0\) such that, for any~\(Q_0\) satisfying $\an{\nabla}^{s} Q_0 \an{\nabla}^s\in\mathfrak S^\beta$,
    \[
    \| \rho[S(t) Q_0 S(-t)] \|_{L^{p/2}_t L^{q/2}_x} \leq C \|\an{\nabla}^{s} Q_0 \an{\nabla}^s\|_{\mathfrak S^\beta}\,,
    \]
    where, once again, $\rho[Q]=\rho_Q$. Here, the exponent $\beta$ is given by
    \begin{equation*}
        \beta= \begin{cases}
            \frac{2q}{q+2}\,,&\text{if }q<6\,,\\
            \text{any }\beta\in [1,\frac{p}{2}]\,,\quad&\text{otherwise}\,.
        \end{cases}
    \end{equation*}
\end{lemma}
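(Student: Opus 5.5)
The statement is Lemma~\ref{lemma:Str_est}, a collection of known Strichartz estimates for the massless Dirac propagator, so the plan is to reduce it to the cited results \cite{DAnFan-2008-CPDE, GinVel-1995-JFA, Hos-2024-JFA}. There are three steps: the homogeneous estimate, the inhomogeneous estimate, and the orthonormal-system estimate.

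\emph{Step 1: homogeneous estimate.} First, $S(t)=e^{-it\widetilde m}e^{-it\D}$, and the phase $e^{-it\widetilde m}$ has modulus one, so it plays no role. Next, $\D^2=-\Delta$ gives
\[
e^{-it\D}=\cos(t|\nabla|)-i\,\frac{\D}{|\nabla|}\,\sin(t|\nabla|).
\]
The operator $\D/|\nabla|$ is a matrix of Riesz transforms. Hence $e^{-it\D}$ is a combination of the half-wave propagators $e^{\pm it|\nabla|}$, composed with operators bounded on every $\dot H^s_x$. The wave Strichartz estimates of Ginibre--Velo then give, for fixed $\omega$,
\[
\|S(t)u\|_{L^p_tL^q_x}\lesssim\|u\|_{\dot H^s_x}.
\]
These hold on the sharp-admissible line $\frac1p+\frac1q=\frac12$ with $p>2$, $q<\infty$, and $s=\frac32-\frac3q-\frac1p$ by scaling. (At $p=2$, $q=\infty$, the endpoint is excluded.) Squaring and integrating in $\omega$ gives the $L^2_\omega$ version, because the constant does not depend on $\omega$.

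\emph{Step 2: inhomogeneous estimate.} This follows from Step 1 by the usual $TT^*$ argument. The dual estimate is
\[
\Big\|\int_{\R} S(-s)F(s)\,ds\Big\|_{\dot H^{-s}_x}\lesssim\|F\|_{L^{p'}_tL^{q'}_x}.
\]
Composing it with the homogeneous estimate bounds the untruncated operator $\int_\R S(t-s)F(s)\,ds$ in $L^p_tL^q_x$ by $\|\,|\nabla|^{2s}F\|_{L^{p'}_tL^{q'}_x}$; this is where the $2s$ derivatives arise. Since $p>2>p'$, the Christ--Kiselev lemma then passes to the truncation $\int_0^t$. Again, one integrates in $\omega$ at the end.

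\emph{Step 3: orthonormal-system estimate.} Write $Q_0=\an{\nabla}^{-s}R\,\an{\nabla}^{-s}$ with $R\in\mathfrak S^\beta$. The density $\rho[S(t)Q_0S(-t)]$ is then a sum $\sum_j\lambda_j|S(t)\an{\nabla}^{-s}\phi_j|^2$ over an orthonormal family $(\phi_j)$. Splitting $e^{-it\D}$ into the projections onto positive and negative energies reduces matters to Frank--Sabin type estimates for the half-wave flow with frequency-weighted data, and these are exactly the content of \cite{Hos-2024-JFA}. The value of $\beta$, namely $\beta=\frac{2q}{q+2}$ for $q<6$ and any $\beta\in[1,\frac p2]$ otherwise, is read off from that reference. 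For $q\geq 6$, any $\beta\le p/2$ is obtained by combining the case $\beta=1$ (the triangle inequality plus Step 1) with interpolation.

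The main obstacle is to check that the hypotheses and derivative counts of the cited orthonormal results match the inhomogeneous weight $\an{\nabla}^s$ and the exponent ranges stated here. If they do not match directly, I would use the inclusion between the homogeneous and inhomogeneous weights, which is valid because $s\ge0$ on this range.
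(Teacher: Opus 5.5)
Your Steps 1 and 2 are correct. The paper itself gives no proof, only the citations, and your arguments are the standard ones behind them. You write $e^{-it\D}=e^{-it|\nabla|}P_++e^{it|\nabla|}P_-$ with $P_\pm=\frac12(1\pm\D/|\nabla|)$ and apply Ginibre--Velo; then $TT^*$ plus Christ--Kiselev, using $p>2$ (automatic from $q<\infty$), gives the inhomogeneous bound with $2s$ derivatives. In Step 3 your deferral to the cited reference is the same as the paper's, with one detail to make explicit. After splitting into $P_\pm$, the functions $P_\pm\an{\nabla}^{-s}\phi_j$ are no longer orthonormal, so work with operators:
\begin{itemize}
\item split $R=R_+-R_-$ into its positive and negative parts;
\item use $|S(t)g|^2\le 2|e^{-it|\nabla|}P_+g|^2+2|e^{it|\nabla|}P_-g|^2$;
\item rediagonalize $P_\pm R_\pm P_\pm$, whose $\mathfrak S^\beta$ norm is at most $\|R\|_{\mathfrak S^\beta}$.
\end{itemize}

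The genuine problem is your claim that for $q\ge 6$ every $\beta\le p/2$ follows by interpolating with the trivial case $\beta=1$. Work in the plane of $(1/p,1/\beta)$ on the line $\frac1p+\frac1q=\frac12$. The estimates available to interpolate are:
\begin{itemize}
\item the $q<6$ ones, where $1/\beta=1-1/p$ with $1/p<1/3$;
\item the triangle-inequality ones with $\beta=1$ and $1/p<1/2$. At $(p,q)=(2,\infty)$ even the single-function wave estimate fails, so that point is unavailable.
\end{itemize}
Every one of these points satisfies $1/\beta>2/p$ strictly, and convex combinations preserve this strict inequality. The endpoint $\beta=p/2$ is exactly $1/\beta=2/p$: the segment from $(1/3,2/3)$ to $(1/2,1)$, whose two ends are both unavailable. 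So your interpolation gives all $\beta\in[1,p/2)$ for $q\ge 6$, including only $\beta<3/2$ at $q=6$, but never $\beta=p/2$. That endpoint would have to come directly from the cited reference, and it is delicate: in the analogous Schr\"odinger regime, Frank--Sabin only obtain $\beta<p/2$. This does not affect the case the paper actually uses, $p=q=4$, $s=1/2$, $\beta=4/3$, which lies in the range $q<6$.
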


 The proof of~\Cref{th:main} is a direct consequence of the boundedness of the operators $L_1,L_2, L_3$ and  $S(t)$ proved in~\Cref{prop:L2-continuity-of-L2} and in Lemmas~\ref{lem:boundedness}~and~\ref{lemma:Str_est}.

We recall that, for any~\(k\in \Z\), the operator~\(W_k\) is the Fourier multiplier by~\(\xi\mapsto\an{\xi}^k\).

\begin{lemma}
\label{lem:boundedness} Let~\(p,q,s,f,w\) be such that the hypothesis of~\Cref{th:main} are fulfilled. The operators $L_1, L_2$ and $L_3$ are well-defined and continuous according to the following mappings:
    \begin{align}
        &\label{eq:L1-strichartz-continuity} L_1 W_4 : L^2_t\dot H^{s-1}_x \rightarrow L^2_\omega L^p_tL^q_x \,, \\
        &\label{eq:L3-strichartz-continuity} L_3 : L^2_{\omega} \dot H^s_x \rightarrow L^p_tL^q_x\,, \\
        &\label{eq:L3-continuity} W_{-4} L_3 : L^2_{\omega}\dot H^{\nu-1}_x \rightarrow L^2_{t}\dot H^{\nu}_x\,, \quad \forall \nu \in \R\,.
    \end{align}
\end{lemma}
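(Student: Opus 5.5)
We treat the three mappings one at a time, reducing each to results already established: \Cref{cor:estimateswithSobolev}, \Cref{lem:TT*}, \Cref{prop:TTstar} and the Strichartz estimates of Lemma~\ref{lemma:Str_est}. Under the hypotheses of \Cref{th:main}, the matrix field $g=ff^*$ is even, and the bound on $\sum_{k\le 3}\|\an{\,\cdot\,}^8\nabla^{\otimes k}f\|_{L^\infty}$ controls $\an{\,\cdot\,}^{8}\sum_{|k|\le 2}\partial^k g$ in $L^\infty$ (indeed even $\an{\,\cdot\,}^{16}$ by the Leibniz rule). Moreover, $\an{\,\cdot\,}^8\widehat w\in L^\infty$ implies $\widehat w\in L^\infty$ and $\an{\,\cdot\,}^4\widehat w\in L^\infty$. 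So all earlier lemmas apply.

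\emph{Mapping \eqref{eq:L1-strichartz-continuity}.} Write $L_1W_4(V)=-i\int_0^tS(t-\tau)[(w*W_4V)Y]\,d\tau$. By the Christ--Kiselev reduction of \Cref{prop:TTstar}(3) (recall $p>2$), it suffices to treat $L_1^\infty W_4=S(t)\Theta^*(w*W_4\,\cdot)$. We introduce $w_4$ with $\widehat{w_4}(\eta)=\an{\eta}^4\widehat w(\eta)$. Since $\an{\,\cdot\,}^8\widehat w\in L^\infty$, we get $\an{\,\cdot\,}^4\widehat{w_4}\in L^\infty$, so \Cref{lem:TT*} applies with $w$ replaced by $w_4$. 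It gives that $\Theta^*(w_4*\cdot)$ maps $L^2_t\dot H^{s-1}_x$ to $L^2_\omega\dot H^s_x$. The Strichartz estimate of Lemma~\ref{lemma:Str_est} then sends $S(t)$ applied to this into $L^2_\omega L^p_tL^q_x$, exactly as in \Cref{prop:est_L1}. An alternative route is \Cref{cor:estimateswithSobolev}(2) with $w$ replaced by $w_4$, using $\widehat{w_4}\in L^\infty$.

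\emph{Mapping \eqref{eq:L3-continuity}.} This is \Cref{cor:estimateswithSobolev}(1)/(4). By \Cref{prop:TTstar}(1), $L_3=2\Re(-i\Theta)$, and $W_{-4}$ commutes with taking real parts because its symbol is real and even. Hence it suffices to show that $W_{-4}\Theta$ maps $L^2_\omega\dot H^{\nu-1}_x$ into $L^2_t\dot H^\nu_x$, which is item (1) of that corollary. That item follows from \Cref{lem:TT*} with the potential whose Fourier transform is $\an{\eta}^{-4}$.

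\emph{Mapping \eqref{eq:L3-strichartz-continuity}.} This is the main obstacle, since it is not a pure $L^2_t$ statement. Here $L_3(Z_0)(t)=2\Re\E\langle Y(t),S(t)Z_0\rangle$, and I would use Cauchy--Schwarz in $\omega$ and in $\C^4$ pointwise in $(t,x)$:
\[
|L_3(Z_0)(t,x)|\le 2\,\E(|Y(t,x)|^2)^{1/2}\,\E(|S(t)Z_0(t,x)|^2)^{1/2}.
\]
The first factor is the constant $\|f\|_{L^2}$ by stationarity, as in the computation of $\E|Y_f|^2$. Taking $L^p_tL^q_x$ norms and using Minkowski's inequality (valid since $p,q\ge2$) gives
\[
\|L_3 Z_0\|_{L^p_tL^q_x}\lesssim\|f\|_{L^2}\,\|S(t)Z_0\|_{L^2_\omega L^p_tL^q_x}.
\]
Lemma~\ref{lemma:Str_est} then bounds the right-hand side by $\|Z_0\|_{L^2_\omega\dot H^s_x}$. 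If Minkowski turns out to be the wrong direction, I would instead run a duality/$TT^*$ argument pairing with $U\in L^{p'}_tL^{q'}_x$, using \Cref{prop:TTstar}(2) for $\Theta^*U$ together with the dual Strichartz estimate. All constants are controlled by the norms of $f$ and $w$ in the hypotheses.
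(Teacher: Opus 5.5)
Your proposal is correct and follows the paper's proof essentially step by step. For the first mapping you use $L_1^wW_4=L_1^{w_4}$ with $\an{\,\cdot\,}^4\widehat{w_4}=\an{\,\cdot\,}^8\widehat w\in L^\infty$ and \Cref{prop:est_L1}, which you simply unwind into \Cref{lem:TT*}, Strichartz and Christ--Kiselev. For the second you use Cauchy--Schwarz in $\omega$ with $\|Y\|_{L^\infty_{t,x}L^2_\omega}=\|f\|_{L^2}$ followed by Strichartz; your Minkowski step goes in the right direction since $p,q\geq 2$, so the fallback is unnecessary. For the third you use \Cref{lem:TT*} with the Bessel potential, i.e.\ \Cref{cor:estimateswithSobolev}(1).

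The only slip is in your aside: \Cref{cor:estimateswithSobolev}(2) applied with $w_4$ gives $L_1^{w_4}W_{-4}=L_1^{w}$, not $L_1^wW_4$. That alternative route instead needs the potential with Fourier transform $\an{\eta}^8\widehat w(\eta)$, which is indeed bounded under the hypotheses.
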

\begin{proof} The operator~\(L_1\) depends on~\(w\): we indicate this dependence as~\(L_1^w\).

    \eqref{eq:L1-strichartz-continuity}. We have that~\(L_1^w W_4 = L_1^{w_1}\) with~\(w_1 = \an{\nabla}^4 w\). We have that~\(\an{\xi}^4 \hat w_1(\xi) = \an{\xi}^8 \hat w (\xi) \in L^\infty\). We deduce that~\(L_1^w W_4\) is bounded thanks to~\Cref{prop:est_L1}.

    
    \eqref{eq:L3-strichartz-continuity}. Arguing as before and recalling the Strichartz estimates in~\Cref{lemma:Str_est}, we get 
    \[
    \| L_3(Z_0) \|_{L^p_tL^q_x} \lesssim  \| Y \|_{ L^\infty_{t,x}L^2_\omega} \| Z_0 \|_{L^2_\omega \dot H^s_x}\,. 
    \]
    
    \eqref{eq:L3-continuity}. By~\Cref{lem:TT*}, we have that~\(\Theta^* W_{-4}\) is bounded from~\(L^2_t\dot H^\nu_x\) to~\(L^2_\omega\dot H^{\nu-1}_x\). Therefore,~\(W_{-4} \Theta\) and thus~\(W_{-4} L_3\) is bounded from~\(L^2_\omega\dot H^{\nu-1}_x\) to~\(L^2_t\dot H^\nu_x\).
\end{proof}

\begin{proof}[Proof of~\Cref{th:main}]
We start with the proof in the random field setting. Since~\((1-L_2)^{-1}\) is a Fourier multiplier, by \Cref{stat:reduced-Z-problem} we get
\[
S_Z(Z_0)(t) =S(t) Z_0 + L_1 W_4 (1-L_2)^{-1} W_{-4} L_3(Z_0)(t)\,.
\]
On the one hand, \Cref{lem:boundedness} and \Cref{prop:L2-continuity-of-L2} yields, for any $Z_0\in L^2_\omega \dot H^{s-2}_x$,
\begin{equation*}
    Z_1\coloneqq L_1 W_4 (1-L_2)^{-1} W_{-4} L_3(Z_0)\in L^2_\omega L^p_t L^q_x\,, 
\end{equation*}
with
\begin{equation*}
    \|Z_1\|_{L^2_\omega L^p_t L^q_x}\lesssim \|Z_0\|_{\dot H^{s-2}_x}\,.
\end{equation*}
On the other hand, \Cref{lemma:Str_est} implies, for any $Z_0\in L^2_\omega \dot H^s_x$,
\begin{equation*}
    \|S(t)Z_0\|_{L^2_\omega L^p_t L^q_x}\lesssim\|Z_0\|_{L^2_\omega \dot H^s_x}\,.
\end{equation*}
This concludes the proof of the result in the random field framework.

We now turn our attention to the density matrix setting. 

Notice that, thanks to Proposition \eqref{stat:reduced-Q-problem} we have
\[
\rho [Q(t)] = \rho_1 + L_2(1-L_2)^{-1} \rho_1,
\]
where $\rho_1 := \rho_{S(t)Q_0 S(-t)}$. Then, by Proposition \eqref{prop:L2-continuity-of-L2} and Lemma \eqref{lemma:Str_est} we deduce
\[
\| (1-L_2)^{-1} \rho_1 \|_{L^2_{t,x}} \lesssim \|\an{\nabla}^{1/2} Q_0 \an{\nabla}^{1/2}\|_{\mathfrak S^{4/3}}\, ,
\]
which establishes the result in the density matrix formulation and completes the proof of \Cref{th:main}.
\end{proof}

\section{Proof of the Nonlinear Result}

Recall the notation of~\Cref{subsec:reduction} and in particular that of~\Cref{stat:defQLtilde}. We explicitly write the dependence on~\(w\) of the operators~\(L_1\) and~\(L_2\) as~\(L_1^w,L_2^w\).

Since 
\[
L_2^{w_0} :  W \mapsto 2 \mathrm{Re}\E \left( \an{Y,-i\int_{0}^t S(t-s) [(w_0 * W )Y] \,ds}\right)
\]
is a Fourier multiplier, we have that~\(\widetilde L_2\) coincides with~\(L_2^w\). Under the smallness assumption on the norm of $f$, the operator $1-\widetilde L_2$ is invertible on $L_{t,x}^2$. Therefore, we can rewrite \eqref{eq:fixed-point-CLQ} as
\begin{equation}\label{eq:fixedpointCO2}
\begin{cases}
    Z = S(t) Z_0 + \widetilde L_1 (1- \widetilde L_2)^{-1}(\widetilde L_3(Z_0) + \widetilde Q_2 (Z,V) + L_0\E[|Z|^2]) + \widetilde Q_1 (Z,V)\, ,\\
    V = (1- \widetilde L_2)^{-1}\widetilde L_3(Z_0) + (1- \widetilde L_2)^{-1}\widetilde Q_2(Z,V) + (1- \widetilde L_2)^{-1}L_0\E[|Z|^2]\,.
\end{cases}
\end{equation}

Now, the proof of Theorem~\ref{th:NL} reduces to proving that
\begin{align*}
    \|S(t)Z_0\|_{\Xi} \lesssim & \|Z_0\|_{\Xi_0}\,, \\
    \|\widetilde L_1 (1- \widetilde L_2)^{-1}\widetilde L_3(Z_0)\|_\Xi \lesssim & \|Z_0\|_{\Xi_0}\,, \\
    \|\widetilde L_1 (1- \widetilde L_2)^{-1}\widetilde Q_2 (Z,V)\|_\Xi \lesssim & \|Z\|_\Xi \|V\|_{L^2_{t,x}}\,, \\
    \|\widetilde L_1(1-\widetilde L_2)^{-1}L_0\E[\an{Z,Z'}]\|_{\Xi}\lesssim & \|Z\|_{\Xi} \|Z'\|_{\Xi}\\
    \|\widetilde Q_1 (Z,V)\|_\Xi \lesssim & \|Z\|_\Xi \|V\|_{L^2_{t,x}}\,, \\
    \|(1- \widetilde L_2)^{-1}\widetilde L_3(Z_0)\|_{L^2_{t,x}} \lesssim & \|Z_0\|_{\Xi_0}\,, \\
    \|(1- \widetilde L_2)^{-1}\widetilde Q_2(Z,V)\|_{L^2_{t,x}} \lesssim & \|Z\|_\Xi \|V\|_{L^2_{t,x}}\,, \\
    \|(1- \widetilde L_2)^{-1}L_0 \E[\an{Z,Z'}] \|_{L^2_{t,x}} \lesssim & \|Z\|_\Xi \|Z'\|_\Xi\,,
\end{align*}
where~\(\Xi = L^2_\omega(C(\R, L^2(\R^3)) \cap L^4_{t,x})\) and~\(\Xi_0 = L^2_\omega(\dot H^{-\frac12}_x \cap H^{\frac12}_x)\). These continuity estimates easily follow from the following lemmas.

\begin{lemma}\label{lem:contOp} The following operators are continuous:
\begin{align*}
\widetilde L_1 (1- \widetilde L_2)^{-1} : L^2_t\dot H^{-1/4}_x \rightarrow & \Xi\,, \\
\widetilde L_3 : L^2_\omega \dot H^{-1/2}_x\rightarrow & L^2_t \dot H^{-1/4}_x\,, \\
\widetilde L_3 : L^2_\omega \dot H^{-1/4}_x\rightarrow & L^2_{t,x}\,.
\end{align*}
\end{lemma}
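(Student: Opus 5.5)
We prove the three continuity statements by rewriting each of $\widetilde L_1$, $\widetilde L_2$, $\widetilde L_3$ as an operator already studied in Section 4, with a modified potential. The modified potential absorbs the Fourier multiplier $L_0$, whose symbol is $|\eta|^{3/4}\an{\eta}^{-5}$.

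\textbf{The two estimates on $\widetilde L_3$.} By definition $\widetilde L_3 = L_0 L_3$, and $L_3 = 2\Re(-i\Theta)$ by \Cref{prop:TTstar}. Write $L_0 = |D|^{3/4}\an{D}^{-1} W_{-4}$. Part (4) of \Cref{cor:estimateswithSobolev} (equivalently \eqref{eq:L3-continuity}) says that $W_{-4}L_3$ maps $L^2_\omega\dot H^{\nu-1}_x$ into $L^2_t\dot H^\nu_x$ for every $\nu\in\R$. Since $|D|^{3/4}\an{D}^{-1}$ maps $\dot H^\nu_x$ into $\dot H^{\nu-3/4}_x$, we get that $\widetilde L_3$ is bounded from $L^2_\omega\dot H^{\nu-1}_x$ into $L^2_t\dot H^{\nu-3/4}_x$.
\begin{itemize}
\item Taking $\nu=1/2$ gives $L^2_\omega\dot H^{-1/2}_x\to L^2_t\dot H^{-1/4}_x$, the second assertion.
\item Taking $\nu=3/4$ gives $L^2_\omega\dot H^{-1/4}_x\to L^2_{t,x}$, the third assertion.
\end{itemize}
Taking real parts is harmless, since $\Theta$ and its conjugate satisfy the same bounds: the relevant symbols are related through $\eta\mapsto-\eta$.

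\textbf{The first estimate.} We use that $\widetilde L_2$ is a Fourier multiplier in $x$ that coincides with $L_2^{w}$. Its kernel $F$ lies in $L^1_tL^\infty_\eta$ with small norm (proof of \Cref{prop:L2-continuity-of-L2}), so $(1-\widetilde L_2)^{-1}$ is bounded on $L^2_t\dot H^{\nu}_x$ for every $\nu$, in particular $\nu=-1/4$, by a Neumann series. It therefore suffices to show that $\widetilde L_1$ maps $L^2_t\dot H^{-1/4}_x$ into $\Xi = L^2_\omega(C(\R,L^2_x)\cap L^4_{t,x})$.

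Write $\widetilde L_1 = L_1^{w_0}L_0 = L_1^{w_1}\,|D|^{3/4}\an{D}^{-1}$, with $\widehat w_1 = \widehat w_0\an{\eta}^{-4}$. Then $\an{\cdot}^4\widehat w_1\in L^\infty$, which is exactly the hypothesis of \Cref{prop:est_L1}, while $|D|^{3/4}\an{D}^{-1}$ sends $\dot H^{-1/4}_x$ into $\dot H^{1/2}_x$ with low-frequency gain.
\begin{itemize}
\item \emph{The $L^4_{t,x}$ part.} Here $p=q=4$ and $s=1/2$, so \Cref{prop:est_L1} requires data in $L^2_t\dot H^{s-1}_x = L^2_t\dot H^{-1/2}_x$. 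This is a weaker requirement than what we have: $|D|^{3/4}\an{D}^{-1}V\in\dot H^{-1/2}_x$ whenever $V\in\dot H^{-5/4}_x\cap\dot H^{-1/4}_x\supset$ low/high splitting of $\dot H^{-1/4}_x$. The high frequencies are controlled by $\an{D}^{-1}$, and the low frequencies by $|D|^{3/4}$ together with the $\dot H^{-1/4}$ norm.
\item \emph{The $C(\R,L^2_x)$ part.} We use the energy version of the same $TT^*$ argument: $S(t)\Theta^*(w_1*\cdot)$ is bounded into $L^\infty_tL^2_x$ because, by \Cref{lem:TT*} with $\nu=-1$, the operator $\Theta^*(w_1*\cdot)$ is bounded from $L^2_t\dot H^{-1}_x$ into $L^2_\omega L^2_x$. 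We then pass from $L^\infty$ to $C$ by the Christ--Kiselev lemma, continuity in $t$ following by density.
\end{itemize}

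\textbf{Main obstacle.} The delicate point is the bookkeeping of low versus high frequencies in the $C(\R,L^2_x)$ part. The data must be placed in $\dot H^{-1}_x$ at low frequency, and the factor $|D|^{3/4}$ alone is not enough to move $\dot H^{-1/4}_x$ there. I would first try to resolve this using the full $|\eta|^{3/2}$ structure of $\widehat w$ (rather than only $|\eta|^{3/4}$) inside $L_1^{w_1}$. If that fails, I would fall back on \Cref{lem:TT*} applied with a different $\nu$ at low frequency.
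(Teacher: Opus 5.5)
Your treatment of the two $\widetilde L_3$ bounds and of $(1-\widetilde L_2)^{-1}$ is the paper's argument. Your factorization $\widetilde L_1 = L_1^{w_1}\,|D|^{3/4}\an{D}^{-1}$ with $\widehat w_1=\an{\eta}^{-4}\widehat w_0$ is equivalent to the paper's $\widetilde L_1=L_1^{w_0}\circ L_0$ combined with \Cref{cor:estimateswithSobolev}.

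However, as written you do not prove the first assertion. The $C(\R,L^2_x)$ component of $\Xi$ is left open behind a ``main obstacle'' that does not exist, and that obstacle comes from a Sobolev-index slip. The symbol $|\eta|^{3/4}$ lowers the homogeneous index by $3/4$. So $|D|^{3/4}\an{D}^{-1}$ does not send $\dot H^{-1/4}_x$ into $\dot H^{1/2}_x$: at high frequency, $|\eta|^{1/2}\,|\eta|^{3/4}\an{\eta}^{-1}\sim|\eta|^{1/4}$ is not dominated by $|\eta|^{-1/4}$. On the other hand, it does bring $\dot H^{-1/4}_x$ exactly into $\dot H^{-1}_x$, since $-\frac14-\frac34=-1$. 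You used this very fact in your $\widetilde L_3$ paragraph ($\dot H^\nu_x\to\dot H^{\nu-3/4}_x$, here with $\nu=-\frac14$).

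Concretely, for $V\in L^2_t\dot H^{-1/4}_x$ one has
\[
\||D|^{3/4}\an{D}^{-1}V\|_{\dot H^{-1}_x}=\||\eta|^{-1/4}\an{\eta}^{-1}\widehat V\|_{L^2_\eta}\le\|V\|_{\dot H^{-1/4}_x}\,,\qquad \||D|^{3/4}\an{D}^{-1}V\|_{\dot H^{-1/2}_x}=\||\eta|^{1/4}\an{\eta}^{-1}\widehat V\|_{L^2_\eta}\le\|V\|_{\dot H^{-1/4}_x}\,,
\]
where the second inequality holds because $|\eta|^{1/2}\le\an{\eta}$. Hence \Cref{prop:est_L1} applied to $L_1^{w_1}$ with $(p,q,s)=(\infty,2,0)$ and $(4,4,\frac12)$ gives both components of $\Xi$ at once. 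Your own \Cref{lem:TT*}-plus-Christ--Kiselev argument with $\nu=-1$ works equally well for the first one. This is precisely the paper's observation that $L_0$ maps $\dot H^{-1/4}_x$ into $\dot H^{-1}_x\cap H^4_x\subset\dot H^{-1/2}_x$.

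No further use of the $|\eta|^{3/2}$ structure of $\widehat w$ is needed. Inside $\widetilde L_1$ only the single factor $L_0$ appears, and it already suffices, so your proposed fallbacks are beside the point. Your justification of the $L^4_{t,x}$ part (via ``$\dot H^{-5/4}_x\cap\dot H^{-1/4}_x$'' and a low/high splitting) is garbled and should be replaced by the second inequality above.
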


\begin{proof} Notice that
\[
\widetilde L_1 = L_1^{w_0} \circ L_0\,,
\]
with
\[
L_1^{w_0}(V) = -i \int_0^t S(t-s) (w_0* V(s)) Y(s) \,ds\,.
\]
Since~\( L_1 ^{w_0}\) is bounded from~\(L^2_t(\dot H^{-1}_x\cap H^{4}_x)\) to~\(\Xi\) (by Corollary~\ref{cor:estimateswithSobolev}) and~\(L_0\) is bounded from~\(L^2_t \dot H^{-\frac14}_x\) to~\(L^2_t(\dot H^{-1}_x\cap H^{4}_x)\) (by definition of~\(L_0\)), we deduce that~\(\widetilde L_1\) is continuous from~\(L^2_t \dot H^{-\frac14}_x\) to~\(\Xi\). By \Cref{prop:L2-continuity-of-L2}, operator~\((1- \widetilde L_2)^{-1}\) is a Fourier multiplier bounded from~\(L^2_{t,x}\) to itself and hence it is also bounded from~\(L^2_t \dot H^{-\frac14}_x\) to itself. Thus,
\[
\widetilde L_1 (1-\widetilde L_2)^{-1} : L^2_t \dot H^{-\frac14}_x\rightarrow \Xi
\]
is a continuous operator.

Analogously, we also have that~\(\widetilde L_3 = L_0 \circ L_3\). Since~\(L_3\) is a bounded operator from~\(L^2_\omega \dot H^{\nu-1}_x\) to~\(L^2(\R, \dot H^{\nu}_{-4,x})\) (by Corollary~\ref{cor:estimateswithSobolev}) and~\(L_0\) is bounded from~\(\dot H^{\nu}_{-4,x}\) to~\(\dot H^{\nu-\frac34}_x\) (by definition of~\(L_0\)), we get that
\[
\widetilde L_3 : L^2_\omega \dot H^{\nu-1}_x \rightarrow L^2_t \dot H^{\nu-\frac34}_x 
\]
is a continuous operator. Taking~\(\nu = \frac12\) and~\(\nu = \frac34\), we obtain the desired results.
\end{proof}

Along with the Strichartz estimates in \Cref{lemma:Str_est} and the fact that the bilinear map, defined as
\begin{align*}
L^2_\omega L^4_{t,x} \times L^2_\omega L^4_{t,x} &\longrightarrow L^2_{t,x} \\
(Z,Z') &\longmapsto \E(\an{Z,Z'}))\,,
\end{align*}
is continuous, the previous lemma reduces the proof of~\Cref{th:NL} to proving the following lemma.

\begin{lemma}\label{lem:contbilinear} We have
\begin{align}\label{eq:Q1}
\|\widetilde Q_1 (Z,V)\|_\Xi \lesssim & \|Z\|_\Xi \|V\|_{L^2_{t,x}}\,, \\ \label{eq:Q21}
\|\widetilde Q_2 (Z,V)\|_{L^2_{t,x}} \lesssim & \|Z\|_\Xi \|V\|_{L^2_{t,x}}\,, \\ \label{eq:Q22}
\|\widetilde Q_2 (Z,V)\|_{L^2_t \dot H^{-\frac14}_x} \lesssim & \|Z\|_\Xi \|V\|_{L^2_{t,x}}\,.
\end{align}
\end{lemma}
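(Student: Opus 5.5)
The idea is to exploit the frequency cut-off $\Pi$, which removes the high-frequency loss of derivatives, together with the smoothing multiplier $L_0$, whose symbol $|\eta|^{3/4}\an{\eta}^{-5}$ is bounded and decays at high frequencies. We then treat $\widetilde Q_1$ with the inhomogeneous Strichartz estimates of \Cref{lemma:Str_est}, and $\widetilde Q_2$ by duality through $\Theta$ and \Cref{cor:estimateswithSobolev}.

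\textbf{Estimate for $\widetilde Q_1$.} First note that, since $\Pi$ commutes with $S(t)$, one may write $\widetilde Q_1(Z,V)=\Pi\widetilde Q_1(Z,V)+(1-\Pi)\widetilde Q_1(Z,V)$. The more convenient route is to place $\Pi$ on $Z$ and control the forcing term $F=(w_0*L_0V)\,\Pi Z$ in a dual Strichartz space.
\begin{itemize}
\item For the $L^4_{t,x}$ component, apply the inhomogeneous estimate of \Cref{lemma:Str_est} with $p=q=4$, so that $s=\tfrac12$. This requires $F\in L^2_\omega L^{4/3}_t\dot H^{1}_{4/3,x}$.
\item For the $C_tL^2_x$ component, use the energy estimate, which requires $F\in L^2_\omega L^1_tL^2_x$. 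Since $V$ is only in $L^2_t$, use the dual endpoint instead: $L^2_\omega L^{4/3}_t W^{1/2,4/3}_x$, by duality with the $L^4$ Strichartz estimate.
\end{itemize}
The derivatives falling on $\Pi Z$ cost at most $\Lambda$, by Bernstein's inequality. Those falling on $w_0*L_0V$ are absorbed by $\an{\eta}^{-5}$. By Hölder's inequality,
\[
\|F\|_{L^{4/3}_{t,x}}\le \|w_0*L_0V\|_{L^2_tL^4_x}\,\|\Pi Z\|_{L^4_{t,x}},
\]
and Sobolev embedding gives $\|L_0V\|_{L^2_tW^{1,4}_x}\lesssim\|V\|_{L^2_{t,x}}$, because $|\eta|^{3/4}\an{\eta}^{-5}$ is bounded and provides more than $3/4+1$ derivatives. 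A frequency localisation of the product handles the low-frequency Bernstein issue: the output frequency is at most $\Lambda$ plus the frequency of $V$, and $\an{\eta}^{-5}$ controls the latter. Taking $L^2_\omega$ norms then yields \eqref{eq:Q1}.

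\textbf{Estimates for $\widetilde Q_2$.} Here
\[
\widetilde Q_2(Z,V)=L_0\,2\Re\E\an{Y,\widetilde Q_1(Z,V)} .
\]
Write $\E\an{Y(t),\int_0^tS(t-s)G(s)\,ds}$ with $G=(w_0*L_0V)\Pi Z$. By the Christ--Kiselev lemma, replace $\int_0^t$ by $\int_\R$; the resulting expression equals $-i\,\Theta$ applied to $\int_\R S(-s)G(s)\,ds$, up to constants. \Cref{cor:estimateswithSobolev}(1) shows that $W_{-4}\Theta$ maps $L^2_\omega\dot H^{\nu-1}_x$ to $L^2_t\dot H^\nu_x$. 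Combined with $L_0=L_0W_4\,W_{-4}$, where $L_0W_4$ has symbol $|\eta|^{3/4}\an{\eta}^{-1}$, this gives
\[
\|\widetilde Q_2\|_{L^2_t\dot H^{\nu-3/4}}\lesssim\Big\|\int_\R S(-s)G(s)\,ds\Big\|_{L^2_\omega \dot H^{\nu-1}_x}.
\]
Choosing $\nu=3/4$ for \eqref{eq:Q21} and $\nu=1/2$ for \eqref{eq:Q22} requires bounding $\int_\R S(-s)G\,ds$ in $L^2_\omega(\dot H^{-1/4}\cap\dot H^{-1/2})_x$. This follows from the dual homogeneous Strichartz estimate, i.e. $TT^*$ applied to \Cref{lemma:Str_est}, which gives
\[
\Big\|\int S(-s)G\Big\|_{\dot H^{-1/2}}\lesssim\|G\|_{L^{4/3}_{t,x}},
\]
since $s=\tfrac12$. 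The $\dot H^{-1/4}$ bound comes from interpolating this with an $L^2_x$ bound obtained using the cut-off, for which $G\in L^{4/3}_tW^{1/2,4/3}_x$. The bilinear Hölder bound from the first step then closes the argument.

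\textbf{Main obstacle.} The delicate point is the low-frequency behaviour. The $\dot H^{-1/2}$ norm and the Christ--Kiselev step are harmless, but we must check that the homogeneous Sobolev weights are compatible with the factor $|\eta|^{3/4}$ carried by $L_0$ and with the $\dot H^{\nu}$ scale in \Cref{cor:estimateswithSobolev}. If the pure Hölder route turns out to lose at low frequencies, I would first try splitting $V$ into low and high frequencies and using the vanishing of $|\eta|^{3/4}$ at $\eta=0$ on the low-frequency part.
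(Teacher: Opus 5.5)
For $\widetilde Q_1$ your argument is the paper's: dual Strichartz into $L^2_\omega L^{4/3}_tW^{1,4/3}_x$, then Leibniz and H\"older. The H\"older step as you wrote it is false, though. With $\Pi Z\in L^4_x$, landing in $L^{4/3}_x$ requires the potential in $L^2_x$, not $L^4_x$: two $L^4_x$ factors only give $L^2_x$, which is not contained in $L^{4/3}_x$ on $\R^3$. The correct bound is
\[
\|(w_0*L_0V)\,\Pi Z\|_{L^{4/3}_tW^{1,4/3}_x}\lesssim\|w_0*L_0V\|_{L^2_tH^1_x}\,\|\Pi Z\|_{L^4_tW^{1,4}_x},
\]
and $\|w_0*L_0V\|_{L^2_tH^1_x}\lesssim\|V\|_{L^2_{t,x}}$ follows from the symbol without any Sobolev embedding. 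There is also no low-frequency issue at this step, since $W^{1,4/3}_x$ controls $\dot W^{\sigma,4/3}_x$ for $0\le\sigma\le 1$.

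For $\widetilde Q_2$ you take the transposed route, which is valid and uses the same ingredients in adjoint form. The paper pairs $\widetilde Q_2(Z,V)$ with a real $W\in L^2_{t,x}+L^2_t\dot H^{1/4}_x$ and moves $Y$ and $L_0$ onto $W$ through the anti-causal operator $L_1'$. It then uses that $L_1'\circ L_0$ maps this sum space into $L^2_\omega L^4_{t,x}$, so \eqref{eq:Q21} and \eqref{eq:Q22} both reduce at once to $\|(w_0*L_0V)\Pi Z\|_{L^2_\omega L^{4/3}_{t,x}}$.

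You instead remove the time truncation by Christ--Kiselev, going from $L^{4/3}_t$ to $L^2_t$. This is legitimate once the $L^2_\omega$ norm is moved inside by Minkowski, which is needed because $\E$ couples the $\omega$'s. You then write the full-line term as $L_0\Re\Theta(U_0)$ with $U_0=\int_\R S(-s)G(s)\,ds$, and combine \Cref{cor:estimateswithSobolev}(1) with the dual homogeneous Strichartz estimate.

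Your choice $\nu=3/4$ for \eqref{eq:Q21}, however, uses only the factor $|\eta|^{3/4}$ in the symbol $|\eta|^{3/4}\an{\eta}^{-1}$ of $L_0W_4$. That is what forces your interpolation with an $L^2_x$ bound, which needs derivatives on $G$. Since $|\eta|^{3/4}\an{\eta}^{-1}\lesssim|\eta|^{1/2}$, the operator $L_0W_4$ maps $\dot H^{1/2}_x$ into $L^2_x\cap\dot H^{-1/4}_x$. So $\nu=1/2$ gives both estimates from $\|U_0\|_{L^2_\omega\dot H^{-1/2}_x}\lesssim\|G\|_{L^2_\omega L^{4/3}_{t,x}}$ alone, exactly matching what the paper's duality argument requires.

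There is also a caveat that you share with the paper. Bernstein gives $\|\nabla\Pi Z\|_{L^4_x}\lesssim\Lambda\|\Pi Z\|_{L^4_x}$, but closing the estimate also needs $\|\Pi Z\|_{L^4_x}\lesssim\|Z\|_{L^4_x}$. The Fourier multiplier by the indicator of a ball is unbounded on $L^4(\R^3)$, by Fefferman's ball multiplier theorem. Both arguments go through if $\Pi$ is replaced by a smooth cut-off $\chi(D/\Lambda)$ equal to $1$ on the support of $f$.
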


\begin{proof} We start with~\eqref{eq:Q1}. By using the dual Strichartz estimates~in \Cref{lemma:Str_est}, we get
\[
\|\widetilde Q_1 (Z,V)\|_\Xi \lesssim \| w_0*(L_0 V) \Pi Z\|_{L^2_\omega L^{\frac43}_t W^{1,\frac43}_x}\,.
\]
Then, we use the Leibniz rule to get
\[
\|\widetilde Q_1 (Z,V)\|_\Xi \lesssim \| w_0*(L_0 V)\|_{L^2_t H^1_x} \|\Pi Z\|_{L^2_\omega L^{4}_t W^{1,4}_x}\,.
\]
Since~\(V\mapsto w_0*(L_0 V)\) is bounded from~\(L^2_x\) to~\(H^1_x\) and~\(Z\mapsto \Pi Z\) is bounded from~\(L^4_x\) to~\(W^{1,4}_x\), we obtain
\[
\|\widetilde Q_1 (Z,V)\|_\Xi \lesssim \| V\|_{L^2_{t,x}} \| Z\|_{L^2_\omega L^{4}_{t,x}}\,,
\]
which proves \eqref{eq:Q1}.

We now focus on~\eqref{eq:Q21} and~\eqref{eq:Q22}. Take~\(W \in L^2_{t,x} + L^2_t\dot H^{\frac14}_x\) to be real-valued. We have 
\[
\an{W, \widetilde Q_2(Z,V)}_{t,x} = 2 \mathrm{Re}\an{ \int_{s}^\infty S(s-t) Y(t) L_0 W(t)\, dt , w_0*(L_0 V)\Pi Z }_{s,x,\omega}\,.
\]
Similarly to~\(L_1^{w_0}\), the operator $L_1'$ defined by
\[
L_1' : W \mapsto \int_{s}^\infty S(s-t) Y(t) W(t)\, dt
\]
is continuous from~\(L^2(\R,\dot H^{-\frac12}_x \cap H^{4}_x)\) into~\(L^2_\omega L^4_{t,x}\). We deduce that~\(L_1'\circ L_0\) is continuous from~\(L^2_{t,x} + L^2_t \dot H^{\frac14}_x\) to~\(L^2_\omega L^4_{t,x}\). Thus,
\[
\an{W, \widetilde Q_2(Z,V)}_{t,x} \lesssim \|W\|_{L^2_{t,x} + L^2_t \dot H^{\frac14}_x} \|w_0*(L_0 V)\Pi Z \|_{L^2_\omega L^{\frac43}_{t,x}}\,,
\]
which allows us to conclude the proof as done for~\(\widetilde Q_1\).
\end{proof}

\begin{proof}[Proof of~\Cref{th:NL}]
Recall that under the smallness assumption on the norm of $f$, the operator $1-\widetilde L_2$ is invertible on $L_{t,x}^2$. Thus, the fixed-point equation \eqref{eq:fixed-point-CLQ} can be rewritten as \eqref{eq:fixedpointCO2}. We first solve it and then prove the scattering result.\par
Defining the mapping $\Phi_{Z_0}:\Xi \times L^2_{t,x} \longrightarrow \Xi \times L^2_{t,x}$ such that
\begin{equation*}
    \begin{pmatrix}
    Z \\ V
\end{pmatrix} \longmapsto \begin{pmatrix}
    S(t) Z_0 + \widetilde L_1 (1- \widetilde L_2)^{-1}(\widetilde L_3(Z_0) + \widetilde Q_2 (Z,V)+L_0\E[|Z|^2]) + \widetilde Q_1 (Z,V) \\ (1- \widetilde L_2)^{-1}\widetilde L_3(Z_0) + (1- \widetilde L_2)^{-1}\widetilde Q_2(Z,V) + (1- \widetilde L_2)^{-1}L_0\E(|Z|^2)
\end{pmatrix}\,,
\end{equation*}
the fixed-point equation \eqref{eq:fixedpointCO2} can be rewritten as
\begin{equation*}
    \begin{pmatrix}
    Z \\ V
\end{pmatrix}=\Phi_{Z_0}\begin{pmatrix}
    Z \\ V
\end{pmatrix}\,.
\end{equation*}
For a suitable constant $R>0$, one can easily prove that the mapping $\Phi_{Z_0}$ is a contraction on $B_{\Xi\times L^2_{t,x}}\left(0,R\|Z_0\|_{\Xi_0}\right)$ by means of Lemmas~\ref{lem:contOp}~and~\ref{lem:contbilinear}, provided that $Z_0\in\Xi_0$ has a sufficiently small norm $\|Z_0\|_{\Xi_0}$. Finally, the Banach fixed-point theorem yields the existence and uniqueness of a solution to~\eqref{eq:fixedpointCO2} in $B_{\Xi\times L^2_{t,x}}\left(0,R\|Z_0\|_{\Xi_0}\right)$ and therefore of a solution~\(X \in Y_f + L^2_{\omega}(C(\R,L^2_x(\R^3))\cap L^4_{t,x})\)  to \eqref{eq:cutoff} such that $X\big|_{t=0}=Y_f+Z_0$.\par

To prove the scattering result, we observe that, recalling \eqref{eq:Duhamel_Z}, we can write
\begin{multline*}
    Z(t) = S(t) \bigg ( Z_0 - i \int_0^\infty S(-\tau) \big [w_0 \ast L_0 V (\tau) \big ] (Y(\tau)+\Pi Z(\tau))\, d\tau\, \\
     + i \int_t^\infty S(-\tau) \big [w_0 \ast L_0 V (\tau) \big ] (Y(\tau)+\Pi Z(\tau))\, d\tau\, \bigg )\,.
\end{multline*}
By Lemma \eqref{lem:contOp} we get that $\int_0^\infty S(-\tau) \big [w_0 \ast L_0 V (\tau) \big ] Y(\tau)\, d\tau\, \in L^2_{\omega,x}$ and that 
\[
\bigg \Vert  \int_t^\infty S(-\tau) \big [w_0 \ast L_0 V (\tau) \big ] Y(\tau)\, d\tau\, \bigg \Vert_{L^2_{\omega,x}} \to 0 \,,
\]
as $t \to + \infty$. Moreover, Lemma \eqref{lem:contbilinear} implies $\int_0^\infty S(-\tau) \big [w_0 \ast L_0 V (\tau) \big ] \Pi Z(\tau))\, d\tau\, \in L^2_{\omega,x}$ and 
\[
\bigg \Vert  \int_t^\infty S(-\tau) \big [w_0 \ast L_0 V (\tau) \big ] \Pi Z(\tau))\, d\tau\, \bigg \Vert_{L^2_{\omega,x}} \to 0\,,
\]
as $t \to + \infty$. This concludes the proof of \Cref{th:NL}.

\end{proof}

\section*{Acknowledgements}
E.~D. acknowledges that this study was carried out within the INdAM – GNAMPA Project CUP E53C25002010001. A.-S.~dS is supported by Soutien Recherche et Attractivité – PR et DR 2026 from Université Paris-Saclay and by ANR No-Limit ANR-25-CE40-1380. U.~M. acknowledges financial support from MUR – Italian Ministry of University and Research and Next Generation EU within PRIN 2022AKRC5P `Interacting Quantum Systems: Topological Phenomena and Effective Theories'. U.~M. also acknowledges financial support from the European Union through the European Research Council’s Starting Grant \textsc{FermiMath}, grant agreement n. 101040991. Views and opinions expressed are those of the authors and do not necessarily reflect those of the European Union or the European Research Council Executive Agency. Neither the European Union nor the granting authority can be held responsible for them. U.~M. has also been partially supported by \emph{Gruppo Nazionale per la Fisica Matematica} GNFM – INdAM.

\bibliographystyle{siam} 
\small{\bibliography{bibliography}}

\end{document}